\newtheorem*{thmA}{Theorem~A}
\newtheorem*{thmB}{Theorem~B}
\newtheorem*{thmC}{Theorem~C}
\newtheorem*{CorD}{Corollary~D}
\newtheorem{theorem}{Theorem}[section]
\newtheorem{corollary}[theorem]{Corollary}
\newtheorem{lemma}[theorem]{Lemma}
\newtheorem{definition}[theorem]{Definition}
\newtheorem{proposition}[theorem]{Proposition}
\newtheorem{remark}[theorem]{Remark}
\newtheorem{example}[theorem]{Example}
\newtheorem{qst}[theorem]{Question}
\newtheorem*{definition*}{Definition}
\def\A{\mathbb{A}}
\def\C{\mathbb{C}}
\def\H{{\mathbb{H}}}
\def\R{\mathbb{R}}
\def\P{\mathbb{P}}
\def\N{\mathbb{N}}
\def\CP2{{\mathbb{CP}^2}}
\def\cA{{\mathcal{A}}}
\def\cF{{\mathcal{F}}}
\def\cH{{\mathcal{H}}}
\def\cO{{\mathcal{O}}}
\def\cT{{\mathcal{T}}}
\def\e{\varepsilon}
\def\aoneank{\A^{1,an}_k}
\def\poneank{\P^{1,an}_k}
\def\diam{\operatorname{diam}}
\def\logdiam{\operatorname{\log diam}}
\def\interior{\operatorname{int}}
\def\id{\operatorname{id}}
\def\unsepdeg{\operatorname{deg}^{un}}
\newcommand{\ov}{\overline}
\def\car{\operatorname{char}}
\def\val{\operatorname{val}}
\def\gal{\operatorname{Gal}}
\begin{document}
\title{A non-archimedean Montel's theorem}

\author{Charles Favre}
\address{
Centre de Math\'ematiques Laurent Schwartz
\'Ecole Polytechnique
91128 Palaiseau Cedex France 
}
\email{favre@math.polytechnique.fr}

\author{Jan Kiwi}
\address{Facultad de Matem\'aticas,
 Pontificia Universidad Cat\'olica de Chile,
 Casilla 306, Correo 22, Santiago,
 Chile.}
\email{jkiwi@puc.cl}

\author {Eugenio Trucco}
\address{Instituto de Ciencias F\'isicas y Matem\'aticas, Facultad de Ciencias, Universidad Austral de Chile, Casilla 567, Valdivia, Chile.}
\email{etrucco@uach.cl}

\begin{abstract}
We prove a version of Montel's theorem for analytic functions over a non-archime\-dean complete valued field.  We propose a definition of normal family in this context, and  give applications of our results to the dynamics of non-archimedean entire functions.
\end{abstract}

\date{\today}
\keywords{Normal family, Berkovich spaces, non-archimedean analysis}
\subjclass{32P05, 37P50, 30D45}

\thanks{Supported by the following projects: ECOS C07E01, ANR-Berko, Research Network on Low Dimensional Dynamics ACT-17, Conicyt, Chile,  and MathAMSUD-DySET}

\maketitle

\setcounter{tocdepth}{3}
\tableofcontents

\section*{Introduction}

Montel's theorem states that any family of holomorphic maps with values in $\P^1(\C) \setminus \{  0, 1, \infty\}$ is a normal family. In particular,  one can extract subsequences 
that converge in the 
topology of the uniform convergence on compact subsets.
This result was proven at the beginning of the 20th century and soon became a landmark in complex analysis in one variable. Shortly after its publication, it was used by Fatou and Julia to set the foundations of  complex dynamics.  We refer to the survey of Zalcman~\cite{zalcman:survey} for interesting results related to normal families and applications.  
   
 Our goal is to prove a Montel's type theorem in the context of non-archimedean analysis.
More specifically we fix a complete (non trivially) valued field $(k, |\cdot|)$, and 
consider maps between open subsets of the projective line $\poneank$ over $k$ in the sense of Berkovich. The first observation is that  the obvious generalization of Montel's theorem is not  true over a non-archimedean field. In fact, any sequence of constant functions $\zeta_n\in k$ such that $|\zeta_n| = 1$ and all residues classes are distinct admits no  subsequence converging to an analytic function.

On the other hand, Hsia~\cite{hsia:montel} (see also~\cite{hu-yang}) obtained a version of Montel's theorem in the case the source space is a ball, and the target space is $\P^1_k \setminus \{  0, \infty\}$. 
Two remarks are in order about this result. First, the conclusion is that a suitable family of analytic functions is  equicontinuous.
But this does not imply the existence of convergent subsequences.
Second, the assumption on the source space is a very strong one over a non-archimedean field. For instance, Hsia's theorem fails on annuli. 
Our main theorems are attempts to remedy  these issues.

Let us mention immediately that our results rely in a crucial manner on the sequential compactness of the closed unit ball 
in the affine spaces $\mathbb{A}^{N,\mathrm{an}}_k$ for \emph{all} dimensions $N\ge1$. It was proved by the first author~\cite{favre:sequential} for some specific classes of non-archimedean fields, and by J.~Poineau~\cite{poineau:sequential} in full generality.
\begin{thmA}
Suppose $k$ is a non-archimedean complete non-trivially valued field.
Let $X$ be any  connected open subset of $\poneank$, and let $f_n : X \to 
\poneank \setminus \{0, \infty \}$ be a sequence of analytic maps.
Then there exists a subsequence $\{ f_{n_j} \}$ which converges pointwise to a map $f: X \to \poneank$. 
\end{thmA}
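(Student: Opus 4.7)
The plan is to combine Poineau's sequential compactness of closed balls in $\mathbb{A}^{N,\mathrm{an}}_k$ (which in particular yields sequential compactness of $\poneank$) with a normal-form description of invertible analytic functions on affinoid pieces of $X$. The basic idea is that avoiding $0$ and $\infty$ means each $f_n$ is a unit in $\cO(Y)$ on each affinoid $Y \subset X$, so its structure splits into a bounded ``unipotent'' part plus finitely many ``free'' parameters (a constant in $k^{\times}$ and integer winding numbers), each of which can be extracted separately.

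Concretely, I would first exhaust $X$ by an increasing sequence of connected compact affinoid domains $Y_m \subset X$ and fix a countable dense subset $D \subset X$. For each $x \in D$, the sequence $f_n(x) \in \poneank$ lies in a sequentially compact space, so a standard diagonal argument produces a subsequence $\{f_{n_j}\}$ along which $f_{n_j}(x)$ converges in $\poneank$ for every $x \in D$. By itself this is far from enough; one must upgrade pointwise convergence on $D$ to pointwise convergence on all of $X$.

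To do this, I would exploit the structure of $\cO(Y_m)^\times$: a connected affinoid in $\poneank$ is a closed disc with finitely many open discs removed, so every non-vanishing analytic function on $Y_m$ admits a factorisation
\[
f_n|_{Y_m} \;=\; c_n \, \prod_{i=1}^{N_m} (z-a_i)^{d_{n,i}}\,(1+u_n),
\]
with $c_n \in k^{\times}$, integers $d_{n,i} \in \Z$, points $a_i$ in the removed discs, and $\|u_n\|_{Y_m} \le r_m < 1$. Applying Poineau's compactness to each Taylor coefficient of $u_n$ (each of which sits in a fixed closed ball of $k$), pigeonholing the integers $d_{n,i}$ (either they stabilise, or else the corresponding factor forces $|f_n|\to 0$ or $\infty$ on a definite region, which is still a legitimate limit in $\poneank$), and extracting $c_n$ inside $\poneank$, a further diagonal argument along $m$ yields a single subsequence along which every piece of this data converges. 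Continuity of multiplication on $\poneank$ (where defined) then implies that $f_{n_j}(x)$ converges in $\poneank$ for every $x \in Y_m$, not merely for $x \in D \cap Y_m$, and the limits are compatible with those already extracted on the dense set.

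The main obstacle I expect is conceptual rather than technical: the extracted limits of the Taylor coefficients of $u_n$ live in $\aoneank$ and generally not in $k$, so the limit object is not an analytic function over $k$ but only a map $f \colon X \to \poneank$; one must resist the temptation to treat it as analytic and instead argue purely at the level of pointwise values. A secondary difficulty is verifying coherence of the local limits across overlaps $Y_m \cap Y_{m'}$ and their compatibility with the limits on $D$, but this is forced by uniqueness of limits at classical points, which are dense in each $Y_m$. Once these points are settled, the subsequence $\{f_{n_j}\}$ defines the desired map $f$.
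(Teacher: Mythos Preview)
Your factorisation-and-extract strategy is close in spirit to the paper's approach when the exponents $d_{n,i}$ stay bounded: the paper uses Mittag--Leffler truncation on each affinoid to reduce to rational functions of bounded degree, then applies Poineau's sequential compactness to the finite coefficient vector (Proposition~\ref{prop:rational}), which is essentially what you are doing with the $(1+u_n)$ factor once one notes that high-order terms are uniformly negligible.

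The genuine gap is the unbounded-degree case. Your dichotomy ``either the $d_{n,i}$ stabilise, or the corresponding factor forces $|f_n|\to 0$ or $\infty$ on a definite region'' is not correct. Take $Y$ the closed annulus $\{\tfrac12 \le |z| \le 1\}$ and $f_n(z)=z^n$: here $c_n=1$, the single exponent $d_n=n\to\infty$, yet on the whole set $\{|z|=1\}$ (which contains the Gauss point and uncountably many points of every type) one has $|f_n|\equiv 1$, so neither alternative holds. Knowing only $|f_n|$ gives no control over convergence in $\poneank$; for a rigid $\zeta$ with $|\zeta|=1$ the sequence $\zeta^n$ typically has no limit without further extraction, and the extraction that works depends on $\zeta$, so a naive diagonal over a countable dense set does not suffice. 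The paper handles exactly this situation through Lemmas~\ref{lem:6} and~\ref{lem:7}: the key new idea is to look at the tangent maps $T_{x_g}f_n$, observe that since their degrees tend to infinity they are eventually pairwise distinct, and deduce that for all but countably many directions $D$ at $x_g$ the image balls $f_n(D)$ are pairwise disjoint, forcing $f_n\to x_g$ on each such $D$; on the remaining countably many directions one falls back on the bounded-range result (Theorem~\ref{thm:extract-affinoid}) via a diagonal argument. Without this mechanism, or some substitute for it, your sketch does not produce a single subsequence converging at every point of $X$.
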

In fact this result is also true for any affinoid domain, but we stick to open sets for simplicity.
 
Of course there is a price to pay for this statement to be true. In general,   $f$ needs not be
analytic, nor even continuous, see Section~\ref{sec:example} for some examples. Our next result gives some basic information about  the pointwise limit of analytic maps avoiding \emph{three} points. Observe that in the previous theorem $f_n(X)$ is only assumed to avoid two points, just like in Hsia's version of Montel's theorem.

Recall that the local degree $\deg_x f$ of an analytic map at a rigid point $x\in k$
is the ramification degree of $f$ at $x$. This function extends in a natural way to the Berkovich
space, see~\cite{baker-rumely:book,faber:ramification,favre-letelier:equi}.
For a type II point\footnote{see Section \ref{sec:basics} for a formal definition} $x\in \poneank$, the local action of $f$ is encoded in
a degree $\deg_x f$ rational map $T_x f$ 
acting on the projective line over the residual field $\tilde{k}$. 
In order to state our next result we introduce the notion of unseparable degree $\unsepdeg_x(f)$ at a type II point $x$.
If the characteristic of $\tilde{k}$ is $p >0$, we have that $T_x f (z) = R(z^{p^n})$ for some separable rational map
$R$ and some $n \ge 0$. In this case we say that $\unsepdeg_x (f) = p^n$.
If the characteristic of $\tilde{k}$ is zero, then we set $\unsepdeg_x (f)=1$.

We let $\H$ be the complement of rigid points in $\poneank$.
\begin{thmB}
Let $X$ be any  connected open subset of $\poneank$. Let $f_n : X \to 
\poneank \setminus \{0, 1,\infty \}$ be a sequence of analytic maps converging pointwise to a map $f: X \to \poneank$. Suppose that for any type II point $x\in X$, the sequence $\unsepdeg_x f_n$ is bounded.

Then  the map $f$ is continuous. Moreover it is either  constant or  it maps non rigid points to $\H$ and $f(X) \subset \poneank \setminus \{0, 1, \infty \}$.
\end{thmB}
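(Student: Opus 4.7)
The plan is to establish continuity of $f$ on $X$ via Hsia's theorem and a Berkovich-ball cover, and then to prove the dichotomy (constant vs.\ good map) by combining a local Schwarz-type estimate with a clopen/connectedness argument; the unseparable-degree hypothesis plays its crucial role in the delicate step at type II points.

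For continuity, any point $x_0 \in X$ lies in some open Berkovich ball $B \subset X$, since such balls form a basis of the topology on $\poneank$. On $B$, the family $\{f_n|_B\}$ takes values in $\poneank \setminus \{0,\infty\}$, so Hsia's theorem ensures it is equicontinuous. Equicontinuity combined with pointwise convergence $f_n \to f$ yields continuity of $f$ on $B$ by a standard Arzel\`a--Ascoli-type argument in the setting of uniform spaces; since $X$ is covered by such balls and continuity is local, $f$ is continuous on $X$.

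For the dichotomy, assume $f$ is non-constant. I first rule out $f(x_0) \in \{0,1,\infty\}$: if $f(x_0) = 0$ (the cases $1,\infty$ being analogous), a non-archimedean Schwarz-type lemma---that analytic maps $g : B \to \poneank \setminus \{0,1,\infty\}$ with $g(x_0)$ close to $0$ must be uniformly close to $0$ on a smaller ball, with a universal control---applied to the $f_n$ gives $f_n \to 0$ uniformly on some neighborhood $U \ni x_0$, whence $f \equiv 0$ on $U$. The set $\{x : f(x) = 0\}$ is then clopen (closed by continuity of $f$, open by the preceding argument) and nonempty, hence equal to $X$ by connectedness, contradicting non-constancy.

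Next I show non-constant $f$ maps non-rigid points to $\H$. Suppose for contradiction $x_0 \in X \cap \H$ with $f(x_0) = c \in k \cup \{\infty\}$ (necessarily $c \notin \{0,1,\infty\}$ by the previous step). Reducing to the case where $x_0$ is of type II, I analyze the tangent maps $T_{x_0} f_n : \P^1(\tilde k) \to \P^1(\tilde k)$, which are rational of degree $\deg_{x_0} f_n$. As $y_n := f_n(x_0) \to c$ rigid, the hyperbolic radius of $y_n$ tends to infinity, and the bound on $\unsepdeg_{x_0} f_n$ controls the separable/inseparable structure of the tangent action; a refined Schwarz-type argument then yields $f_n \to c$ uniformly on a neighborhood $U$ of $x_0$, so $f \equiv c$ on $U$, and the clopen/connectedness argument gives $f \equiv c$ globally---a contradiction. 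The main obstacle is precisely this last Schwarz-type estimate in the non-rigid setting: without the unseparable-degree bound, examples such as $f_n(z) = z^{p^n}$ in positive residue characteristic produce pointwise limits that mix tangent directions pathologically, so the interplay between bounded unseparable degree, 3-point avoidance, and the degeneration $y_n \to c$ must be tracked carefully at the level of tangent maps over $\tilde k$.
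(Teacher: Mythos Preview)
Your continuity argument has two concrete gaps. First, open Berkovich balls do \emph{not} form a basis of the topology on $\poneank$: a type II or III point lying on the skeleton of $X$ (for instance any point on the segment joining the two boundary points of an open annulus $X$) has no open-ball neighborhood contained in $X$, so you cannot reduce to the ball case. Second, Hsia's theorem yields equicontinuity only at \emph{rigid} points for the chordal metric on $\P^1(k)$; this says nothing about continuity of the limit at non-rigid Berkovich points, and an Arzel\`a--Ascoli argument in a uniform-space setting does not bridge that gap. In fact continuity genuinely requires the unseparable-degree hypothesis: Section~\ref{sec:NotC0} exhibits, in characteristic $p$, a sequence $f_n = z^{p^n} + \e_n z^{p^n+1}$ on an affinoid avoiding $\{0,1,\infty\}$ whose pointwise limits are never continuous. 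Your continuity step never invokes that hypothesis, so it cannot be complete.

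The paper's route is structurally different. On an affinoid $Y\subset X$ one splits on whether $\sup_n\sup_Y\deg_x f_n$ is finite. If it is, a Lipschitz bound for $d_\H$ forces $f_n(Y)$ into the complement of a fixed closed ball, and Theorem~\ref{thr:pointwise} then gives continuity together with $f(Y\cap\H)\subset\H$. If the local degree is unbounded, the fast-direction machinery (Propositions~\ref{prop:subfine} and~\ref{prop:subfine2}) shows that either a subsequence converges to a constant, or some branched point $x'$ of $\cA_Y$ satisfies $f_n(x')=x_g$ with $\deg_{x'}f_n\to\infty$; a Riemann--Hurwitz count over $\tilde{k}$ (Lemma~\ref{lem:bd-poss}) then forces $\unsepdeg_{x'}f_n\to\infty$, contradicting the hypothesis. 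Your intuition about tangent maps over $\tilde{k}$ points in the right direction, but the actual mechanism---reducing to $f_n^{-1}(x_g)=\{x'\}$ at a vertex of the skeleton and classifying separable rational maps with $R^{-1}\{0,1,\infty\}$ contained in a fixed finite set---is what makes the argument go through, and your unspecified ``refined Schwarz-type estimate'' does not supply it.
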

It is likely that the assumption on the inseparable degree is superfluous in the case $\car(k) =0$.

\medskip

Let us add a word about the proofs of these results (in the case maps avoid three points).
Over the complex numbers, Montel's theorem follows from the existence of a hyperbolic metric
on $\P^{1}(\C) \setminus \{0, 1, \infty \}$ that is necessarily contracted by any holomorphic map. Ultimately it relies on the fact that the universal cover of $\P^{1}(\C) \setminus \{0, 1, \infty \}$ is the unit disk (note that $\P^{1,an}_\C = \P^{1}(\C)$).
Over a non-archimedean field $k$ the space $\poneank \setminus \{0, 1, \infty \}$ is already simply connected so that the former approach fails in this context. On the other hand we may exploit
the tree structure of $\poneank$. If $X$ is a ball or an annulus, and $f_n : X \to \poneank \setminus \{0, 1, \infty\}$  is a sequence of analytic maps, then their images should avoid the convex hull of $\{0, 1, \infty\}$ which looks like a tripod. By extracting a subsequence we can reduce our analysis to the case in which all images lie in a fixed ball of bounded radius. At this point, we need to split our analysis into two cases according to whether or not the local degrees are uniformly bounded. 
When it is unbounded, the proofs of Theorems~A and~B follow by looking closely at the preimage of the center of the tripod (the Gauss point in $\poneank$).
When the local degrees are bounded, then we are essentially in the situation of a family of
polynomials $P_n$ of a fixed degree $d$ with coefficients $(a^{(n)}_0, ... ,a^{(n)}_d)$ that  are uniformly bounded. By sequential compactness, we can assume
$(a^{(n)}_0, ... ,a^{(n)}_d)$ is converging in $\mathbb{A}^{d+1,an}$ and
we show that this implies the pointwise convergence of $P_n$.

\medskip

From our results, naturally arises the question of finding a characterization for the limit maps of analytic functions. Over the complex numbers pointwise limits of analytic functions are characterized as being functions that are analytic outside a polar set. We refer to~\cite{krantz:limit} for a recent survey on this question. We shall not touch upon this problem in the present article.

\medskip

As we mentioned above, Montel's theorem in complex analysis is closely related to the notion of normal families. In a non-archimedean context, we propose the following definition.
\begin{definition*}
Let $X$ be any open subset of $\poneank$.
A family $\cF$ of analytic  functions  on $X$ with values in $\poneank$ is {\sf normal} if for any sequence $f_n \in \cF$
and any point $x \in X$, there exists a neighborhood $V \ni x$, and  a subsequence $f_{n_j}$ that is converging pointwise on $V$ to a continuous function.
\end{definition*}
Let us insist on the fact that the condition on the limit to be continuous is crucial to obtain a reasonable notion.

Recall that a normal family of analytic maps in the sense of~\cite[Definition 5.38]{hu-yang} 
is a set of functions that are equicontinuous at any rigid points with respect to the chordal metric $d(\cdot, \cdot)$ on the standard projective line, where  $d(z,w) = |z-w|/(\max \{ 1, |z|\} \, \max \{ 1, |w| \})$. These two notions of normality are related 
as follows. \begin{thmC}
Let $X\subset \poneank$ be any open subset, and let $\cF$ be a family of analytic functions on $X$ with values in $\poneank$. Then the following statements are equivalent:
\begin{itemize}
\item
$\cF$ is normal in a neighborhood of any rigid point;
\item
$\cF$ is equicontinuous at any rigid point with respect to the chordal metric on $\P^1(k)$.
\end{itemize}
\end{thmC}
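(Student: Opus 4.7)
The plan is to establish both implications of Theorem~C. The forward direction, equicontinuity implies normality, is a localization argument based on Theorems~A and~B. Fix a rigid point $x_0 \in X$ and a sequence $f_n \in \cF$. By equicontinuity at $x_0$, choose an open Berkovich ball $V \ni x_0$ such that $d(f(y), f(x_0)) < \e$ for all $f \in \cF$ and all rigid $y \in V$. Using sequential compactness of $\poneank$, extract a subsequence with $f_n(x_0) \to z_0 \in \poneank$. Since the chordal pseudometric extends continuously to $\poneank$, the images $f_n(V \cap k)$ must eventually lie in a chordal neighborhood of $z_0$ missing three prescribed rigid points of $\P^1(k)$. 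Analytic maps attain a rigid value only at a rigid preimage, so $f_n|_V$ itself avoids these three points for large $n$. Post-composing with a M\"obius transformation sending two of them to $\{0,\infty\}$ and applying Theorem~A yields a pointwise-convergent subsequence $f_{n_j} \to f$ on $V$, and Theorem~B then yields continuity of $f$; the hypothesis on $\unsepdeg_x f_n$ at type~II points $x \in V$ is deduced from the equicontinuity assumption by analyzing the induced reductions $T_x f_n$ on $\P^1(\tilde k)$.

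For the converse I proceed by contradiction. If $\cF$ is normal at every rigid point but fails to be equicontinuous at some rigid $x_0$, there exist $\e>0$, rigid $y_n \to x_0$, and $f_n \in \cF$ with $d(f_n(y_n), f_n(x_0)) \ge \e$. Applying normality at $x_0$ and relabeling, extract $f_{n_j} \to f$ pointwise on a neighborhood $V \ni x_0$ with $f$ continuous; in particular $f_{n_j}(x_0) \to f(x_0)$. By sequential compactness of $\poneank$, extract further so $f_{n_j}(y_{n_j}) \to q \in \poneank$, and continuity of the extended chordal pseudometric gives $\delta(q, f(x_0)) \ge \e$. The contradiction will come from showing $q = f(x_0)$, i.e.\ the diagonal convergence $f_{n_j}(y_{n_j}) \to f(x_0)$.

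The principal difficulty is exactly this diagonal step, since pointwise convergence to a continuous limit does not by itself force convergence along a variable sequence of arguments. To close the gap, I would exploit the rigid analytic structure on $V$: each $f_{n_j}$ is a convergent power series around $x_0$, and the normality hypothesis applies at every rigid point of $V$, not merely at $x_0$. A diagonal extraction across a dense set of rigid points in $V$, together with the coefficient-type convergence this produces, should promote pointwise convergence to uniform convergence on a closed sub-ball of $V$ containing $x_0$ and all but finitely many $y_{n_j}$. Once this uniformity is in hand, $\delta(f_{n_j}(y_{n_j}), f(y_{n_j})) \to 0$; continuity of $f$ at $x_0$ gives $f(y_{n_j}) \to f(x_0)$; and combining these yields $q = f(x_0)$, completing the contradiction.
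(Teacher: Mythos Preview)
Your equicontinuity $\Rightarrow$ normality direction is workable but over-engineered, and the hand-wave about $\unsepdeg$ is unnecessary. Once equicontinuity at $x_0$ puts $f_n(V\cap k)$ in a small chordal ball, then after replacing $f_n$ by $1/f_n$ on a subsequence you already have $f_n(V)$ inside a fixed closed Berkovich ball $B$. At that point you should invoke Theorem~\ref{thm:extract-affinoid} directly: it gives a subsequence converging pointwise on the basic open set $V$ to a continuous limit, with no hypothesis on inseparable degree. Routing through Theorems~A and~B instead forces you to verify the $\unsepdeg$ bound of Theorem~B, and your one-line justification (``analyze the reductions $T_xf_n$'') does not explain how a condition stated only at rigid points controls the residual map at a type~II point.

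The genuine gap is in your converse. You correctly identify that pointwise convergence to a continuous limit does not by itself yield the diagonal conclusion $f_{n_j}(y_{n_j})\to f(x_0)$, and your proposed fix---promote pointwise to uniform convergence via coefficient control---is precisely what fails here: the examples in Section~\ref{sec:example} show that limits in this setting need not be analytic, so there is no coefficientwise convergence to appeal to. The paper bypasses this entirely via Theorem~\ref{thm:normal-char}. The key step you are missing is Schwarz's Lemma: once a subsequence converges pointwise to a continuous $g$ on a ball $B\ni x_0$ and $g(x_0)$ is finite, the images $f_{n_j}(B)$ eventually sit inside a fixed closed ball, and then Schwarz bounds $\sup_B|f_{n_j}'|$ uniformly. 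Running this over all sequences in $\cF$ gives $\sup_{\cF}\sup_B|f'|<\infty$, hence a uniform Lipschitz estimate $|f(y)-f(x_0)|\le C|y-x_0|$ for all $f\in\cF$, which is equicontinuity at $x_0$ without any diagonal or uniform-convergence argument. Splitting $\cF$ according to $|f(x_0)|\le 1$ or $>1$ handles the chordal metric.
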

These three theorems subsequently imply
\begin{CorD}
Any family of meromorphic functions on an open subset $X$ of $\poneank$ such 
that, for all $x \in X$,  local unseparable degrees at $x$ are bounded, and
avoids three points in $\poneank$ is both normal, and equicontinuous at any rigid point.
\end{CorD}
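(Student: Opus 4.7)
The plan is to derive Corollary~D as a direct assembly of Theorems~A, B, and~C. Let $\cF$ be the given family on $X$ and $\{a,b,c\}\subset\poneank$ the three avoided points.

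My first step would be to normalize the target: choose $\phi\in\mathrm{PGL}(2,k)$ sending $\{a,b,c\}$ to $\{0,1,\infty\}$ and replace $\cF$ by $\phi\circ\cF$. Since $\phi$ has degree one, post-composition preserves local unseparable degrees at every type~II point, so the boundedness hypothesis transfers unchanged, and the normalized family now takes values in $\poneank\setminus\{0,1,\infty\}$.

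To verify normality, fix a sequence $f_n\in\cF$ and a point $x_0\in X$, and pick a connected open neighborhood $V\subset X$ of $x_0$. Since the maps $f_n|_V$ take values in $\poneank\setminus\{0,\infty\}$, Theorem~A furnishes a subsequence $f_{n_j}$ converging pointwise on $V$ to some $f:V\to\poneank$. These $f_{n_j}|_V$ still avoid $\{0,1,\infty\}$, and $\unsepdeg_x(f_{n_j})$ is bounded at every type~II point $x\in V$ by the hypothesis, so Theorem~B applies and certifies that $f$ is continuous on $V$. This is exactly the condition in the proposed definition of normality.

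For equicontinuity at any rigid point $x\in X$, I would invoke Theorem~C: normality of $\cF$ in a neighborhood of $x$ is equivalent to equicontinuity of $\cF$ at $x$ with respect to the chordal metric. Since normality in a neighborhood of every point has just been established, equicontinuity at every rigid point follows. The only matter requiring a sanity check is that Möbius normalization preserves the unseparable-degree hypothesis, which is immediate since $T_x(\phi\circ f)=T_{f(x)}\phi\circ T_x f$ and $T_{f(x)}\phi$ is a degree-one, hence separable, map of residue projective lines. There is no genuine obstacle; the content lies entirely in Theorems~A, B, and~C.
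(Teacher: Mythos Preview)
Your proposal is correct and is precisely the assembly the paper intends: it states only that ``these three theorems subsequently imply'' Corollary~D, and your argument spells out exactly how Theorems~A, B, and~C combine. The one cosmetic omission is that after proving normality and equicontinuity for $\phi\circ\cF$ you should note that post-composing with the M\"obius map $\phi^{-1}$ transfers both conclusions back to $\cF$, which is immediate since $\phi^{-1}$ is a homeomorphism and is Lipschitz for the chordal metric.
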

We give two dynamical applications of this fact. First we prove that the domain of normality
of a rational map coincides with its Fatou set, see Theorem~\ref{thm:normal} below. Recall that
Rivera-Letelier proved that the Fatou set coincides with the equicontinuity locus (for the uniform structure) in the case  $k = \C_p$, ~\cite[Theorem 10.72]{baker-rumely:book}.  But
no characterization of the Fatou set in terms of  equicontinuity properties of the sequence of iterates was previously known in full generality. We refer to~\cite[pp.334--335]{baker-rumely:book} for  an interesting discussion on this problem.

As a second application, we define the Julia set of an entire function in $\aoneank$
as the complement of its domain of normality. In a sense, we put the work of Bezivin~\cite{bezivin:entire}
in the framework of Berkovich spaces. We extend  his work by showing that periodic 
orbits are dense in the Julia set at least when $\car(\tilde{k}) =0$. We also show that contrary to the complex setting, there exists no unbounded Fatou component (Baker domain) for non-archimedean entire functions.

\medskip

Our paper is divided into 6 sections. The first four are aiming at the proofs of Theorems~A and~B. The last two contain applications of our main results.

Section~\ref{sec:ptwisecvg} contains a technical result that plays a key role in the sequel. 
It gives a sufficient condition for a pointwise convergent sequence of analytic functions
to have a continuous limit.
The case of  families of \emph{bounded} analytic functions is analyzed in detail in Section~\ref{sec:bdd-fam}, and a proof of Theorem~A is given as an application of these techniques. The sequential compactness of Berkovich affinoid domains in arbitrary dimension appears in a crucial manner here. Section~\ref{sec:proofB} deals with families of analytic functions with unbounded local degree, and contains a proof  of Theorem~B. In Section~\ref{sec:example}, we describe some examples to illustrate our results.

Section~\ref{sec:normal} is devoted to our notion of \emph{normality}. We discuss local conditions for
characterizing normal families, and we relate the normality locus of a rational map to its Fatou set. 

In Section~\ref{sec:entire}, we define the Fatou/Julia set of any transcendental entire map of $\aoneank$, and give its first properties.

\medskip
\noindent{ \bf Acknowledgements}: this project has been conducted during various stays of the authors
at different institutions including the CMLS at the \'Ecole Polytechnique, the Faculdad de Matem\'aticas of the PUC, and the Banff Center. We warmly thank them for the nice working atmosphere they provided to us. We thank L. Rempe for his corrections regarding complex entire trascendental maps. We are grateful with  X. Faber for many valuable comments on a first version of this work.

%
%

\section{Pointwise convergent analytic maps}\label{sec:ptwisecvg}
  Throughout, $k$ is a complete field endowed with a (non trivial) non-archimedean norm.
Recall that a basic open set of $\poneank$ is a connected component of the complement of finitely many points. 

Our goal in this section is to prove the following
  \begin{theorem}
    \label{thr:pointwise}
    Let $X \subset \aoneank$ be a basic open set. Consider a sequence 
$\{ f_n \}$ of analytic maps $f_n : X \to \aoneank$, and suppose it converges
pointwise to a function $f:X \to \aoneank$.
Then $f$ is continuous. Moreover, if $f$ is not constant, then we have $f(X \cap \H) \subset \H$.
  \end{theorem}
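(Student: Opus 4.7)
The plan is to use that the topology on $\aoneank$ is generated by the continuous functions $x \mapsto |P|_x$ for $P \in k[w]$. Showing continuity of $f$ therefore reduces to showing continuity of $\Phi_P : y \mapsto |P \circ f|_y$ on $X$, for every polynomial $P \in k[w]$. Each approximation $\Phi_{P,n}(y) := |P \circ f_n|_y$ is continuous since $P \circ f_n$ is analytic, and pointwise convergence of $\{f_n\}$ gives $\Phi_{P,n} \to \Phi_P$ everywhere on $X$.

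The key tool is the maximum-modulus identity: for any closed disk $D \subset X$ with Gauss point $x_D$ and any $g \in \cO(X)$, $|g|_{x_D} = \sup_{y \in D}|g|_y$. Applying this to $g = P \circ f_n$ and letting $n \to \infty$ gives $\Phi_P(x_D) \ge \Phi_P(y)$ for every $y \in D$, so the pointwise limit $\Phi_P$ still satisfies maximum modulus along the tree. I would then deduce continuity of $\Phi_P$ at a point $x_0 \in X$ case by case. At a rigid or type II/III point, a neighborhood basis of small disks together with Weierstrass preparation applied to $P \circ f_n$ near $x_0$ allows one to sandwich $\Phi_P(y) - \Phi_P(x_0)$ by quantities tending to zero; at a type IV point, one approximates $x_0$ by a nested decreasing sequence of disks with Gauss points $x_{D_i} \to x_0$ and argues similarly using the already-established continuity at the $x_{D_i}$.

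For the second assertion, suppose $f$ is not constant and, for contradiction, take $x_0 \in X \cap \H$ with $f(x_0) = a \in k$. Then $|f_n - a|_{x_0} \to 0$; if $x_0$ is type II or III, it is the Gauss point of some closed disk $D \subset X$ and the maximum-modulus identity yields $\sup_{y \in D}|f_n - a|_y \to 0$, so $f_n \to a$ uniformly on $D$ and hence $f \equiv a$ on $D$. A nested-disk variant handles type IV. By pointwise convergence together with connectedness of $X$, one then propagates $f \equiv a$ to all of $X$, contradicting non-constancy. The main obstacle is the continuity of $\Phi_P$ at points $x_0 \in \H$: pointwise convergence of tree functions satisfying the maximum principle does not by itself force continuity of the limit, and to upgrade it one must use the analytic structure, controlling the zeros of $P \circ f_n$ near $x_0$ via Weierstrass preparation in a manner uniform in $n$.
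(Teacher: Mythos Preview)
Your approach is genuinely different from the paper's, and the gap you yourself flag is fatal: you never explain how to obtain the ``uniform in $n$'' control from Weierstrass preparation. This is not a technicality. The degree of the Weierstrass polynomial of $P\circ f_n$ near $x_0$ is governed by $\deg_{x_0}(P\circ f_n)$, and nothing in your hypotheses prevents $\deg_{x_0} f_n \to \infty$. In that regime there is no uniform factorization to sandwich with, and pointwise convergence of the $\Phi_{P,n}$ together with the maximum principle gives you only a one-sided inequality; the other side simply fails without further input. The paper resolves exactly this by first proving (Proposition~\ref{prop:3}, via a convexity/fast-arc analysis of $\log\diam$ along segments in $\H$) that if the local degree blows up anywhere then the limit is constant; only after reducing to uniformly bounded local degree does one get a uniform Lipschitz bound $d_\H(f_n(x),f_n(x'))\le D\, d_\H(x,x')$, from which sequential continuity and then continuity follow.

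There is also a more elementary error: type~II and type~III points do \emph{not} admit neighborhood bases of disks. A type~II point is a branch point of the tree, and any neighborhood contains pieces of infinitely many directions; a type~III point sits on an edge and has annuli, not disks, as basic neighborhoods. So the maximum modulus over a disk with boundary $x_0$ controls $\Phi_P$ only in the directions pointing into that disk, and says nothing about the direction towards $\infty$ or towards $\partial X$. This is precisely where the unbounded-degree phenomenon bites, and it is why the paper's dichotomy is unavoidable. Your argument for the second assertion has the same issue: for $x_0$ on the skeleton of $X$ (e.g.\ on the skeleton of an annulus) there is no closed disk $D\subset X$ with Gauss point $x_0$, so you cannot conclude $f\equiv a$ on a full neighborhood of $x_0$ in $X\cap\H$; the paper instead uses the Lipschitz bound for $d_\H$, which again requires the bounded-degree reduction.
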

 More than the result itself, it is the technique involved that will be useful in the sequel. 
 The proof relies on a thorough analysis of the local degrees of the sequence $f_n$, and splits into two parts.  When the local degrees are uniformly bounded, then $f_n$ is uniformly Lipschitz for the hyperbolic metric in $\H$, and one infers the continuity of $f$  from this bound. Otherwise, one proves that the local degree explodes at one point in $X$, and the limit is constant.
 
 We emphasize that the assumption on the limit function $f$ to be valued in $\aoneank$ is crucial.
 In fact the sequence $f_n (z) = z^{p^n}$ in characteristic $p$ converges pointwise on $X = \aoneank$ to a function with values in $\poneank$ that is not continuous. We refer to Section~\ref{sec:proofs} for more examples.
 

\subsection{Basics on the Berkovich projective line}\label{sec:basics}
We refer to~\cite{baker-rumely:book} for a thorough description of this space, or to~\cite{berkovich:book}.
For sake of simplicity, we assume that $k$ is algebraically closed.

The Berkovich affine line $\aoneank$ is the set of multiplicative semi-norms on the ring $k[T]$
whose restriction to $k$ coincides with its non-archimedean norm. We shall denote by $|P(x)|\in \R_+$ the semi-norm of a polynomial $P \in k[T]$ with respect to a  point
$x\in \aoneank$.
Given $x\in \aoneank$ the set $\{ P, |P(x)| =0\}$ is a prime ideal, and $x$ induces a norm on the fraction field of $k[T]/\{ P, |P(x)| =0 \}$. One denotes by $\cH(x)$ the completion of this field with respect to $x$. It is a non-archimedean valued extension of $(k,|\cdot|)$. 

Since $k$ is algebraically closed, $x$ is determined by its values on linear functions $T- z$ with $z \in k$.
In particular, when $\{ P, |P(x)| =0\}$ is non-trivial, then $x$ is called a rigid point (or a type I point), and can be identified with  a point in $k$. The set of type I points is denoted by $\mathbb{A}^1(k)$, and for any subset $X\subset\aoneank$, we shall write $X(k)$ for the intersection $X \cap \mathbb{A}^1(k)$.

Otherwise $x$ is a norm, and falls into one of the following three categories. If its value group is equal to $|k^*|$, and the transcendence degree of  $\cH(x)$ over $k$ is equal to $1$, then $x$ is said to be of type II.
If the value group of $x$ is not equal to $|k^*|$, then $x$ is said to be of type III. Finally in the remaining case, $x$ is said to be of type IV.

One can show that for any point $x$ which is not of type IV, there exists a unique closed ball $B$  (i.e. of the form $B_r(y) = \{ z, |z- y| \le r\}$, for some $y \in k$, and $r\ge0$) such that 
$|P(x)| = \sup_B |P|$. The quantity $\sup\{  |z-z'|, z, z' \in B\}$ is called the diameter of $B$. If $x$ is not of type IV, we write $\diam (x)\in \R_+$ for the diameter of its associated ball. 

As usual, we denote the Gauss point by $x_g$ which is by definition the type II point of $\aoneank$ associated to the unit ball. 
\medskip

The set $\aoneank$ is endowed with the topology of the pointwise convergence, for which it is locally compact. There is also a natural partial order relation $x \le x'$ iff $|P(x) | \le |P(x')|$ for all $P \in k[T]$.

Given $x \in \aoneank$, the set $(x,\infty):= \{ y, y > x\}$ is a subset of type II and III points that correspond to an increasing family of balls. In particular, the diameter function on this family induces a natural homeomorphism between $(x,\infty)$ and a subset of $\R_+$.
In particular, the function $\diam$ has a natural extension to $\aoneank$ which is continuous on segments  $(x,\infty)$.
It is a fact that  any two points $x_1, x_2 \in \aoneank$ admit a maximum $\max\{ x_1, x_2\}$ for the order relation, and $(x_1,\infty) \cap (x_2,\infty) = (\max\{ x_1, x_2\},\infty)$.

It follows that  $\aoneank$ admits a natural (non-metric) $\R$-tree structure, see~\cite[Chapter 3]{valuative:tree} for a formal definition. Given any two points $x_1, x_2 \in \aoneank$, we denote by $[x_1, x_2] = \{ x_1 \le x \le\max\{ x_1, x_2\}\} \cup \{ x_2 \le x \le\max\{ x_1, x_2\}\}$, and call it the segment joining $x_1$ to $x_2$.

\medskip

The set $\aoneank$ is endowed with a natural structural sheaf of analytic functions.
If $U$ is an open subset of $\aoneank$, then $\cO(U)$ is the completion with respect to the sup norm on $U \cap k$ of the space of rational functions in $k(T)$ having poles outside $U$.
Any analytic function $f \in \cO(U)$ gives rise to a continuous map $f: U \to \aoneank$.
When $P$ is a polynomial, and $f$ is a rational map having no poles on $U$, then $ |P(f(x))| := |(P\circ f) (x)|$.

\medskip

The complement of rigid points in $\aoneank$ is denoted by $\H$. It is endowed with a natural complete metric which respects the tree structure, is invariant under the action of $\mathrm{PGL}(2,k)$, and is defined by $d_\H(x(r), x(r')) = |\log r - \log r'|$
if $x(r)$ is the point associated to the ball centered at $0$ of radius $r$.
The restriction of any analytic map $f$ to a segment $I \subset \H$ is piecewise affine in the sense that one can subdivide $I$ into finitely many segments $(x,x')$ on which $d_\H( f(x), f(x') )  = m \, d_\H ( x,x')$ for some integer $m$.

\medskip

The projective Berkovich line $\poneank = \aoneank \cup \{ \infty\}$ can be defined topologically as the one point compactification of $\aoneank$. It is convenient to view $\infty$ as a function on  $k[T]$ sending any polynomial of positive degree to $\infty$, and restricting to the standard norm on $k$. By convention $\infty$ is a type I point, so that the space of type I points in $\poneank$ is naturally in bijection with $\P^1(k)$.
The projective Berkovich line is compact, and has a natural structure of $\R$-tree for which it is complete in the sense of~\cite{valuative:tree}.

The space $\poneank$ can also be defined as an analytic curve over $k$ in the sense of Berkovich by patching together two copies of the ringed space $\aoneank$ using the
map $T \mapsto T^{-1}$. We thus have a natural notion of analytic functions on any open subset 
of $\poneank$.

\medskip

Recall that a ball in $\poneank$ is either a ball in $\aoneank$ or the complement of a ball in
$\aoneank$. It is closed (resp. open) if it is defined by a non-strict (resp. strict) inequality. 
An affinoid domain in $\poneank$ is the complement of a finite union of open balls.
The boundary of an affinoid domain is a finite set. We denote by  $\cA_Y$ the convex hull
of $\partial Y$. It is a finite tree that is called the skeleton of $Y$. 
It is a fact that the image of an affinoid domain by a non constant analytic map
is an affinoid domain.

A basic open set of $\poneank$  is a connected component of the complement of finitely many points in $\poneank$.  As for affinoid domains, the boundary of a basic open set $U$ is finite, and its convex hull is a finite tree that we denote by $\cA_U$ and refer to as the skeleton of $U$.

\medskip

Since $\poneank$ is a non-metric $\R$-tree, we may define the space of directions $T_x \poneank$ at  a point $x$ as the set of equivalence classes of  segments of the form $(x,y)$
with $x\neq y$ under the relation that identifies segments whose intersection
contains $(x,y')$ for some $y'$. For a type I or type IV point, $T_x \poneank$ is reduced to a singleton, hence they are end points of $\poneank$ for its $\R$-tree structure.
When $x$ is of type III, $T_x \poneank$ has two points, so that $x$ is a regular point. Finally when $x$  is a type II point, then  $T_x \poneank$ is isomorphic to the projective line over the residue field $\tilde{k}$ of $k$, and this isomorphism is canonical once a coordinate is fixed on the affine line. Since $\# \P^1(\tilde{k}) \ge 3$, a type II point is always a branched point in $\poneank$.

\medskip

Any analytic map $f$ defined in a neighborhood of $x$
induces a map $$T_x f : T_x \poneank\to T_{f(x)}\poneank~.$$
When $x$ is of type II, then $T_x f$ is given by a rational function under the natural identification
of $T_x\poneank$  with $\P^1(\tilde{k})$.
We shall denote by $\deg_x f$ its degree. 

It is a non-trivial fact that the function $\deg_x f$ can be extended to all points such that 
for any open set $U,V$ for which the induced map $f : U \to V$ is proper, then $ V \ni x \mapsto  \sum_{y \in f^{-1}(x) \cap U} \deg_y f$ is a constant function, see~\cite{faber:ramification,favre-letelier:equi}.

\medskip

Let  us finally fix some notation that will be used constantly in the sequel.

A direction at $x$ containing a point $x'$ will be denoted by $D_x (x')$.
We identify a direction in $T_x \poneank$ with the subset of $\poneank$ formed by all
the points in that direction.
  
Given an affinoid domain $Y$, and any $x \in \interior Y$, we  let $\val_Y(x)$ be the number of directions at $x$ pointing towards 
a point in $\partial Y$. That is, the number of connected components of $\cA \setminus \{ x\}$ where $\cA$
is the convex hull of $\partial Y \cup \{ x \}$.


\subsection{Fast directions}

In this section, we analyze the local degree on segments pointing towards infinity, and
prove a technical result (Proposition~\ref{lem:3} below) that will be applied  several times in the next sections.

\begin{lemma}[Convexity]
  \label{lem:1}
Suppose $f: Y \to \aoneank$ is a non-constant analytic function on an affinoid domain $Y \subset \aoneank$.
Then for any two points $x_0,x'\in Y \cap \H$ such that 
the segment $(x_0,x')$ contains no vertex of the convex hull of $\cA_Y \cup \{ x_0, x'\}$, the function  $  s \mapsto \log \diam f (x_s)$ is a convex, piecewise linear, and not locally constant map.
\end{lemma}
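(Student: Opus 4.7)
The overall strategy is to reduce to a local model in a coordinate centered at a rigid point $a$ common to every ball on the segment, and then to read off all three conclusions from the max-of-affine-functions formula for the seminorm of a power-series/Laurent expansion around $a$.

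First, I would observe that the hypothesis forces $x_0$ and $x'$ to be comparable in the tree order on $\aoneank$: if they were incomparable, their join $y = \max\{x_0, x'\}$ would be a type II branching point strictly inside $(x_0, x')$, and $y$ is automatically a vertex of the convex hull of $\{x_0, x'\}$, hence of $\cA_Y \cup \{x_0, x'\}$, contradicting the hypothesis. So I may assume $x_0 < x'$, with corresponding nested balls $B_{s_0} \subsetneq B_{s_1}$. When $x_0$ is of type II or III, I can pick a rigid point $a \in B_{s_0} \cap Y$ at which $f$ is analytic (the type IV case follows by a limiting argument). Parameterize the segment by $s = \log\diam x_s \in [s_0, s_1]$, so that $x_s$ is the Gauss point of the closed ball of radius $e^s$ around $a$.

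Next comes the key identity. Since $a$ lies in every ball $B_{e^s}(a)$ and analytic maps send balls into balls, $f(a)$ lies in the ball attached to the point $f(x_s)$, so
\[
\diam f(x_s) \;=\; |T - f(a)|\bigl(f(x_s)\bigr) \;=\; |f - f(a)|(x_s).
\]
I would then exploit that the absence of interior vertices of $\cA_Y \cup \{x_0, x'\}$ in $(x_0,x')$ guarantees a single uniform Laurent expansion $f(z) = \sum_{i \in \Z} a_i (z-a)^i$ valid on a Berkovich neighborhood of the entire segment (a genuine power series $\sum_{i \geq 0} a_i(z-a)^i$ when $a$ is inside $Y$). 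Evaluating the Gauss-type seminorm along the segment, using $|T-a|(x_s) = e^s$, yields
\[
\log\diam f(x_s) \;=\; \max_{\substack{i \neq 0\\ a_i \neq 0}} \bigl(\log|a_i| + i s\bigr).
\]

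The three claims of the lemma then follow directly. Convexity is automatic as the pointwise maximum of affine functions; piecewise linearity holds because on the compact interval $[s_0, s_1]$ only finitely many of the $\log|a_i| + is$ can realize the maximum, the slopes being integers; and for non-constancy, the assumption that $f$ is non-constant forces some $a_i$ with $i \neq 0$ to be nonzero, and since every function participating in the max has nonzero integer slope, each linear piece of the graph has nonzero slope and the function is nowhere locally constant.

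The step I anticipate as the main technical obstacle is justifying rigorously the existence of a \emph{single} expansion around $a$ valid on a neighborhood of the whole segment, as opposed to a patched family of local expansions. This comes down to a Mittag-Leffler/Weierstrass-type decomposition on $Y$: zeros and poles of $f$ produce breakpoints of $\log|f-f(a)|$ on the skeleton precisely at the vertices of the refined tree $\cA_Y \cup \{x_0,x'\}$ (interior zero-branch-off points only increase slopes, preserving convexity and non-constancy), so the vertex-free hypothesis on $(x_0, x')$ is exactly what ensures the uniform expansion we need. Once that is in place the remainder of the proof is the direct computation above.
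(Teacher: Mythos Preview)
Your Newton-polygon approach is quite different from the paper's proof, which never writes down a series expansion. There, piecewise linearity is obtained by approximating $f$ uniformly by rational maps and invoking a known result for those; non-constancy follows because the slopes equal local degrees (nonzero integers); and convexity is argued pointwise by contradiction, using that any tangent direction at $x\in(x_0,x')$ which $T_xf$ sends towards $\infty$ must point to $\partial Y$, since otherwise the corresponding open ball would lie in $Y$ and $f$ would hit $\infty$.

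Your argument has a genuine gap at the identity $\diam f(x_s)=|f-f(a)|(x_s)$. You justify it via ``analytic maps send balls into balls, so $f(a)$ lies in the ball attached to $f(x_s)$,'' but that principle applies only when the closed ball $B_{e^s}(a)$ is \emph{contained in} $Y$. When $(x_0,x')$ lies on the skeleton of an annulus-piece of $Y$ --- exactly the Laurent situation you flag --- the ball $B_{e^s}(a)$ contains the hole and is not a subset of $Y$. Concretely, on $Y=\{r_1\le|z|\le r_2\}$ with $f(z)=1/z$ and $a\in Y$ with $r_1\le|a|\le e^{s_0}$, one has $\diam f(x_s)=e^{-s}$ but $|f-f(a)|(x_s)=\max(|a|^{-1},e^{-s})=|a|^{-1}$ for every $s>s_0$, a constant. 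Note also that $f(a)\neq a_0$ for a Laurent expansion valid only on $|z-a|>|a|$, so the passage from $|f-f(a)|(x_s)$ to $\max_{i\neq0}(\log|a_i|+is)$ is equally unjustified. The repair is straightforward: bypass $f(a)$ and use $\diam f(x_s)=\inf_{w\in k}|f-w|(x_s)$ directly; with a Laurent expansion $\sum_i a_i(z-c)^i$ on an annulus containing the segment (the center $c$ may lie outside $Y$), minimizing over $w$ yields exactly $\log\diam f(x_s)=\max_{i\neq0}(\log|a_i|+is)$, after which your concluding argument is correct.
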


\begin{proof}
We first prove that the restriction of $\logdiam \circ f$ 
to any closed segment $I \subset \H$ is piecewise linear
with respect to the hyperbolic metric $d_\H$. It is sufficient to treat the
case of rational maps. Indeed any analytic map is a uniform limit of rational maps, 
and any two analytic maps that are sufficiently close in sup norm in a neighborhood of 
$I$ are equal on $I$.
For rational maps, the result follows from~\cite[Theorem 9.35 A]{baker-rumely:book}, by
noting that if $x,x'\in \H$
correspond to balls one contained in the other, then $d_\H (x,x') = |\log \diam (x) - \log \diam (x')|$.
Observe also that  the absolute value of the slope of this function is given by the local degree on any segment  where it is linear. In particular it cannot be locally constant.

Consider a point $x \in (x_0, x')$, and let $D_0, D_1 \in T_x \aoneank$ be the two directions
determined by $x_0$ and $x'$ respectively. Since $x$ is not a vertex of $\cA_Y\cup \{ x_0, x'\}$, at least 
one of these two directions (say $D_1$) is mapped by $T_xf$ to the direction $D$ determined by $\infty$ at $f(x)$. 

Let $m_0$ (resp. $m_1$)  be the slope of $\log  \diam \circ f$ on $(x_0,x)$ (resp. $(x, x')$) in a neighborhood of $x$. 
Suppose by contradiction that $m_0 > m_1$. 
Since $D$ points to $\infty$, we have $m_1>0$. Note that $x$ is automatically a type II point.   Then $T_x f$ is a rational map of degree at least $m_0$, and
there necessarily exists at least one direction $D_2$ at $x$ different from both $D_0$ and $D_1$ which is mapped to $D$. 
By assumption $x$ cannot be a vertex of the convex hull of $\cA_Y \cup \{ x_0, x' \}$, so that 
the open ball $B$ determined by $D_2$ in $\poneank$ is actually contained in $Y$.
But then $f(B)$ would contain $\infty$ which contradicts $f(Y) \subset \aoneank$.
\end{proof}

For convenience, we introduce the following definition.
\begin{definition}
  \label{def:1}
  Let $Y \subset \aoneank$ be an affinoid domain and $f: Y \to \aoneank$ be an analytic function.
A {\sf fast direction} $D \in T_x \aoneank$ for $f$ at $x$
is a direction determined by a point in $\partial Y$ such that $\infty \in T_xf (D)$ and
maximizing $\deg_D T_x f$ over all such directions.
\end{definition}
For any non-constant function $f$ and any point  $x \in \interior Y$, the map $T_xf : T_x \poneank \to T_{f(x)}\poneank$ is surjective. In particular, 
any point  $x \in \interior Y$ admits a fast direction. 
\begin{definition}
Given any $x \in \interior Y$, a {\sf  fast arc}  at $x$ is a segment
$[x,x']$ such that for all $y \in [x,x')$, the direction at $y$
determined by $x'$ is a fast direction.
\end{definition}
Note that the restriction of $f$ to a fast arc is injective and increasing to $\infty$.
\begin{lemma}
  \label{lem:2}
 Any point $x \in \interior Y$ admits a fast direction $D$ such that 
$$\deg_D T_x f \ge \dfrac{\deg_x f}{\val_Y (x)}~.$$
\end{lemma}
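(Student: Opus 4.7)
The plan is to reduce the lemma to a counting argument on the tangent map. Assume first that $x$ is of type II (the other types are easier, since $T_x\poneank$ then has at most two elements and everything is trivial). Then $T_x f : T_x\poneank \to T_{f(x)}\poneank$ is a rational map of degree $\deg_x f$ on $\P^1(\tilde{k})$, so the preimages of the direction $D_\infty$ at $f(x)$ containing $\infty$ have local degrees summing to $\deg_x f$. The heart of the argument is to show that every such preimage is a direction determined by a point of $\partial Y$; granting this, there are at most $\val_Y(x)$ such preimages, so the maximum local degree among them is at least $\deg_x f/\val_Y(x)$, and by definition this maximum is realized by a fast direction.

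For the key claim, I would argue by contradiction: suppose $D \in T_x\poneank$ satisfies $T_x f(D) = D_\infty$ but is not determined by any point of $\partial Y$. Then the connected component $U_D$ of $\poneank\setminus\{x\}$ in the direction $D$ meets no boundary point, and connectedness together with $x \in \interior Y$ forces $U_D \subset \interior Y$. Moreover $U_D$ is disjoint from the skeleton $\cA_Y$, because any path in $\cA_Y$ between two boundary points would have to pass through the cut vertex $x$ in order to enter $U_D$. Since $\infty \notin Y$, we also have $\infty \notin U_D$, so $U_D$ is a genuine Berkovich open ball.

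The final step is to produce points of $U_D$ at which $\diam \circ f$ is arbitrarily large. Pick a sequence of type II points $y_n \in U_D$ with $d_\H(x, y_n) \to \infty$. For each $n$ the open segment $(x, y_n) \subset U_D$ avoids $\cA_Y$, so the vertex hypothesis of Lemma~\ref{lem:1} is satisfied, and that lemma yields convex piecewise-linearity of $s \mapsto \log \diam f(x_s)$ on $[x, y_n]$ with one-sided slope at $x$ equal to the integer $\deg_D T_x f \geq 1$. Convexity propagates this lower bound along the entire segment, so
\[
\log \diam f(y_n) - \log \diam f(x) \;\geq\; d_\H(x, y_n),
\]
and the right-hand side tends to infinity. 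This contradicts the boundedness of $f(Y) \subset \aoneank$, which holds because the image of an affinoid under a non-constant analytic map is again an affinoid. The main obstacle I anticipate is precisely this translation of the infinitesimal datum $T_x f(D) = D_\infty$ into quantitative growth of $\diam \circ f$ deep inside $U_D$: Lemma~\ref{lem:1} combined with boundedness of the image affinoid is what makes it go through.
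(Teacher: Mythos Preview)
Your argument is correct and follows the same line as the paper's: for type~II points, sum the local degrees of $T_x f$ over the preimages of the direction to $\infty$, observe that each such preimage must point toward $\partial Y$, and apply the pigeonhole principle. The only difference is in how you justify that key observation: you invoke the convexity conclusion of Lemma~\ref{lem:1} to force $\diam\circ f$ to be unbounded on $U_D$, whereas the paper (implicitly, reusing the idea already inside the proof of Lemma~\ref{lem:1}) notes directly that an open ball $U_D\subset Y$ whose tangent image is the direction of $\infty$ would satisfy $\infty\in f(U_D)$, contradicting $f(Y)\subset\aoneank$; your route is valid but a touch more elaborate than needed.
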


\begin{proof}
If $x$ is not of type II, then $\deg_D T_x f = \deg_x f$ for any direction at $x$.
Suppose $x$ is of type II. Then 
$\sum \deg_D T_x f  = \deg_x f $ where the sum ranges over all directions $D$ that are mapped to infinity by $T_xf$, which implies the result, since all these directions $D$ must point to an element of $\partial Y$. 
\end{proof}

We are now in position to prove the following key result.

\begin{proposition}
  \label{lem:3}
  Consider an affinoid domain $Y \subset \aoneank$. Then there exists a constant $C >0$ such that, for all non-constant analytic maps $f: Y \to \aoneank$ and all  $x_0 \in \interior Y$, there exists $x' \in \partial Y$ with the following properties:
  \begin{enumerate}
  \item $[x_0,x']$ is a fast arc (for $f$);
  \item $\deg_{x'} f \ge C \, {\deg_{x_0} f}$;
  \item $\deg_{D_{x}(x')}T_x f \ge  C \cdot {\deg_{x_0} f},$ for all $x \in [x_0, x')$.
  \end{enumerate}
\end{proposition}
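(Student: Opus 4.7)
The plan is to construct the fast arc iteratively along the finite tree $T_0$ defined as the convex hull of $\cA_Y \cup \{x_0\}$. Let $V$ denote the supremum of $\val_Y(z)$ over $z \in T_0$; any such $z$ with $\val_Y(z) \ge 2$ lies on $\cA_Y$ with $\val_Y(z)$ equal to its valence in $\cA_Y$, so $V$ depends only on $Y$. I claim $C := 1/V$ works.

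Starting from $z_0 := x_0$, at each step with $z_i \in \interior Y$, apply Lemma~\ref{lem:2} to select a fast direction $D_i \in T_{z_i}\poneank$ satisfying $\deg_{D_i} T_{z_i} f \ge \deg_{z_i} f / V$. Since $D_i$ is determined by a point of $\partial Y \subset \cA_Y \subset T_0$, it corresponds to a unique edge of $T_0$ emanating from $z_i$; let $z_{i+1}$ be the next vertex of $T_0$ along that edge, or set $x' := z_{i+1}$ and stop if this vertex is a boundary point. The critical observation is that the process never backtracks: for $i \ge 1$, the defining property $\infty \in T_{z_{i-1}}f(D_{i-1})$ of a fast direction combined with Lemma~\ref{lem:1} shows that $\log\diam \circ f$ has strictly positive slope on $(z_{i-1}, z_i)$. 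This forces $T_{z_i}f$ to map the backward direction $D_{z_i}(z_{i-1})$ strictly away from $\infty$, so the backward direction fails condition~(ii) of being a fast direction; hence $D_i \ne D_{z_i}(z_{i-1})$ for every $i \ge 1$. Since $T_0$ is finite, the construction must terminate at some $x' \in \partial Y$.

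To verify~(1), at any point $y$ in the interior of an edge $(z_i, z_{i+1})$, the backward direction is excluded by the same positivity argument, and any direction at $y$ pointing off $T_0$ fails to contain any boundary point (as $\partial Y \subset T_0$); thus $D_y(x')$ is the unique fast-direction candidate at $y$ and is therefore fast. For~(3), Lemma~\ref{lem:1} implies the slope of $\log\diam \circ f$ along $[x_0, x']$ is non-decreasing, so on every sub-edge it is at least the slope on $(z_0, z_1)$, namely $\deg_{D_0} T_{z_0} f \ge \deg_{x_0} f / V$. This slope equals $\deg_{D_x(x')} T_x f$ both at each $x = z_i$ with $i < m$ and at every $x$ interior to an edge, proving~(3) with $C = 1/V$. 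Item~(2) follows from $\deg_{x'} f \ge \deg_{D_{x'}(z_{m-1})} T_{x'} f$, which is the slope on the terminal edge and hence also $\ge \deg_{x_0} f / V$.

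The main obstacle is extending the fast-arc condition and the degree estimate from the finitely many vertices $z_i$ at which Lemma~\ref{lem:2} is applied to every point of $[x_0, x']$: this rests on the convexity of Lemma~\ref{lem:1} (to rule out backward directions) and on the inclusion $\partial Y \subset \cA_Y \subset T_0$ (to rule out off-skeleton directions as fast-direction candidates).
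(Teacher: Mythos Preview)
Your construction of the fast arc (iterating fast directions along $T_0$, arguing non-backtracking via positivity of the slope, and terminating by finiteness of $T_0$) is fine and matches the paper's approach. The gap is in your verification of~(3), specifically the sentence ``Lemma~\ref{lem:1} implies the slope of $\log\diam \circ f$ along $[x_0,x']$ is non-decreasing.''

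Lemma~\ref{lem:1} only gives convexity on a segment whose \emph{interior contains no vertex} of $T_0$. At a vertex $z_i$ with $i\ge1$ the outgoing slope $m_+^{(i)}=\deg_{D_i}T_{z_i}f$ can be strictly smaller than the incoming slope $m_-^{(i)}=\deg_{D_{z_i}(z_{i-1})}T_{z_i}f$. The backward direction is disqualified from being fast because it fails condition~(ii) (it maps away from $\infty$), \emph{not} because its degree is small; and $D_i$ only maximizes the degree among directions that \emph{do} map to $\infty$. Since those degrees merely sum to $\deg_{z_i}f$, the maximum among them is only guaranteed to be $\ge \deg_{z_i}f/\val_Y(z_i)\ge m_-^{(i)}/\val_Y(z_i)$. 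For instance, a tangent map with the backward direction a point of local degree $10$ over $0$, and the two forward skeleton directions poles of orders $6$ and $4$, gives incoming slope $10$ but outgoing slope $6$.

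So each vertex can cost you a factor of $\val_Y(z_i)$, and the correct constant is a product of these (bounded by $(\#\partial Y)^{-L}$ with $L$ one more than the number of branch points of $\cA_Y$), not the single factor $1/V$. This is exactly how the paper proceeds: it re-applies Lemma~\ref{lem:2} at every vertex $v_i$ and chains the resulting inequalities to get $\Delta(s)\ge \deg_{x_0}f/\prod_i \val_Y(v_i)$. Your argument is easily repaired by doing the same and replacing $C=1/V$ with this product.
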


As an immediate consequence we obtain
\begin{corollary}
  \label{cor:1}
For any affinoid domain $Y \subset \aoneank$, there exists  a constant $C'>0$ such that for any analytic map $f : Y \to \aoneank$,  we have
$$
\sup_{x\in Y} \deg_x f \le C' \sup_{x \in \partial Y} \deg_x f~.$$
\end{corollary}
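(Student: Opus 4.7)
The plan is to observe that Corollary~\ref{cor:1} is a direct translation of item~(2) of Proposition~\ref{lem:3} into a global statement. The first step is to dispose of the trivial case when $f$ is constant: then $\deg_x f = 0$ everywhere and the inequality holds for any $C' > 0$. Otherwise, Proposition~\ref{lem:3} applies with some constant $C > 0$ depending only on $Y$, and I would set $C' := \max\{1, C^{-1}\}$.

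To establish the bound for a general $x_0 \in Y$, I would split into two cases according to whether $x_0$ lies in $\partial Y$ or in $\interior Y$. If $x_0 \in \partial Y$, then the inequality $\deg_{x_0} f \le \sup_{x \in \partial Y} \deg_x f \le C' \sup_{x \in \partial Y} \deg_x f$ is trivial. If instead $x_0 \in \interior Y$, Proposition~\ref{lem:3}(2) supplies a point $x' \in \partial Y$ with $\deg_{x'} f \ge C \deg_{x_0} f$, whence
$$\deg_{x_0} f \le C^{-1} \deg_{x'} f \le C' \sup_{x \in \partial Y} \deg_x f.$$
Taking the supremum over $x_0 \in Y$ yields the desired inequality.

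There is no genuine obstacle at this stage, since all the substantive work has already been performed upstream: the convexity of $\logdiam \circ f$ along segments avoiding vertices (Lemma~\ref{lem:1}), the existence of a fast direction carrying a proportional share of the local degree (Lemma~\ref{lem:2}), and the iteration along a fast arc until reaching $\partial Y$ that underlies Proposition~\ref{lem:3}. The corollary simply repackages item~(2) of that proposition as a maximum principle for local degrees with boundary values only.
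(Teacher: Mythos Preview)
Your proposal is correct and matches the paper's approach: the paper states Corollary~\ref{cor:1} as an immediate consequence of Proposition~\ref{lem:3} without further argument, and your derivation from item~(2) is exactly the intended one-line deduction. (Incidentally, the constant $C$ produced in the proof of Proposition~\ref{lem:3} already satisfies $C \le 1$, so $C^{-1} \ge 1$ and the $\max$ with $1$ is harmless but unnecessary.)
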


\begin{proof}[Proof of Proposition~\ref{lem:3}]
If a direction $D$ at a point $x \in \interior Y$ satisfies $\infty \in T_xf (D)$ then $D$ is necessarily determined by a point in $\partial Y$.  It is thus always possible to find a fast arc $[x_0, x']$ joining $x_0$ to a point $x' \in \partial Y$. 
Write $[x_0, x'] = [v_0, v_1] \cup  [v_1, v_2] \cup ... \cup  [v_\ell , x']$
such that $x_0=v_0$, and $v_i$ are branched points of the convex hull of $\cA_Y \cup \{ x_0 \}$, and the interior of each segment does not contain any other branched point of this finite tree.

Denote by $x_s$ the unique point of $[x_0, x']$ at hyperbolic length $s$ of $x_0$, and
by $D_s$ the direction at $x_s$ pointing towards $x'$. 
Recall that the slope $\Delta(s)$ of $s \mapsto \log \diam f (x_s)$ is equal to $\deg_{D_s} f$
which is in turn bounded from above by $\deg_{x_s} f$.

By Lemma~\ref{lem:2}, we have $\Delta(0) \ge \deg_{x_0} f/\val_Y(x_0)$, and by Lemma~\ref{lem:1}, $\Delta (s) \ge 
 \deg_{x_0} f /\val_Y(x_0)$ as long as $x_s \in [x_0, v_1)$. At $x_{s_1}:= v_1$, we apply again Lemma~\ref{lem:2} so that  
 $$\Delta(s_1) \ge \frac{\deg_{v_1} f}{\val_Y(v_1)} \ge \frac{\deg_{D_{v_1}(x_0)} f}{\val_Y(v_1)}
 \ge \frac{\deg_{x_0} f}{\val_Y(x_0)\, \val_Y(v_1)}~.$$
Iterating the argument we finally end up with the bound
$\Delta(s) \ge C \deg_{x_0}f$ for all $s$ with $C$ being the inverse of the product from $i=0$ to $\ell$ of $\val_Y(v_i)$. Note that this bound is uniform in $x$ since $\ell$ is bounded from above by the $1$ + the number of branched points of the skeleton of $Y$; and $\val_Y(x)$ is always less than $\# \partial Y$.

We conclude the proof noting that $\deg_{x'} f \ge \deg_{D_s} f$ 
for any $x_s \in [x_0, x']$ sufficiently close to $x'$.
\end{proof}


\subsection{The case of unbounded degree}
In this section we prove Theorem~\ref{thr:pointwise} in the case the local degree 
explodes. 
\begin{proposition}\label{prop:3}
Let $X \subset \aoneank$ be a basic open set. Consider a sequence of analytic maps  $f_n : X \to \aoneank$  such that 
$\limsup_n\diam f_n(x) < \infty$, for all $x \in X \cap \H$.
If $f_n(x_0)$ converges in $\aoneank$, and  $\deg_{x_0} f_n \to \infty$, for some $x_0 \in X$, then
$f_n$ converges pointwise to a constant. Moreover, for all $x \in X$, we have that $\diam f_n(x) \to 0$.
\end{proposition}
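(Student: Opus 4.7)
My plan is to apply Proposition~\ref{lem:3} at $x_0$ to produce a fast arc along which the slope of $\log\diam\circ f_n$ is at least $C\deg_{x_0}f_n\to+\infty$; the hypothesis that $\diam f_n$ is uniformly bounded at the endpoint of this arc will then force $\diam f_n(x_0)\to 0$, and an iterative version of the same argument propagates the decay to the rest of $X$.

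More concretely, I fix an affinoid $Y\subset X$ with $x_0\in\interior Y$ and $\partial Y\subset X\cap\H$, and set $M:=\sup\{\diam f_n(y):y\in\partial Y,\ n\ge 1\}$, which is finite by hypothesis. Proposition~\ref{lem:3} yields a constant $C>0$ depending only on $Y$ and, for each $n$, a point $x'_n\in\partial Y$ such that $[x_0,x'_n]$ is a fast arc along which the slope of $\log\diam\circ f_n$ is at least $C\deg_{x_0}f_n$. Since $\partial Y$ is finite, I may extract a subsequence so that $x'_n=x'$ is constant. Integrating the slope bound then gives
\[
\log\diam f_n(y)\;\le\;\log M-C\deg_{x_0}f_n\cdot d_\H(y,x')
\]
for every $y\in[x_0,x']$, whose right-hand side tends to $-\infty$. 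Hence $\diam f_n(y)\to 0$ for every $y\in[x_0,x')$; and since every subsequence admits a further subsequence giving the same conclusion at $x_0$, this holds for the full sequence, and $f_n(x_0)\to\xi$ for some $\xi\in\aoneank$ by hypothesis.

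To extend the conclusion to an arbitrary $x\in X$, I would iterate the argument with varying base points. The key observation is that the fast arc property in Proposition~\ref{lem:3} also gives $\deg_y f_n\ge C\deg_{x_0}f_n\to+\infty$ for every $y\in[x_0,x')$, so the same reasoning may be re-run with any such $y$ as the new base point inside a larger affinoid reaching further into $X$. An exhaustion of $X$ by affinoids, combined with a diagonal subsequence extraction, then yields $\diam f_n(x)\to 0$ for every $x\in X$; the convergence $f_n(x)\to\xi$ follows from the decaying diameters along the tree path $[x_0,x]$, the continuity of each $f_n$, and the hypothesis $f_n(X)\subset\aoneank$.

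The main obstacle I anticipate is executing the propagation coherently: the fast arcs produced at successive base points point in directions determined by the $f_n$, so a careful choice of affinoids together with a diagonal extraction is needed to ensure the decay eventually reaches every $x\in X$. A secondary subtle point is identifying the limit of $f_n(x)$ as the specific value $\xi$, which should follow from the fact that the images stay in $\aoneank$ and cannot escape towards $\infty$ along any segment.
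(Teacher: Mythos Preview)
Your first step is correct and is exactly the content of the paper's Lemma~\ref{lem:sorry-jan}: applying Proposition~\ref{lem:3} at $x_0$ inside an affinoid $Y$, integrating the slope bound backwards from $x'\in\partial Y$, and using the ``every subsequence has a further subsequence'' trick gives $\diam f_n(x_0)\to 0$ for the full sequence.

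The genuine gap is in the propagation. Fast arcs point in directions determined by the $f_n$ (towards whichever boundary point of $Y$ is sent towards $\infty$), not towards your chosen target $x$. Re-running the argument from a point $y\in[x_0,x')$ inside a larger affinoid produces another fast arc whose direction is again dictated by $f_n$; it may simply be the tail $[y,x']$ of the old one, or point to some other boundary point, but nothing forces it to get closer to $x$. Chaining fast arcs and enlarging affinoids therefore gives no control in the direction of an arbitrary $x\in X$, and the scheme stalls exactly at the obstacle you yourself flag. The paper's fix is to walk directly along the segment $[x_0,x]$ rather than along fast arcs: choose an affinoid $Y\supset\{x_0,x\}$, subdivide $[x_0,x]$ at the vertices $y_0=x_0,\dots,y_{N+1}=x$ of the convex hull of $\partial Y\cup\{x_0,x\}$, and prove by induction that $\diam f_n(y_i)\to 0$ and $f_n(y_i)\to z_0$. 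On each $[y_{i-1},y_i]$ the convexity Lemma~\ref{lem:1} applies (no interior vertices), so the minimum value over the segment is at most $\diam f_n(y_{i-1})\to 0$; from this minimum to $y_i$ the slope is bounded by $\deg_{y_i}f_n$, and one concludes either directly (if this degree stays bounded along a subsequence) or by re-applying your first step at $y_i$ (if it tends to infinity). The essential point is that the convexity lemma works on any segment with no interior vertices, in whatever direction $[x_0,x]$ happens to point, whereas the direction of a fast arc is not yours to choose.
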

Note that the limit function might be a constant in the hyperbolic space $\H$.
\begin{proof}
By Corollary~\ref{cor:1}, we may (and shall) assume $x_0 \in \H$, and write $z_0 = \lim f_n(x_0)$.
We begin with the following
\begin{lemma}\label{lem:sorry-jan}
Let $X \subset \aoneank$ be a basic open set.
Suppose $f_n : X \to \aoneank$ is a sequence of analytic functions such that 
$\limsup_n\diam f_n(x) < \infty$, for all $x \in X \cap \H$.  If $x_0 \in \H$ is a point such that $\deg_{x_0} f_n \to\infty$, then we have $\diam f_n(x_0) \to 0$. 
\end{lemma}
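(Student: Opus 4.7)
The plan is to leverage the fast-arc machinery of Proposition~\ref{lem:3} to exhibit a uniform rate at which $\logdiam f_n$ must grow from $x_0$ out to the boundary of a fixed affinoid neighborhood of $x_0$, and then play this against the hypothesis that $\diam f_n$ stays bounded at each boundary point.

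First I would fix an affinoid domain $Y$ with $x_0 \in \interior Y$, $Y \subset X$ and $\partial Y \subset \H$ --- one may take $Y$ to be a small closed ball centered at $x_0$ whose boundary point is of type II. Let $C>0$ be the constant produced by Proposition~\ref{lem:3} applied to $Y$; it depends only on $Y$. Since $\deg_{x_0} f_n \to \infty$, the map $f_n$ is non-constant on $Y$ for $n$ large, and Proposition~\ref{lem:3} then produces a point $x'_n \in \partial Y$ such that $[x_0, x'_n]$ is a fast arc for $f_n$ along which
\[
\deg_{D_x(x'_n)} T_x f_n \;\ge\; C\,\deg_{x_0} f_n \qquad \text{for all } x \in [x_0, x'_n).
\]

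Next I would invoke Lemma~\ref{lem:1}: the restriction of $\logdiam \circ f_n$ to $[x_0, x'_n]$ is piecewise affine, and by definition of fast direction the direction $D_x(x'_n)$ is mapped by $T_x f_n$ to the direction towards $\infty$ at $f_n(x)$. Hence $\logdiam \circ f_n$ is strictly increasing along the arc, with slope at each smooth point equal to the local degree $\deg_{D_x(x'_n)} T_x f_n$. Integrating the lower bound above then yields
\[
\logdiam f_n(x'_n) - \logdiam f_n(x_0) \;\ge\; C\,\deg_{x_0} f_n \cdot d_\H(x_0, x'_n).
\]

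To conclude I would observe that $\delta := \min_{x\in\partial Y} d_\H(x_0, x) > 0$ (since $x_0 \in \interior Y$ and $\partial Y$ is a finite subset of $\H$), and that applying the hypothesis $\limsup_n \diam f_n(x) < \infty$ at each of the finitely many points of $\partial Y$ gives a finite bound $M := \sup_n \max_{x\in\partial Y} \logdiam f_n(x)$. Combining these estimates,
\[
\logdiam f_n(x_0) \;\le\; M - C\delta\,\deg_{x_0} f_n \;\longrightarrow\; -\infty,
\]
which is exactly $\diam f_n(x_0) \to 0$. The most delicate step is the second one, where one must verify that along a fast arc the slope of $\logdiam \circ f_n$ is given, with the correct positive sign, by the local degree in the fast direction; this rests on the defining property $\infty \in T_x f_n(D_x(x'_n))$ of a fast direction together with the fact that, in the direction of $\infty$, hyperbolic displacement in $\aoneank$ is measured by the variation of $\logdiam$.
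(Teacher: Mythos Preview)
Your proof is correct and follows essentially the same route as the paper's: fix an affinoid neighborhood $Y$ of $x_0$, apply Proposition~\ref{lem:3} to obtain a fast arc to $\partial Y$ along which $\logdiam f_n$ grows at rate at least $C\deg_{x_0}f_n$, and compare with the boundedness of $\diam f_n$ at the (finitely many) boundary points. The only cosmetic difference is that the paper argues by contradiction and passes to a subsequence so that the fast arc lands at a fixed $x'\in\partial Y$, whereas you keep the direct argument and absorb the variation of $x'_n$ into the uniform constants $\delta$ and $M$.
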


Now pick any $x \in X \cap \H$, and  consider an affinoid domain $Y \subset X$ containing both $x_0$ and $x$. Let $\cA$ be the convex hull of $\partial Y \cup \{ x, x_0\}$. Denote by $y_0 := x_0, y_1,\dots ,  y_{N+1} := x$ the consecutive vertices of $\cA$ lying on $[x_0, x]$ so that 
$[x_0, x] = \cup_{i=0}^N [y_i, y_{i+1}]$. We shall prove by induction that $\diam f_n(y_i) \to 0$ and $\lim f_n(y_i) = z_0$. There is nothing to prove for $i=0$. Suppose $\diam f_n(y_{i-1}) \to 0$.

By Lemma~\ref{lem:1} the function  $\logdiam \circ  f_n$ is convex and non constant on $[y_{i-1}, y_i]$. 
Hence there exists a unique $c_n \in  [y_{i-1}, y_i]$ such that 
$\diam f_n (c_n) \leq \diam f_n (y)$ for all $y \in
 [y_{i-1}, y_i]$. In particular $ \diam f_n (c_n) \leq \diam f_n (y_{i-1}) \to 0$ by the induction hypothesis. Since $\logdiam $ is increasing on the segment $[c_n, y_i]$, and convex with slope equal to the local degree,  we conclude that $\logdiam f_n (y_i)  \le \logdiam f_n(c_n) +  d_\H(c_n, y_i) \cdot \deg_{y_i} f_n$. 
By contradiction, assume that there exists $\varepsilon > 0$ and a subsequence
$f_{n}$ such that $\diam f_{n}(y_i) > \varepsilon$.
If, passing to a further subsequence, $\deg_{y_i} f_n$ is bounded, then we get $\diam f_n (y_i) \to 0$. Otherwise,  $\deg_{y_i} f_n \to \infty$ and  
Lemma~\ref{lem:sorry-jan} also shows 
that  $\diam f_n (y_i) \to 0$, which is impossible. Hence $\diam f_n (y_i) \to 0$.

To show that $f_n(y_i) \to z_0$, we pick a basic open set $V$ containing 
$z_0$.  We must show that for $n$ sufficiently large $f_n(y_i) \in V$.
There exists $r >0$ such that, if $y \in V$ and $\diam y \le r$,
then the closed ball $B_r(y) \subset \aoneank$ with diameter $r$
containing $y$ is contained in $V$.
Hence, if $f_n(y_{i-1}) \in V$ and $\diam f_n (y_{i-1}) \le r$, then $f_n(c_n) \in  V$.
Since the diameter is increasing from $f_n(c_n)$ to $f_n(y_i)$, and the latter is at most $r$,
for $n$ sufficiently large, we also have that $f_n(y_i) \in V$.
Therefore, $f_n(y_i) \to z_0$.  

Now for any $x \in X (k)$, take a small ball $B$ containing $x$ with boundary point $x_B \in X$. 
Thus, $f_n(x_B) \to z_0$ and it follows that $f_n(x)$ also converges to $z_0$.
\end{proof}

\begin{proof}[Proof of Lemma~\ref{lem:sorry-jan}]
Consider an  affinoid domain $Y \subset X$ which contains $x_0$ in its interior.
Suppose by contradiction that $\limsup \diam f_n(x_0) >0$.
Passing to a subsequence we may assume that $\diam f_n (x_0) \ge \varepsilon >0$,
and for some $x' \in \partial Y$, 
the arc $[x_0,x']$ satisfies the conditions of Proposition~\ref{lem:3} for all  $f_n$'s. 
Since $[x_0, x']$ is a fast arc, the function $f$ is injective and increasing to infinity.
From the third condition  of Proposition~\ref{lem:3}, we conclude that $\deg_{x} f_n \ge C \, \deg_{x_0} f_n$ for all  $x \in [x_0,x']$.
Therefore,
$\log \diam f_n(x') \ge \log \diam f_n (x_0) + C \, d_\H(x',x_0) \, \deg_{x_0} f_n  \longrightarrow \infty$,
which contradicts the hypothesis of the Lemma.
\end{proof}


\subsection{Proof of Theorem~\ref{thr:pointwise}}
We may always assume $f$ to be non constant. Pick $\bar{x}\in X$, and $Y$ an affinoid neighborhood of $\bar{x}$.  We shall prove that $f|_Y$ is continuous and $f(Y \cap \H) \subset \H$.

If $\sup_Y \deg_x f_n \to \infty$, then $\deg_x f_n \to \infty$ for some $x \in \partial Y$ by Corollary~\ref{cor:1}. This is excluded by Proposition~\ref{prop:3} and our standing assumption. Whence 
\begin{equation}\label{eq:bddeg}
\sup_{x\in Y} \deg_x f_n  \le D
\end{equation} for some $D\ge0$. This bound is not sufficient for our purposes. But one can prove:
\begin{equation}\label{eq:bd}
\sup_{x\in \aoneank} \sup_{n \ge0} \# f_n^{-1} (x) \cap Y \le D\cdot \# \partial Y~.
\end{equation}
Indeed, if $f_n : Y \to f_n(Y)$ is a proper map then $\displaystyle{d(y) : = \sum_{x\in f_n^{-1}(y)\cap Y} \deg_x f_n}$ is constant
on $f(Y)$. Noting that $f_n^{-1} (f_n (\partial Y)) \subset \partial Y$,  and computing $d(y)$  at a point in $f_n(\partial Y)$, we get
$\# f_n^{-1} (x) \cap Y \le d(y) \le D\cdot \# \,\partial Y$. When $f_n$ is not proper onto its image, we can only conclude that  the maximum of $y \mapsto d(y)$ is attained at a point in $f_n(\partial Y)$. But this is sufficient to get the bound in~\eqref{eq:bd}.
\begin{lemma}
  \label{pro:2}
The function $f$ is sequentially continuous.
\end{lemma}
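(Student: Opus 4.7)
Plan. My strategy is a proof by contradiction using the sequential compactness of $\poneank$ and the uniform Lipschitz bound implied by~\eqref{eq:bddeg}. Suppose $x_m \to x$ in $Y$ but $f(x_m) \not\to f(x)$: by sequential compactness, pass to a subsequence with $f(x_m) \to y$ for some $y \neq f(x)$. A diagonal argument using the pointwise convergence $f_n \to f$ then yields an increasing sequence $n_m$ such that $f_{n_m}(x) \to f(x)$ and $f_{n_m}(x_m) \to y$; the task is to derive a contradiction.

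The main tool is the following uniform Lipschitz bound: on any segment $I \subset Y \cap \H$, the restriction $f_n|_I$ is piecewise-affine in hyperbolic parametrization with slopes equal to the local directional degrees, hence bounded above by $D$ by~\eqref{eq:bddeg}. Consequently $d_\H(f_n(a), f_n(b)) \le D \cdot d_\H(a, b)$ uniformly in $n$ whenever $a, b$ lie on a common segment contained in $Y \cap \H$.

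Using the tree structure of $Y$, I reduce via further extraction to one of three configurations: (a) the $x_m$'s lie on a common segment in $Y \cap \H$ through $x$ with $d_\H(x_m, x) \to 0$; (b) $x \in Y \cap \H$ is a type II point and the $x_m$'s lie in pairwise distinct directions at $x$; or (c) $x$ is rigid and the $x_m$'s lie in a shrinking nested sequence of open balls containing $x$. In case (a), the Lipschitz bound gives $d_\H(f_{n_m}(x_m), f_{n_m}(x)) \to 0$, and since $f_{n_m}(x) \to f(x) \in \H$ (or one reduces to this), the sequences $f_{n_m}(x_m)$ and $f_{n_m}(x)$ share the same Berkovich limit, forcing $y = f(x)$, a contradiction. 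Case (c) reduces to (a) by approximating $x$ with the type II boundary points of the shrinking balls and using that analytic maps send small balls to balls of controlled diameter.

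The main obstacle is case (b): Berkovich convergence at a type II point is strictly weaker than hyperbolic convergence, so the $x_m$ may sit at hyperbolic distance bounded below, or even diverging, from $x$, and the Lipschitz bound on the segments $[x, x_m]$ no longer suffices on its own. Here the preimage bound~\eqref{eq:bd} plays the crucial role: combined with a Riemann--Hurwitz count bounding the number of critical points of each $f_n|_Y$ (in terms of $D$), it restricts the possible ``folding'' behavior of $f_{n_m}$ along these long segments, so that the image arcs $f_{n_m}([x, x_m])$ cannot accumulate in infinitely many Berkovich-distinct ways while starting near $f(x)$. A careful case analysis, splitting according to whether $d_\H(f_{n_m}(x_m), f_{n_m}(x))$ stays bounded or tends to infinity, then forces the Berkovich limit of $f_{n_m}(x_m)$ to coincide with $f(x)$, contradicting $y \neq f(x)$.
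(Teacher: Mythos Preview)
Your proposal has a genuine gap: case~(b) is precisely the heart of the matter, and you provide no argument for it. The phrases ``a Riemann--Hurwitz count bounding the number of critical points'' and ``a careful case analysis\dots then forces the Berkovich limit'' are not proofs; it is entirely unclear how a bound on critical points of each $f_{n_m}$ would prevent the images $f_{n_m}(x_m)$ from accumulating at some $y\neq f(x)$ when the $x_m$ sit in pairwise distinct directions at~$x$. Moreover, by passing to a diagonal subsequence $f_{n_m}$ you lose the one tool that is actually available: the preimage bound~\eqref{eq:bd} is a \emph{per-map} statement, so once you use a different $f_{n_m}$ for each $m$ there is no finiteness to exploit.

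Your trichotomy is also incomplete. Even when $x$ is a type~II point and all $x_m$ lie in a \emph{single} direction $D$ at $x$, one can still have $d_\H(x_m,x)\not\to0$ (the $x_m$ may branch off the segment $[x,\cdot]$ at points approaching $x$ while remaining at bounded hyperbolic depth), so this situation is covered by none of (a), (b), (c). Types~III and~IV are likewise not handled.

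By contrast, the paper's proof avoids any case split on the type of~$\bar{x}$. It keeps the indices $m$ and $n$ \emph{decoupled}: one picks a boundary point $z_B$ separating $f(\bar{x})$ from the values $f(x_m)$, fixes a single index $n_0$, and uses~\eqref{eq:bd} to conclude that the crossing points $y_m(n_0)\in f_{n_0}^{-1}(z_B)\cap[x_m,\bar{x}]$ take only finitely many values. A subsequence then yields a single point $y\in(x_m,\bar{x})$ at hyperbolic distance $>\varepsilon$ from $\bar{x}$ for infinitely many $m$; since basic open sets are convex, $x_m\to\bar{x}$ forces $y=\bar{x}$, a contradiction. The decisive idea you are missing is to work with a \emph{fixed} $f_{n_0}$ so that the finiteness of preimages pins down a common obstruction point~$y$.
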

We give a proof of this fact thereafter. Let us prove that $f$ is then continuous at $\bar{x}$.
We proceed by contradiction. That is, there 
exists a basic open set $V$ containing $f(\bar{x})$ such that for all open sets 
$U \subset Y$ that contain $\bar{x}$ there exists $x_U \in U$ such that $f(x_U) \notin V$. 
  It is sufficient to  construct a sequence $\{ x_m \}_{m\ge1} \subset X$ converging to $\bar{x}$ such that $f(x_m) \notin V$ for all $m \geq 1$. We proceed inductively. 
Let $U_1=\mathrm{int}\, Y$ and  $x_1 = x_{U_1}$. 
We may assume that $x_m$ and $U_m$ have already been constructed.
If $x_m$ is not in the direction
of  infinity at $\bar{x}$, then let $B_m$ be a closed ball  containing $x_m$ and not containing $\bar{x}$ such that $\diam \bar{x} - \diam B_m <1/m$.
Otherwise let $D_m$ be an open ball containing $\bar{x}$ and not containing $x_m$ such that $\diam D_m - \diam \bar{x} < 1/m$. In the former case we let $U_{m+1} = U_m \setminus B_m$ and in the latter $U_{m+1} = U_m \cap D_m$. Now let $x_{m+1} = x_{U_{m+1}}$. 
Passing to a subsequence, the directions $\{ D_{\bar{x}}(x_m) \}$ 
are either pairwise distinct or coincide for all $m$. In both cases,  we get
$x_m \to \bar{x}$ by construction.  
This shows $f$ is continuous.

Now since $f$ is continuous and not constant on $Y$, one can find a point $x \in Y \cap \H$ whose image $f(x)$ belongs to $\H$. Pick any other $x' \in Y\cap \H$. Then 
$$
d_\H (f(x), f(x'))
\le \liminf_n 
d_\H (f_n(x), f_n(x'))
\le D \, d_\H ( x, x')<\infty~.$$
We have thus proved $f(Y\cap \H) \subset \H$. This concludes the proof of Theorem~\ref{thr:pointwise}.

\begin{proof}[Proof of Lemma~\ref{pro:2}]
We proceed by contradiction. Assume that $f$ is sequentially discontinuous at $\bar{x}$.
Then there exists a basic open set $V$ containing $f(\bar{x})$ and a sequence $x_m \to \bar{x}$ 
such that $f(x_m) \notin V$ for all $m$.

Without loss of generality,  for all $m$ we have that $x_m \in Y$ and $f(x_m) \in B$ where $B$ is a connected component of $\aoneank \setminus V$,  and $f_n(\bar{x}) \in V$ for all $n$.
Denote by $z_B$ the boundary point of $B$.
Let $y_{m}(n)$ be the closest preimage of $z_B$ to $\bar{x}$ under the map $f_n:[x_m,\bar{x}] \to
\aoneank$. Then, $d_\H (\bar{x}, y_m(n)) \ge d_\H (f_n(\bar{x}), z_B)/ D$. Observe that $\liminf  d_\H (f_n(\bar{x}), z_B) > 0$ since  closed hyperbolic balls are closed in the topology of $\aoneank$.
Hence, there exists $\varepsilon > 0$ such that $d_\H (\bar{x}, y_m(n)) > \varepsilon$. Fix $n=n_0$, and observe that $\{ y_m(n_0)\}_{m \in \N}$ has 
at most $D$ elements. 
Passing to a subsequence of $x_m$, we may assume  $y_m(n_0) =y$ is a fixed point in $\H$.
But $x_m \to \bar{x}$ in $\aoneank$, and $y \in (x_m,\bar{x})$ so that $d_\H(y,\bar{x}) =0$, a contradiction. 
\end{proof}

\begin{remark}
The argument above to obtain continuity from sequential continuity can be generalized as follows.
Let $X$ be any affinoid domain (of arbitrary dimension), $S$ any topological space, and let $f : X \to S$ be any sequentially continuous map. Then $f$ is continuous.

\medskip
Indeed pick any $\bar{x} \in X$, and any open subset $V \ni f(\bar{x})$. Suppose by contradiction that  $\bar{x}$ lies in the closure of $\mathcal{B} = \{ x \neq \bar{x}, \, f(x) \notin V \}$. Since $X$ is an angelic space (see~\cite{poineau:sequential}), one can find a sequence  $x_n \in \mathcal{B}$ such that $x_n \to \bar{x}$, which is impossible by assumption.
\end{remark}

%
%

\section{Family of analytic functions with bounded local degree}\label{sec:bdd-fam}

In this section, we give a proof of our first main result Theorem~A.
To do so, we first deal with the case of analytic maps with values in an affinoid domain, and we prove the following key result.
\begin{theorem}\label{thm:extract-affinoid}
Let $U$ be a basic open set and $Y \subsetneq \poneank$ be an affinoid domain.
Then any sequence of analytic functions $f_n: U \to Y$ 
admits a subsequence that is pointwise converging on $U$ to a continuous function.
\end{theorem}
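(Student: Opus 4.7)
The plan is to reduce the target $Y$ to a closed ball in $\aoneank$, exhaust $U$ by nested affinoid subdomains, approximate each $f_n$ on these subdomains by the truncation of its Mittag--Leffler expansion, and extract a convergent subsequence using Poineau's sequential compactness of closed Berkovich polydisks in $\mathbb{A}^{N,\mathrm{an}}_k$. Continuity of the pointwise limit will then follow at once from Theorem~\ref{thr:pointwise}. For the reduction, observe that $\poneank\setminus Y$ contains an open ball, and a M\"obius transformation on the target sending this ball to a neighborhood of $\infty$ places $Y$ inside a closed ball $\{|w|\le R\}\subset \aoneank$; a similar coordinate change on the source places $U$ as a basic open set of $\aoneank$.

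Choose an exhaustion $V_1\subset V_2\subset\cdots$ of $U$ by connected affinoid subdomains such that $V_j$ is contained in the interior of $V_{j+1}$ and $\bigcup_j V_j = U$. After a further change of affine coordinate, each $V_j$ is a closed disk with finitely many open subdisks removed, and any analytic function on it admits a Mittag--Leffler decomposition as a Taylor series at the center plus principal parts at the excluded disks. The bound $\|f_n\|_{V_j}\le R$ forces all these coefficients to lie in a fixed closed ball of $k$. Crucially, since $V_j$ sits strictly inside $V_{j+1}$, there exists $\rho_j<1$ such that on $V_j$ the tail of the Mittag--Leffler expansion of $f_n|_{V_{j+1}}$ past index $d$ is bounded by $R\rho_j^d$, uniformly in $n$. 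For each pair $(j,d)$, the $N(j,d)$-tuple of leading Mittag--Leffler coefficients of $f_n|_{V_{j+1}}$ defines a point in a closed polydisk of $\mathbb{A}^{N(j,d),\mathrm{an}}_k$; applying Poineau's sequential compactness diagonally over $(j,d)$, we extract a single subsequence $\{f_{n_k}\}$ such that all these coefficient tuples converge in the corresponding Berkovich affine spaces.

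Evaluation of the Mittag--Leffler truncation at a fixed $z \in V_j$ is a continuous function of the coefficients viewed in $\mathbb{A}^{N,\mathrm{an}}_k$, so the truncations of $f_{n_k}$ evaluated at $z$ converge in $\aoneank$. Together with the uniform tail bound $R\rho_j^d$, this yields pointwise convergence of $f_{n_k}(z)$ to some value $f(z)\in\aoneank$ for every $z\in V_j$, hence for every $z\in U$. Since $f$ takes its values in $\aoneank$, Theorem~\ref{thr:pointwise} then gives the continuity of $f$, concluding the proof. The main obstacle lies in the extraction step: the limits of the coefficient tuples in $\mathbb{A}^{N,\mathrm{an}}_k$ are typically not classical points of $k^N$, so one must argue that evaluation at an arbitrary Berkovich point $z\in V_j$ still commutes with the extraction. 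This is precisely where Poineau's sequential compactness in every dimension (rather than only for $\mathbb{A}^{1,\mathrm{an}}_k$) is indispensable, together with the continuity of polynomial and rational evaluation viewed as morphisms of Berkovich analytic spaces.
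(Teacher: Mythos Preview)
Your overall strategy---reduce to a closed ball in $\aoneank$, exhaust $U$ by affinoids, truncate Mittag--Leffler expansions, extract via Poineau, and invoke Theorem~\ref{thr:pointwise} for continuity---is exactly the paper's approach in the main case. The exhaustion and the uniform tail bound $R\rho_j^d$ are both correct.

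However, the step you yourself flag as ``the main obstacle'' is a genuine gap. You assert that evaluation of a fixed-pole rational truncation at a fixed Berkovich point $z\in V_j$ is continuous as a function of the coefficient tuple in $\mathbb{A}^{N,\mathrm{an}}_k$, citing ``continuity of polynomial and rational evaluation viewed as morphisms of Berkovich analytic spaces.'' This justification does not work as stated: the relevant evaluation map is a morphism $\mathbb{A}^N_k\times\mathbb{A}^1_k\to\mathbb{A}^1_k$, whose analytification has source $(\mathbb{A}^N_k\times\mathbb{A}^1_k)^{\mathrm{an}}$, and this is \emph{not} the set-theoretic product $\mathbb{A}^{N,\mathrm{an}}_k\times\mathbb{A}^{1,\mathrm{an}}_k$. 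There is no canonical way to pair a non-rigid limit $\alpha\in\mathbb{A}^{N,\mathrm{an}}_k$ with a non-rigid $z\in\mathbb{A}^{1,\mathrm{an}}_k$ and land in the domain of that morphism. The paper isolates exactly this statement as Proposition~\ref{prop:rational} and proves it by hand: for rigid $z$ it is indeed linear in the coefficients; for type~II $z$ one must exploit the \emph{bounded degree} of the truncation to compute $\diam P_n(z)$ as a maximum of finitely many quantities $|P_n(\zeta_i)-P_n(\zeta_j)|$ over carefully chosen rigid points; type~III/IV points are then handled by a Lipschitz argument along segments. None of this is automatic.

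A second, related gap: once you know the truncations $P_n^d(z)$ converge, you still need to pass to $f_n(z)$ at a non-rigid $z$. The sup-norm bound $\|f_n-P_n^d\|_{V_j}\le R\rho_j^d$ only controls values at rigid points; at $z\in\H$ it gives $f_n(z)=P_n^d(z)$ provided $\diam f_n(z)>R\rho_j^d$, but says nothing if $\diam f_n(z)\to 0$. The paper avoids this by first splitting off two degenerate cases---unbounded local degree (Proposition~\ref{prop:3}) and $\diam f_n(x_0)\to 0$ for some $x_0$---both of which force convergence to a constant, and only then running the Mittag--Leffler argument under the standing assumption $\inf_n\diam f_n(x)>0$ for every $x\in X\cap\H$. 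Your outline does not separate these cases, and without that separation the passage from truncations to $f_n$ is incomplete on $\H$.

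Finally, the paper also reduces to the algebraically closed case first and descends at the end via the Galois action; you should note where this enters, since the classification of Berkovich points used in Proposition~\ref{prop:rational} assumes $k$ algebraically closed.
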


The above result is false if $U$ is assumed to be an affinoid domain.
In fact,  let $B \subset \aoneank$ be the closed unit ball containing $z=0$
and consider $f_n: B \to B$ defined by $f_n(z) = z^n$. It follows that
any pointwise convergent subsequence has a limit $f$ fixing the Gauss point $x_g$ and $f$ is constant equal to $0$ in the open unit ball containing $z=0$ (compare with Section~\ref{sec:NotC0}).

\subsection{Family of rational functions}

Let us first analyze the special case  of families of rational maps
with fixed poles and uniformly bounded degree.

\begin{proposition}\label{prop:rational}
Suppose $k$ is algebraically closed and choose $z_1,\dots,z_p \in k$.
Consider a sequence, indexed by $n \ge 0$, 
$$(a^{(n)}_l, a^{(n)}_{i,j}) \in k^{d+1}\times k^{pd},$$  
where $0 \le l \le d$, $ 1 \le i \le d$,  $1 \le j \le p$.

If  $(a^{(n)}_l, a^{(n)}_{i,j})$ converges to a point in $\mathbb{A}^{d+1+pd,an}_k$, then
 the function $$P_n(z) = 
\sum_{l=0}^d a^{(n)}_l z^l  +
\sum_{j=1}^p \sum_{i=1}^d  \frac{a^{(n)}_{i,j}}{(z-z_j)^i}$$ converges pointwise to a continuous function $P: \aoneank \setminus\{ z_1,\dots, z_p \} \to \aoneank$.
\end{proposition}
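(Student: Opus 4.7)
My strategy is to first establish pointwise convergence $P_n(z) \to P_\infty(z) \in \aoneank$ for every $z$ in the basic open set $U := \aoneank\setminus\{z_1,\ldots,z_p\}\subset \poneank$, and then apply Theorem~\ref{thr:pointwise} to conclude that $P_\infty$ is continuous. Recall that a point of $\aoneank$ is a multiplicative seminorm on $k[S]$ and convergence in $\aoneank$ is pointwise convergence of seminorms. Thus, for fixed $z \in U$, it suffices to show that $|Q(P_n(T))|_z$ converges in $\R$ for every $Q \in k[S]$, with limit multiplicative in $Q$.

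Write $Q(P_n(T)) = A_n(T)/B(T)$ where $B(T) = \prod (T-z_j)^{M_j}$ is a fixed polynomial ($M_j$ depending only on $Q$) and $A_n \in k[T]$ is a polynomial whose coefficients are fixed polynomial expressions $H_l(a^{(n)})$ in the coefficient vector, with $H_l \in k[X_0,\ldots,Y_{d,p}]$ independent of $n$. Then $|Q(P_n)|_z = |A_n|_z/|B|_z$ with $|B|_z > 0$ a constant independent of $n$ (since $z \neq z_j$). For $z$ of type~II or~III, pick $y\in k$ representing its associated ball $B_r(y)$ (always possible since $k$ is infinite) and re-expand $A_n = \sum_l \tilde H_l(a^{(n)}) (T-y)^l$, where the $\tilde H_l$ are obtained from the $H_l$ by a $k$-linear change; the Gauss-norm identity then gives
\[ |A_n|_z = \max_l |\tilde H_l(a^{(n)})|\,r^l. \]
The hypothesis $a^{(n)} \to a^{(\infty)}$ in $\mathbb{A}^{d+1+pd,\mathrm{an}}_k$ translates into $|F(a^{(n)})| \to |F|_{a^{(\infty)}}$ for every polynomial $F$, so each $|\tilde H_l(a^{(n)})|$ converges, hence so do $|A_n|_z$ and $|Q(P_n)|_z$. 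Type~I (rigid) $z$ is the even simpler case $|A_n|_z = |A_n(z)| = |L(a^{(n)})|$ for a single polynomial $L$.

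For $z$ of type~IV, I plan to approximate $z$ by a decreasing nested sequence $z^{(m)}\searrow z$ of type~II/III points. For each fixed $n$, $|A_n|_{z^{(m)}}\searrow |A_n|_z$ by general properties of the seminorm; for each fixed $m$, the previous paragraph gives $|A_n|_{z^{(m)}}\to g_m \in \R$ as $n\to\infty$, with $(g_m)_m$ decreasing. A careful interchange of these monotone limits, using the uniform bound $\deg A_n \le D$ independent of $n$ together with the explicit formulas $|A_n|_{z^{(m)}} = \max_l |\tilde H_l^{(m)}(a^{(n)})| r_m^l$, should yield $\lim_n |A_n|_z = \inf_m g_m$. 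Multiplicativity of $Q \mapsto \lim_n |Q(P_n)|_z$ is inherited from the multiplicativity of each $|\cdot|_{P_n(z)}$, and finiteness follows from boundedness of the coefficient seminorms at $a^{(\infty)}$; this defines $P_\infty(z) \in \aoneank$. Finally, with the analytic sequence $(P_n)$ converging pointwise to $P_\infty: U \to \aoneank$ on the basic open set $U \subset \poneank$, Theorem~\ref{thr:pointwise} delivers the continuity of $P_\infty$. The main technical obstacle I anticipate is the type~IV case, where the double-limit interchange must be handled carefully within the elementary framework used here.
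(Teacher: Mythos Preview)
Your proposal is correct. The overall architecture---prove pointwise convergence of $P_n(z)$ by reducing to convergence of polynomial expressions in the coefficient vector, then invoke Theorem~\ref{thr:pointwise} for continuity---matches the paper's. The type~I case is identical. For types~II and~III, however, your direct Gauss-norm computation is cleaner than the paper's argument: the paper handles type~II by choosing $p(d+1)+2$ auxiliary rigid points $\zeta_i$ in the ball and expressing $\diam P_n(z)$ as $\max_{i\neq j}|P_n(\zeta_i)-P_n(\zeta_j)|$, while you simply expand $A_n$ about a center $y$ and read off $|A_n|_z=\max_l |\tilde H_l(a^{(n)})|\,r^l$. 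Since type~III points also have centers in $k$, your argument covers them without modification, whereas the paper groups type~III with type~IV.

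For type~IV, the paper takes a segment $I\subset\H$ through $z$, uses the uniform degree bound to make $s\mapsto\log\diam P_n(z_s)$ uniformly Lipschitz, and passes to the limit via equicontinuity and density of type~II points. Your double-limit proposal is essentially the same mechanism in different clothing: the inequality $|A_n|_z\le |A_n|_{z^{(m)}}\le (r_m/r)^D\,|A_n|_z$ (with $r=\diam z>0$), which follows immediately from the Gauss-norm formula and $\deg A_n\le D$, says exactly that the convergence $|A_n|_{z^{(m)}}\searrow |A_n|_z$ is \emph{uniform in $n$}, so the interchange $\lim_n\lim_m=\lim_m\lim_n$ is legitimate. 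You should state this inequality explicitly rather than leaving the interchange as ``should yield''; once you do, the type~IV case is no harder than the others.
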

Recall that $\mathbb{A}^{d+1+pd,an}_k$ denotes the set of multiplicative semi-norms on the 
ring of polynomials in $d+1+pd$ variables with coefficients in $k$ that restricts to the given norm on $k$. 
\begin{proof}
We only need to show that $ P_n(z)$ converges in $\aoneank$ for any $z \in \aoneank\setminus\{ z_1, ... , z_p \} $.
Then  the fact that $P = \lim_n P_n$ is a continuous function will follow from Theorem~\ref{thr:pointwise}.

Write $\alpha = \lim_n (a^{(n)}_l, a^{(n)}_{i,j})$.
We consider first the case of a rigid point $z \in k$. For any $w \in k$, define the following polynomial in $d+1+pd$ variables:
$\phi_w(T_l, T_{i,j}) = 
\sum_{l=0}^d T_l z^l  +
\sum_{j=1}^p \sum_{i=1}^d  \frac{T_{i,j}}{(z-z_j)^i} -w$. Then 
$$
|P_n(z) -w| =
|\phi_w ( a^{(n)}_l,a^{(n)}_{i,j})|  \to |\phi_w( \alpha)| ~.
$$
Since $k$ is algebraically closed, $ P_n(z)$ converges in $k$.

Next we consider the case of a type II point $z$.
Recall that, given $\zeta$ such that $|z - \zeta| \leq \diam(z)$,
for all $w \in k$:
\begin{equation}\label{eq:convenient}
|z-w| = \max \{ \diam(z), |\zeta -w| \}. 
\end{equation}

In order to prove that $P_n(z)$ converges, we need to show that
$|P_n(z) - w|$ converges for all $w \in k$. In fact,
if $\zeta' \in k$ is such that $|z - \zeta'| \leq \diam(z)$ and $|\zeta'-z_j| \ge \diam(z)$ for all $j$, then $|P_n(\zeta') - P_n(z)| \le \diam P_n(z)$.
Since $|P_n(\zeta') -w|$ is convergent, taking $\zeta = P_n(\zeta')$ in~\eqref{eq:convenient}, we just need to prove that $\diam P_n(z)$ converges.
To show that $\diam P_n(z)$ converges, choose $\zeta_0,\dots , \zeta_{p(d+1)+1} \in k$
such that
\begin{itemize}
\item
$|\zeta_i - z| = \diam(z)$ for all $i$;
\item
$|\zeta_i - \zeta_j| = \diam(z)$ for all $i\neq j$;
 \item
$|\zeta_i - z_j| \ge \diam(z)$ for all $i,j$.
\end{itemize}
Since the degree of $P_n$ is bounded by $p(d+1)$, at least two different directions
determined by the $\zeta_i$'s at $z$ are mapped to distinct directions at $P_n(z)$.
Hence the three conditions above ensure 
$$
\diam P_n(z) = \max \{ |P_n(\zeta_i) - P_n(\zeta_j)|, \, i \neq j\}~,
$$
which is convergent by the same argument as above.

Finally if $z \in \aoneank$ is a point of type III or IV, we pick a segment $I \subset \H$
of positive and bounded length, containing $z$.
Denote by $z_s$ the unique point in $I$ at hyperbolic distance $s$ from $z$.
By Lemma~\ref{lem:sorry-jan}, the function $\delta_n(s) := \log \diam P_n(z_s)$ is  Lipschitz for the hyperbolic metric with Lipschitz constant $\le d$, hence forms a family of equicontinuous functions. Since type II points are dense on any non trivial segment of $\H$, we know that $\delta_n(s)$ is converging pointwise on a dense set of $I$. Whence $\delta_n(s)$ converges on $I$ to a continuous function (possibly $\equiv - \infty$).

This concludes the proof.
\end{proof}


\subsection{Proof of Theorem~\ref{thm:extract-affinoid}}
Since the range of all maps is included in an affinoid domain, any pointwise limit is necessarily continuous by Theorem~\ref{thr:pointwise}.
We only have to prove the existence of a subsequence that converges pointwise.

Let us first prove the theorem under the assumption that $k$ is algebraically closed.

Without loss of generality we may assume $Y$ is the unit ball.
If  $\deg_{x_0} f_n$ is unbounded for some $x_0 \in U$, then after passing to a subsequence
$f_n(x_0)$ is converging in $Y$, and we may apply Proposition~\ref{prop:3}.
We conclude that $f_n$ converges pointwise to $\lim f_n(x_0)$.

From now on, we suppose that $\deg_x f_n$ is bounded for all $x \in U$.
Consider an affinoid domain $X \subset U$. Since $U$ is the countable union of 
affinoid domains, a diagonal argument shows that it is sufficient to establish the pointwise convergence of an appropriate subsequence in $X$.

By Proposition~\ref{lem:3}, there exists $D \geq 0$ such that $\deg_x f_n \le D$  for all $x \in X$ and for all $n$.
Extracting a subsequence,
we may assume that either there exists $x_0 \in X \cap \H$ such that 
$\lim_n \diam f_n(x_0) =0$ or, 
$\inf_n \diam (f_n(x)) >0$ for all $x \in X \cap \H$.

In the first case, since the local degree are uniformly bounded on $X$, we get 
$f_n(x) \to f(x_0)$ for all $x \in X \cap \H$.  
Since balls of sufficiently small diameter are mapped onto balls, $f_n (x) \to f(x_0)$ for all $x \in X$.

In the second case, 
we use the Mittag-Leffler decomposition on affinoid domains of the affine line, see~\cite[p.7]{fresnel}.
For any $d$ write $f_n = P^d_n + R^d_n$ with $P^d_n$ a rational map
of degree $\le d$ and $\sup_{X(k)} |R^d_n| \le \eta_d$ for  a sequence $\eta_d \to 0$ as $d \to\infty$. 
Since affinoid domains are sequentially compact by~\cite{poineau:sequential}, using a diagonal extraction argument we may assume that for each $d$ the coefficients of $P^d_n$ converge in the Berkovich affine space of the suitable dimension.  Proposition~\ref{prop:rational} 
then shows that $P^d_n \to P^d$ pointwise on $X$ for all $d$.

If $x \in X \cap \H$, pick $d$ large enough such that $\diam (f_n(x)) > \eta_d$ for all $n$.
Then $f_n(x) = P^d_n(x)$ for all $n$ so that $f_n(x) \to P^d(x)$. 
If $x \in X(k)$ is a rigid point, and $\diam P^d(x) > \eta_d$ for some $d$ then $f_n(x) = P^d_n(x) + R^d_n(x) \to P^d(x)$. Otherwise $\diam P^d(x) \le \eta_d$ for all $d$. Pick $\e \ll 1$, and $d \gg1$ such that $\eta_d \le \e/2$. As $P^d_n(x)$ converges to a point of diameter $\le \eta_d$, there exists an $N$ such that for $n,m \ge N$, we get $|P^d_n(x) -P^d_m(x)| \le 2 \eta_d$. Whence
$|f_n(x) - f_m(x) | \le \max\{\eta_d, 2 \eta_d\}  \le \e$. This proves $f_n(x)$ is a Cauchy sequence and converges by completeness.

\medskip
Now we establish the theorem when  $k$ is not algebraically closed. We start by 
embedding $ k$ into $\bar{k}$, where $\bar{k}$ denotes the completion of an algebraic closure of $k$.
Denote by $U_{\bar{k}}$ and $Y_{\bar{k}}$ the subsets of $\P^{1,an}_{\bar{k}}$ obtained as 
a lift of $U$ and $Y$, respectively, under the quotient $ \P^{1,an}_{\bar{k}} \to \poneank$ by the 
Galois action.
It follows that $f_n$ lifts to a map $\bar{f}_n: U_{\bar{k}} \to Y_{\bar{k}}$ such that 
$\bar{f}_n \circ \sigma = \sigma \circ \bar{f}_n$ for all $\sigma \in \gal (\bar{k}/k)$.
Thus we may extract a subsequence $\bar{f}_{n_j} \to \bar{f}$
which is pointwise convergent in $U_{\bar{k}}$. 
Since Galois group elements act continuously and $\bar{f}$ is continuous,
$\bar{f} \circ \sigma = \sigma \circ \bar{f}$ 
for all $\sigma \in \gal (\bar{k}/k)$. 
Recall that $U \subset \aoneank$,  and $U =U_k$ is isomorphic to 
$U_{\bar{k}}$ modulo the action of  $\gal (\bar{k}/k)$. Similarly, 
$Y$ is isomorphic to $Y_{\bar{k}}$ modulo the Galois action.
Whence $\bar{f}$ descends to a continuous map $f:U \to Y$ which is the pointwise limit of $f_{n_j}$.
\hfill $\Box$

%

\subsection{Proof of Theorem~A}

We shall rely on the following two results that are consequences of Theorem~\ref{thm:extract-affinoid}. Recall that $x_g$ denotes the Gauss point.

\begin{lemma}
  \label{lem:6}
  Let $X$ be the affinoid obtained after removing finitely many directions at $x_g$ from 
$\poneank$. Consider a sequence $f_n : X \to \poneank$  of analytic maps
such that $f_n^{-1} \{x_g\} = \{ x_g \}$, and $\deg_{x_g} f_n \to \infty$.

Then there exists a subsequence $f_{n_j}$ converging pointwise in $X$.
\end{lemma}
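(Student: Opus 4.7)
My plan is to reduce Lemma~\ref{lem:6} to Theorem~\ref{thm:extract-affinoid} by analyzing $f_n$ direction by direction at $x_g$, after a normalization by translation that brings the (varying) image direction into a fixed position. First, the hypothesis $f_n^{-1}\{x_g\}=\{x_g\}$ forces $f_n(x_g)=x_g$ and ensures that, for each direction $D$ at $x_g$ contained in $X$ (a connected component of $X\setminus\{x_g\}$), the image $f_n(D)$ is connected and disjoint from $x_g$, hence lies in a single direction $D^{(n)}$ at $x_g$. The correspondence $D\mapsto D^{(n)}$ is the action of the tangent map $T_{x_g}f_n$, a rational map of degree $\deg_{x_g}f_n\to\infty$ on $\P^1(\tilde{k})\simeq T_{x_g}\poneank$.

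To invoke Theorem~\ref{thm:extract-affinoid}, I fix one direction $D$ and normalize: choose $\alpha_{n,D}\in k$ with $|\alpha_{n,D}|\le 1$ lifting the residue class of $D^{(n)}$ in $\tilde{k}$ (after a preliminary M\"obius inversion if that residue is $\infty$), and set $g_{n,D}(z):=f_n(z)-\alpha_{n,D}$. Then $g_{n,D}$ sends $D$ into the open unit disk, hence into the proper affinoid $B_1(0)\subset\poneank$, while $D$ itself is a basic open set (a component of $\poneank$ minus a point). Theorem~\ref{thm:extract-affinoid} therefore produces a subsequence along which $g_{n,D}$ converges pointwise on $D$ to a continuous function $g_D:D\to B_1(0)$. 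Since the $\alpha_{n,D}$ lie in the sequentially compact closed unit disk of $\aoneank$ (Poineau), I simultaneously arrange $\alpha_{n,D}\to\alpha^\ast_D$ in $B_1(0)$.

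From $f_n|_D(z)=g_{n,D}(z)+\alpha_{n,D}$ I then recover the limit of $f_n$ on $D$. When $\alpha^\ast_D\in k$ is rigid, continuity of addition by a rigid point in $\aoneank$ gives $f_n(z)\to g_D(z)+\alpha^\ast_D$. Otherwise $\alpha^\ast_D\in\H$; a further extraction forces the residues $\bar\alpha_{n,D}\in\tilde{k}$ to be pairwise distinct, whence a direct comparison of the semi-norms $P\mapsto|P(f_n(z))|$ against linear polynomials $T-a\in k[T]$ shows $\alpha^\ast_D=x_g$ and $f_n(z)\to x_g$ for every $z\in D$. In either case the limit on $D$ is continuous. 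The main obstacle I foresee is that this construction gives one subsequence per direction $D$, and there may be uncountably many directions at $x_g$ in $X$; I would address it by diagonalizing the extraction over a countable family of directions (one per residue class in $\tilde{k}$ actually used) and then leveraging the explicit form of the limit, together with Poineau's sequential compactness in higher-dimensional Berkovich affine spaces, to propagate pointwise convergence to the remaining directions and hence to all of $X$.
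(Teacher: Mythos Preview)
Your direction-by-direction strategy is sound, and you correctly isolate the real obstacle: there may be uncountably many directions at $x_g$ in $X$, so a per-direction extraction followed by diagonalization cannot work as stated. This is a genuine gap. Your proposed fix --- diagonalizing over ``a countable family of directions (one per residue class in $\tilde{k}$ actually used)'' and then invoking Poineau to ``propagate'' --- does not go through: the residue field $\tilde{k}$ can be uncountable, you give no criterion singling out a countable subfamily, and Poineau's sequential compactness applies to finite-dimensional Berkovich affinoids, not to an infinite product of normalization parameters $(\alpha_{n,D})_D$. There is also a minor slip in your dichotomy: whether $\alpha^\ast_D$ is rigid or lies in $\H$ is not the same as whether the residues $\bar\alpha_{n,D}$ repeat or are eventually distinct (rigid points in a single open ball can converge to $x_g$ when the value group is dense).

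The paper closes exactly this gap with a short combinatorial observation that makes your normalization unnecessary. After one global extraction ensuring the tangent maps $T_{x_g}f_n$ are pairwise distinct rational functions on $\P^1(\tilde{k})$, set
\[
S=\{\,D\subset X:\ T_{x_g}f_n(D)=T_{x_g}f_m(D)\ \text{for some }n\neq m\,\}.
\]
Two distinct rational maps agree at only finitely many points, so $S$ is a countable union of finite sets, hence countable. For every direction $D\notin S$ the balls $f_n(D)$ are pairwise distinct directions at $x_g$, and this alone forces $f_n(x)\to x_g$ for all $x\in D$ \emph{without any further extraction}. One then applies Theorem~\ref{thm:extract-affinoid} directly to each $D\in S$ (the image $f_n(D)$ already lies in a closed ball, so no translation is needed) and diagonalizes over the countable set $S$. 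The missing idea in your proposal is precisely this a priori identification of a countable ``bad'' set outside of which convergence is automatic.
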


\begin{lemma}
  \label{lem:7}
  Let $U$ be an open annulus and $f_n : U \to \poneank \setminus \{ 0, \infty \}$ a sequence of analytic maps. Then there exists a subsequence $f_{n_j}$
converging pointwise in $U$.
\end{lemma}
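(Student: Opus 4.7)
My plan is to exploit the Laurent decomposition of non-vanishing analytic functions on an annulus to reduce the problem to Theorem~\ref{thm:extract-affinoid} together with a direct analysis of monomials. Choose a coordinate on $\poneank$ so that $U=\{r<|z|<R\}$; since each $f_n$ has neither zeros nor poles on $U$, it factors uniquely as $f_n(z)=c_n z^{d_n} u_n(z)$, where $c_n\in k^\times$, $d_n\in\Z$ is the (constant) Newton slope of $f_n$ on the skeleton of $U$, and $u_n$ is analytic on $U$ with $|u_n(x)-1|<1$ for every $x\in U$. Consequently $u_n$ factors through the proper affinoid $Y=\overline{B}(1,1)\subsetneq\poneank$, so Theorem~\ref{thm:extract-affinoid} supplies a subsequence along which $u_n$ converges pointwise on $U$ to a continuous limit.

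It remains to extract a pointwise convergent subsequence of $c_n z^{d_n}$. Using the sequential compactness of Berkovich affinoids (Poineau's theorem), I would extract so that $c_n\to c_\infty\in\poneank$ and so that either $d_n$ is eventually constant or $|d_n|\to\infty$; in the latter case I arrange additionally that $|c_n|^{1/|d_n|}\to\tau\in[0,\infty]$ and that $d_n$ converges in the profinite completion $\widehat{\Z}$. The bounded-degree case follows from the continuity of multiplication. In the unbounded case, say $d_n\to+\infty$, the skeleton splits along the threshold $|z|=1/\tau$: on the region $|z|<1/\tau$, the quantity $|c_n||z|^{d_n}$ tends to $0$ and so $f_n\to 0$ in $\poneank$; on $|z|>1/\tau$ it tends to $\infty$ and $f_n\to\infty$; and on the critical circle $|z|=1/\tau$ the convergence of $d_n$ in $\widehat{\Z}$ yields simultaneous convergence of $z_0^{d_n}$ at every rigid $z_0$ of the circle, while the tangent action handles the type II points there. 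Off-skeleton points are pinned to their projection on the skeleton by the bound $|u_n-1|<1$.

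The main obstacle is the simultaneous convergence of $z_0^{d_n}$ over uncountably many rigid $z_0$ on the critical circle in the unbounded-degree branch: the classical unit circle $\{|z|=1\}\subset k$ is not compact in the $k$-topology, so an ad hoc extraction at each $z_0$ would yield incompatible requirements. The resolution is that the profinite completion $\widehat{\Z}$ is compact, so a single extraction of $d_n$ to a limit in $\widehat{\Z}$ suffices: via the decomposition of units into roots of unity and $1$-units, convergence of the exponents in $\widehat{\Z}$ produces pointwise convergence of $z_0^{d_n}$ at every rigid point of the critical circle, which together with the preceding two paragraphs completes the argument.
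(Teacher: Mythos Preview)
Your Laurent factorization $f_n=c_n z^{d_n}u_n$ with $|u_n-1|<1$ is a legitimate and genuinely different starting point, and handling the $u_n$ via Theorem~\ref{thm:extract-affinoid} is fine. The gap is in the unbounded-degree branch on the critical circle. The assertion that a single extraction of $d_n$ in $\widehat{\Z}$ forces $z_0^{d_n}$ to converge for \emph{every} rigid $z_0$ with $|z_0|=1/\tau$ is not justified and, as you phrase it, is incorrect. First, the decomposition ``unit $=$ (root of unity)$\times$($1$-unit)'' is unavailable over a general $k$: whenever $\tilde k^\times$ contains an element of infinite order (any $\tilde k$ of characteristic~$0$, or e.g.\ $\tilde k=\overline{\F_p(T)}$), units with that residue admit no such factorization. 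Second, even for a $1$-unit $w$ the map $n\mapsto w^n$ extends continuously to $\Z_p$ only when $\car\tilde k=p>0$; in residual characteristic~$0$ one has $|w^n-w^m|=|w-1|$ for all $n\neq m$, so $w^{d_n}$ is never Cauchy in $k$. It may well converge in $\aoneank$ to a type~II point, but establishing this uniformly in $z_0$ is a separate argument you have not supplied, and in any case the next step then fails: you still need the \emph{product} $c_n z_0^{d_n}u_n(z_0)$ to converge, and there is no multiplication $\aoneank\times\aoneank\to\aoneank$, so convergence of the three factors separately (possibly to non-rigid points) does not give convergence of the product of rigid points. Finally, the clause ``off-skeleton points are pinned to their projection on the skeleton by $|u_n-1|<1$'' does not do what you want: for a rigid $x$ off the skeleton, $c_n x^{d_n}$ is a rigid point whereas $c_n(\pi x)^{d_n}$ is not, and $|u_n-1|<1$ controls only $|f_n(x)|$, not its position.

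The paper's proof avoids all of this. It does not factor $f_n$; instead it studies $\log\diam\circ f_n$ along the skeleton of a compact sub-annulus $Y$, and in the hard case (where $f_n(a_Y)\to0$, $f_n(b_Y)\to\infty$) locates a single point $x_t$ on the skeleton with $f_n(x_t)=y_\beta$ fixed and $\deg_{x_t}f_n\to\infty$, then invokes Lemma~\ref{lem:6}. The mechanism of Lemma~\ref{lem:6} is a \emph{countability} argument, not an arithmetic one: among the directions at $x_t$, only countably many can be sent to the same image ball by two different $f_n$, so on all other directions $f_n\to x_g$ automatically, while on the countable exceptional set one applies Theorem~\ref{thm:extract-affinoid} ball by ball and diagonalizes. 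This sidesteps exactly the uncountability obstruction you tried to resolve with $\widehat{\Z}$.
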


Since any connected open sets of $\poneank$ is a countable union of basic open sets, we may assume $X$ is a basic open set. Decompose $X$ into a finite union of
open annuli and affinoids as follows.
Let $\pi_X: X \to \cA_X$ be the natural  retraction of $X$ on its skeleton. For each open edge $I=(v_1,v_2)$ of $\cA_X$, the set  $A_I= \pi_X^{-1}(I)$ is an open annulus and
for each branched point $v$ of $\cA_X$ the affinoid $X_v = \pi_X^{-1}(v)$ is the complement of finitely many directions at $v$.

Apply Lemma~\ref{lem:7} finitely many times, to extract a subsequence $f_n$ converging pointwise in the union of the annuli $A_I$. 

Now let $Y$ be the affinoid $X_v$ associated to a vertex  $v$ of $\cA_X$.
 Passing to a subsequence, let $w = \lim f_n(v)$. If necessary,
passing to a further subsequence, $f_n(v) = w$ for all $n$ or
$f_n(v) \neq w$ for all $n$. 

Observe that $f_n(X_v)\cap (0,\infty)$ is reduced to $f_n(v)$ (if non empty). It follows that 
in the latter case $f_n$ converges to $w$ in $Y$.

If $f_n(v) =w$ for all $n$, and $\deg_v f_n \to \infty$, then we apply  Lemma~\ref{lem:6} to extract a pointwise convergent subsequence in $Y$.
We may thus assume that $f_n(v) =w$, and $\deg_v f_n = d$ for all $n$.
Let $I_1, \dots, I_\ell$ be all the edges of $\cA_X$ with one endpoint at $v$ and consider the basic open set  
$U = Y \cup A_{I_1} \cup \cdots \cup  A_{I_\ell}$. 
If $w \notin (0,\infty)$, then $w$ is in a direction $D$ of a point $x \in (0,\infty)$. It follows $f_n(U) \subset D$ for all $n$.
Thus we may apply Theorem~\ref{thm:extract-affinoid} to extract a subsequence that is pointwise converging in $Y$.
If $w \in  (0,\infty)$, then the degree $d_j$ along $I_j$ is constant for all $j$ such that $f_n(I_j) \subset (w,\infty)$, by Lemma~\ref{lem:1} applied to $f_n$ and $1/f_n$. Therefore $d_j$ is bounded by $d$. After observing that if $x \in U$ and $f_n(x) \in (w,\infty)$, then $x \in I_j$ for some $1 \le j \le \ell$ we have that a neighborhood of $f_n(U) \subset B$ for some closed ball $B$ and all $n$.
Applying Theorem~\ref{thm:extract-affinoid} we obtain the desired subsequence converging pointwise in $Y$ and Theorem~A follows.

\begin{proof}[Proof of Lemma~\ref{lem:6}]
Recall that  we identify a direction $D$ at $x_g$ with the open ball in $\poneank$ of points
determining $D$. As $f_n^{-1}\{ x_g \} = \{ x_g\}$ for all $n$,  then for any direction $D$ determined by points in $X$,   $f_n(D)$ is the ball determined  by the direction 
$T_{x_g} f_n (D)$.

 Since $\deg_{x_g} f_n \to \infty$, without loss of generality we may assume that $T_{x_g} f_n \neq T_{x_g} f_m$ provided that $n \neq m$. 
  Let $S \subset \P^1(\tilde{k})$ be the  set of directions $D$ at $x_g$ determined by points in $X$, and  such that $f_n(D) = f_m(D)$ for some $n, m$ with $n \neq m$. Observe that $S$ is countable.
 
In the directions not in $S$, the sequence $f_n$ converges pointwise to  $x_g$.   
From Theorem~\ref{thm:extract-affinoid} and a  diagonal argument we may extract a subsequence $f_{n_j}$ converging pointwise in $D$ for all directions $D$ in $S$. 
 \end{proof}

\begin{proof}[Proof of Lemma~\ref{lem:7}]
It is sufficient to prove that we may extract a pointwise convergent subsequence
in an (open) annulus $Y \subset U$ such that $\bar{Y} \subset U$.
Write $[a_Y, b_Y] =\cA_Y \subset \cA_U$.

Relabelling $a_Y, b_Y$, if necessary, and passing to a subsequence we may assume that there exists a ball $B$ (containing $0$ or $\infty$) such that either $f_n (Y) \subset \poneank \setminus B$, or $f_n(a_Y) \to 0$ and $f_n(b_Y) \to \infty$. 
In the first case we obtain a pointwise convergent subsequence from Theorem~\ref{thm:extract-affinoid}. 

In the latter case, we first observe that $f_n^{-1} (0,\infty) \subset (a_Y, b_Y)$ and $f_n(a_Y,b_Y) \subset (0,\infty)$ for all $n$ sufficiently large, since $f(Y) \subset \poneank \setminus \{ 0,\infty\}$.
By Lemma~\ref{lem:1} applied to $f_n$ and $1/f_n$,  
there exists $d_n \ge 1$ such that $\deg_x f_n = d_n$ for all $x \in \cA_Y$.

First we prove pointwise convergence in $\cA_Y$.
Denote by $x_s \in \cA_Y$ the point in $(a_Y,b_Y)$ at hyperbolic distance $s$ from $a_Y$, so that
$$\log \circ \diam (f_n(x_s)) = d_n (s - s_n)$$
for some $s_n \in \R$, after passing to a subsequence and maybe
 postcomposing by $1/z$.
Passing to further subsequences, let $t=\lim_n s_n \in [-\infty, +\infty]$ and
$\beta = \lim_n d_n(t-t_n) \in [-\infty, +\infty]$.
If $s \neq t$,  then $f_n(x_s) \to \pm \infty$. Now $f_n(x_t)$ converges to the unique point $y_\beta$ in $[0,+\infty]$ of diameter $\exp(\beta)$ with the obvious interpretation when $\beta = \pm \infty$. 

For every $x_s \in (a_Y,b_Y)$, let $X_s$ be the union of the directions at
$x_s$ not determined by $a_Y$ or $b_Y$. It follows that for $s \neq t$, in $X_s$, the maps $f_n$ converge
pointwise to $\lim_n f_n(x_s) = 0$ or $\infty$. Similarly, if $\beta = \pm \infty$, then  $f_n$ converges pointwise in $X_t$ 
to $\lim_n f_n(x_t) = 0$ or $\infty$.
So we assume that $\beta \in \R$ and we have to prove that a subsequence converges pointwise in $X_t$. In fact, passing to a subsequence, either $f_n(x_t) = y_\beta$ for all $n$ or $f_n(x_t) \neq y_\beta$ for all $n$. In the latter case we have pointwise convergence to $y_\beta$ in $X_t$. In the former, we may apply the previous Lemma~\ref{lem:6}
 to conclude that $f_n$ has a pointwise converging subsequence in $X_t$.
\end{proof}

%
%

\section{Montel's theorem}
\label{sec:proofB}

In this section we give a proof of Theorem~B.
To do so, we first need to analyze in more detail families of analytic functions with unbounded local degree that avoid three points in $\poneank$.

\subsection{Family of analytic functions with unbounded local degree}\label{sec:unbdd-fam}

Recall that $\cA_Y$ denotes the skeleton, and $\partial Y$ the boundary of a basic set (or of an affinoid domain). 

Our aim is to prove the following two results.

\begin{proposition}\label{prop:subfine}
Let $X$ be any basic open subset of $\poneank$, and $f_n : X \to \poneank \setminus \{ 0,1 , \infty\}$ be any sequence of analytic functions such that  $\deg_x (f_n) \to \infty$ for some $x\in X$. Then replacing $f_n$ by a suitable subsequence we are in one of the following two situations:
\begin{itemize}
\item either $f_n$ is converging pointwise  on $X$ to a constant function;
\item
or there exists a branched point $x'$ of $\cA_X$ such that $f_n(x') = x_g$ for all $n$ and $\deg_{x'} f_n \to \infty$.
\end{itemize}
\end{proposition}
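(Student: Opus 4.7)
The plan is to first invoke Theorem~A to extract a subsequence along which $f_n \to f$ pointwise on $X$, and then distinguish two cases according to whether the images $f_n(X)$ are eventually confined to a proper affinoid avoiding one of $\{0, 1, \infty\}$.

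In the first case, after extraction there is a single open ball $B \subset \poneank$ containing one of $\{0, 1, \infty\}$ such that $f_n(X) \subset \poneank \setminus B$ for all $n$; in particular $\diam f_n(x)$ is uniformly bounded on $X \cap \H$ by $\diam(\poneank \setminus B) < \infty$. Combined with the hypothesis $\deg_{x_0} f_n \to \infty$, Proposition~\ref{prop:3} then gives pointwise convergence of $f_n$ to a constant, producing alternative (I). In the second case, the images $f_n(X)$ meet every open neighborhood of each point of $\{0, 1, \infty\}$ for all large $n$, and therefore meet all three directions at $x_g$ determined by $0$, $1$ and $\infty$; by connectedness of $f_n(X)$, this forces $x_g \in f_n(X)$ and hence $f_n^{-1}(x_g) \cap X \neq \emptyset$ for all large $n$.

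The remaining task in this second case is to locate a fixed branched point $x' \in \cA_X$ with $f_n(x') = x_g$ and $\deg_{x'} f_n \to \infty$. I would apply Proposition~\ref{lem:3} to a suitable affinoid $Y \subset X$ containing $\cA_X$ in its interior (obtained by removing small open balls around $\partial X$) to produce, after extraction, a fast arc from $x_0$ to a fixed boundary point of $Y$ along which $\deg f_n \to \infty$. An iteration of this construction, exploiting the finite combinatorial structure of $\cA_X$, should then transport the degree explosion to an interior branched vertex $x'$ of $\cA_X$. Using the convexity of $\log \diam f_n$ along the arc (Lemma~\ref{lem:1}), together with the nonemptiness of $f_n^{-1}(x_g)$ near the arc, a final extraction would arrange $f_n(x') = x_g$ for all $n$.

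The hardest step is this last one: pinning down the branched point $x'$ and arranging the identity $f_n(x') = x_g$---not merely convergence to $x_g$---requires a careful inductive argument that uses the finiteness of vertices of $\cA_X$ in an essential way. In the degenerate case where $\cA_X$ has no interior branched points (so $X$ is a ball or an annulus), the second case must instead be excluded altogether, by showing that high-degree spanning behaviour between two points of $\{0,1,\infty\}$ combined with a preimage-counting argument would force $f_n$ to cover the third point and therefore contradict the avoidance hypothesis.
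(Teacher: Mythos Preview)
Your dichotomy and Case~1 are essentially the paper's: if $x_g \notin f_n(X)$ for infinitely many $n$, then after extraction $f_n(X)$ lies in a fixed closed ball with boundary $x_g$, and Proposition~\ref{prop:3} gives convergence to a constant. (Invoking Theorem~A at the outset is harmless but unnecessary.)

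The gap is in Case~2. You are missing the one observation that does all the work: \emph{every} point $x' \in f_n^{-1}(x_g) \cap X$ is automatically a branched point of $\cA_X$. Indeed, for each of the three directions at $x_g$ determined by $0$, $1$, $\infty$, any preimage direction $D \in T_{x'}\poneank$ must point towards $\partial X$; otherwise the open ball determined by $D$ would lie in $X$ and its image would be the closed ball at $x_g$ in the direction $T_{x'}f_n(D)$, hence would contain one of $0,1,\infty$. Thus $x'$ has at least three directions towards $\partial X$, so $x' \in \cA_X$ is a branched vertex. Since $\cA_X$ has only finitely many such vertices, after extraction the set $S = f_n^{-1}(x_g)$ is a \emph{fixed} finite subset of the branched points of $\cA_X$, independent of $n$. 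This also disposes of the ball/annulus case for free: if $\cA_X$ has no branched vertex then $S = \emptyset$, i.e.\ $x_g \notin f_n(X)$, and we are back in Case~1. No separate preimage-counting argument is needed.

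With $S$ fixed, the argument is short and does not use fast arcs. If $x \in S$ we are done. Otherwise let $U$ be the component of $X \setminus S$ containing $x$; after extraction $f_n(U)$ sits in a fixed closed ball, so Proposition~\ref{prop:3} gives $\diam f_n(z) \to 0$ for all $z \in U \cap \H$. Choose $x' \in S \cap \partial U$ and the edge $(z',x')$ of the convex hull of $\{x\} \cup \partial U$ ending at $x'$. If $\deg_{x'} f_n$ were bounded, convexity of $\log\diam f_n$ on $(z',x')$ (Lemma~\ref{lem:1}) would bound $d_\H(f_n(z), x_g)$ for $z \in (z',x')$; but $\diam f_n(z) \to 0$ forces this distance to infinity, a contradiction. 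Hence $\deg_{x'} f_n \to \infty$ and $f_n(x') = x_g$ for all $n$, which is alternative~(II).

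Your proposed fast-arc route runs into two concrete problems. First, Proposition~\ref{lem:3} requires the target to lie in $\aoneank$, which is not available until \emph{after} you have restricted to a component of $X \setminus S$; but you cannot form $S$ without the branched-point observation above. Second, fast arcs terminate at $\partial Y$, not at interior branched vertices of $\cA_X$, so the ``transport'' you describe is going in the wrong direction; the iterative scheme you sketch to reverse this is not needed once $S$ is identified.
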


\begin{proposition}\label{prop:subfine2}
Let $X$ be a basic open subset of $\poneank$, $x_0 \in X$, and $f_n : X \to \poneank \setminus \{ 0,1 , \infty\}$ be a sequence of analytic functions such that  
$f_n^{-1}(x_g) = x_0$ for all $n$, and  $\deg_{x_0} (f_n) \to \infty$.
Then $\unsepdeg_{x_0} (f_n) \to \infty$ (so that the residual characteristic of $k$ is positive).
\end{proposition}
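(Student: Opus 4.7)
The plan is to reduce to the case $x_0 = x_g$ via a pre-composition with a Möbius transformation, and then combine Riemann--Hurwitz on the separable part of $T_{x_g} f_n$ with a Newton polygon argument that forces zeros of $f_n$ into open balls meeting the finite complement $S := \poneank \setminus X$.

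First, since $\deg_{x_0} f_n$ is eventually at least $2$ the point $x_0$ is of type II, hence is conjugate to $x_g$ by an element of $\mathrm{PGL}(2,k)$; pre-composing each $f_n$ with such an element turns $X$ into another basic open set and gives $f_n(x_g) = x_g$ and $f_n^{-1}(x_g) = \{x_g\}$, without affecting the codomain constraints. Set $d_n := \deg_{x_g} f_n \to \infty$ and consider the induced residue map $\psi_n := T_{x_g} f_n \in \tilde k(t)$ of degree $d_n$. Assume by contradiction that $\unsepdeg_{x_g}(f_n) = p^{r_n}$ is bounded, and decompose $\psi_n = \phi_n^s \circ F^{r_n}$, where $F(t) = t^p$ is the absolute Frobenius and $\phi_n^s$ is separable of degree $d'_n = d_n/p^{r_n} \to \infty$. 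Applying the classical Riemann--Hurwitz formula to $\phi_n^s$ on $\P^1(\tilde k)$ and the three distinct points $\{\tilde 0,\tilde 1,\tilde\infty\}$ yields at least $d'_n + 2$ distinct preimages; since $\tilde k$ is algebraically closed (hence perfect), $F$ is bijective on rigid points of $\P^1(\tilde k)$ and the same lower bound holds for $\psi_n$.

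The heart of the argument is the following geometric obstruction: every direction $\tilde\alpha$ with $\psi_n(\tilde\alpha) \in \{\tilde 0, \tilde 1, \tilde\infty\}$ must have its associated open ball $B_{\tilde\alpha}$ meet $S$. The three cases are symmetric --- replace $f_n$ by $f_n - 1$ or $1/f_n$ for $\tilde 1$ or $\tilde\infty$, both of which remain analytic on $X$ by the avoidance hypothesis --- so suppose $\psi_n(\tilde\alpha) = \tilde 0$ and, toward contradiction, $B_{\tilde\alpha} \subset X$. Choose a lift $\alpha \in k$ with $|\alpha|\le 1$, and expand $f_n(\alpha + y) = \sum_{k\ge 0} c_k y^k$, convergent on $|y| < 1$. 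The direction hypothesis gives $|c_0| = |f_n(\alpha)| < 1$, while $f_n(x_g) = x_g$ forces the Gauss norm of this power series on the closed unit disc to equal $|f_n|_{x_g} = 1$, so $\max_k |c_k| = 1$ and in particular some $|c_k| = 1$ with $k \ge 1$. The Newton polygon of $\sum c_k y^k$ then starts at height $-\log|c_0| > 0$ and descends to height $0$ at index $k$, so it must have a strictly negative slope, yielding a zero of $f_n$ in $B_{\tilde\alpha} \subset X$ and contradicting the avoidance of $0$. Since each point of $S$ lies in a unique direction at $x_g$, at most $|S|$ directions satisfy the obstruction; combined with Riemann--Hurwitz, $d'_n + 2 \le |S|$, so $d'_n$ is bounded and $\unsepdeg_{x_g} f_n = d_n/d'_n \to \infty$, the desired contradiction. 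Since $p^{r_n} > 1$ for $n$ large, the residue characteristic $p$ is necessarily positive.

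The main technical difficulty I expect is justifying the identity $\max_k|c_k| = 1$ used in the Newton polygon step: the Taylor series of $f_n$ at $\alpha$ is a priori only known to converge on the open ball $|y| < 1$, and recovering $|f_n|_{x_g}$ as a Gauss norm on the closed disc is slightly delicate when $S$ contains other rigid points of the closed unit disc. This is handled either by appealing to continuity of $f_n$ at $x_g$ to show $|f_n|_{x_{r}^{(y)}} \to 1$ as $r \to 1^-$ (which implies $\max_k |c_k| r^k \to 1$ and is enough to produce a negative Newton-polygon slope for $r$ sufficiently close to $1$), or by first approximating $f_n$ near $x_g$ by a rational function with the same reduction $\psi_n$ via Mittag--Leffler, reducing to the case where the Taylor expansion converges on a larger disc.
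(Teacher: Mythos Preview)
Your proposal is correct and follows the same overall strategy as the paper: both arguments show that $\psi_n^{-1}\{\tilde 0,\tilde 1,\tilde\infty\}$ is contained in the finite set of directions at $x_0$ pointing towards $\poneank\setminus X$, and then invoke Riemann--Hurwitz on the separable factor $\phi_n^s$ to bound its degree (the paper packages this second step as a separate lemma, stating that any rational map over $\tilde k$ with $R^{-1}\{0,1,\infty\}$ contained in a fixed finite set is a Frobenius iterate composed with one of finitely many separable maps).

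The only difference is in how the geometric obstruction is proved. The paper argues more directly: if $D$ is a direction at $x_0$ with $D\subset X$, then since $f_n^{-1}(x_g)\cap D=\emptyset$ the image $f_n(D)$ is the entire open ball in the direction $T_{x_0}f_n(D)$, so if that direction were $\tilde 0$ one would have $0\in f_n(D)$, a contradiction. Your Newton-polygon computation is essentially a re-derivation of this surjectivity. Regarding the technical difficulty you raise at the end: it does not actually arise. Once you assume $B_{\tilde\alpha}\subset X$ and recall $x_g=x_0\in X$, the closed Berkovich ball $\overline{B_{\tilde\alpha}}=B_{\tilde\alpha}\cup\{x_g\}$ is contained in $X$, so $f_n$ restricts to an element of the Tate algebra on that closed ball and $\max_k|c_k|=|f_n|_{x_g}=1$ holds on the nose, without any approximation or limiting argument.
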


\begin{proof}[Proof of Proposition~\ref{prop:subfine}]
Suppose first  that the set of integers $n$ such that $x_g \notin f_n(X)$ is infinite. 
Then we can find a subsequence such that $f_{n_j}(X)$ is included in a closed ball of $\poneank$  having $x_g$ as a boundary point. By relabeling $0,1,\infty$,  and possibly extracting again a subsequence, we may suppose $f_{n_j}(X) \subset \{ |z| \le 1\}$ for all $j$.
By Proposition~\ref{prop:3}, we conclude that some subsequence is converging
to a constant function as required.

From now on, we assume $f_n^{-1}(x_g) \cap X$ is non empty for any $n$.
Pick any point  $x' \in f_n^{-1}(x_g) \cap X$.
Since $f_n(X)$ avoids the triple $\{ 0,1,\infty\}$, the point $x'$ is 
necessarily a branched point of $\cA_X$. We may thus
assume that $f_n^{-1}(x_g)$ is independent of $n$.
If $x \in f_n^{-1}(x_g)$, then the proposition follows. Hence we also assume that
$x \notin f_n^{-1}(x_g)$.

Let $U$ be the connected component of $X \setminus f_n^{-1}(x_g)$ containing $x$.
Passing to a subsequence, $f_n(x) \to y$ and, maybe after postcomposition by a fixed projective transformation, $f_n(U)$ is contained in the unit ball.
By Proposition~\ref{prop:3}, $\diam f_n (z) \to 0$ for all $z \in U$.
Now choose a point $x' \in f_n^{-1}(x_g) \cap \partial U$. Let $\mathcal{A}$ be the convex hull 
of $\{ x \} \cup \partial U$ and let $(z',x')$ be the edge of $\mathcal{A}$ with endpoint $x'$.
We proceed by contradiction and suppose that  $\deg_{x'} f_n$ is bounded.
By convexity (Lemma~\ref{lem:1}), it follows that $\deg_z f_n$ is bounded for all $z \in (z',x')$.
Thus, $d_\H (f_n (z), x_g)$ is also bounded for all $z \in (z',x')$. Since $\diam f_n (z) \to 0$,
we obtain the desired contradiction and conclude that $\deg_{x'} f_n \to \infty$.
\end{proof}

\begin{proof}[Proof of Proposition~\ref{prop:subfine2}]
 We rely on the following lemma whose proof is given thereafter.

Recall that in characteristic $p$, the Frobenius morphism is defined by $F (z) = z^p$.
When $p=0$, the Frobenius morphism is by convention the identity map.
\begin{lemma}\label{lem:bd-poss}
Let $S$ be any finite subset of $\P^1({\tilde{k}})$.
Then there exists a finite collection of separable rational functions $R_i$ with the property that 
any rational function $R$ such that $ R^{-1}\{ 0, 1,\infty \} \subset S$
is the composition of $R_i$ with some iterate of the Frobenius morphism.
\end{lemma}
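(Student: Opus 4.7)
The plan is to take $\{R_1,\dots,R_N\}$ to be the set of all separable rational functions $R'\in\tilde k(z)$ satisfying $(R')^{-1}\{0,1,\infty\}\subset S$, and then to show that every admissible $R$ factors as $F^m\circ R_i$ for some $i$ and some $m\ge 0$, that is, with Frobenius applied on the \emph{outside}. The principal obstacle, invisible when $\mathrm{char}\,\tilde k=0$, is that the usual canonical factorization $R=R^{\mathrm{sep}}\circ F^m$ (with $R^{\mathrm{sep}}$ separable and $m$ maximal) places $(R^{\mathrm{sep}})^{-1}\{0,1,\infty\}$ inside $F^m(S)$, a set that varies with $m$, so that the set of separable parts arising this way is a priori infinite. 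The remedy will be to transfer the Frobenius to the other side, exploiting that $\tilde k$ is perfect (since $k$ is algebraically closed).

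For finiteness of the list, I would first invoke Riemann--Hurwitz: any separable $R'$ with $(R')^{-1}\{0,1,\infty\}\subset S$ satisfies
\[
3\deg(R')-\#(R')^{-1}\{0,1,\infty\}\;\le\;2\deg(R')-2,
\]
so $\deg(R')\le |S|-2$. The divisors of zeros and poles of $R'$, both effective and supported on $S$, determine $R'$ up to a multiplicative scalar; the scalar is then fixed by any chosen point of $(R')^{-1}(1)\subset S$. Since these divisor data range over a finite collection of effective divisors of bounded degree, only finitely many $R'$ arise.

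For the factorization, I will write $R=R^{\mathrm{sep}}\circ F^m$ canonically and define $\tilde R$ by taking $p^m$-th roots of all coefficients of $R^{\mathrm{sep}}$ (well-defined since $\tilde k$ is perfect). A direct computation using $(a+b)^{p^m}=a^{p^m}+b^{p^m}$ in characteristic $p$ gives $\tilde R(z)^{p^m}=R^{\mathrm{sep}}(z^{p^m})=R(z)$, i.e.\ $R=F^m\circ\tilde R$. The map $\tilde R$ remains separable, for if $\tilde R=\tilde R_1\circ F$ then $R=\tilde R_1^{(p^m)}\circ F^{m+1}$ expresses $R$ as a rational function of $z^{p^{m+1}}$, contradicting the maximality of $m$. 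Finally, since $F$ fixes $0,1,\infty$ pointwise and acts bijectively on $\P^1(\tilde k)$, we have $\tilde R^{-1}\{0,1,\infty\}=R^{-1}\{0,1,\infty\}\subset S$, so $\tilde R\in\{R_1,\dots,R_N\}$, concluding the argument.
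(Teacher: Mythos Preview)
Your proof is correct and shares the paper's overall strategy: bound the degree of the separable factor via Riemann--Hurwitz (giving $\deg\le\#S-2$), then count the finitely many separable maps of bounded degree whose zeros, poles, and $1$-values lie in $S$. The real difference is where Frobenius sits. The paper writes $R=\tilde R\circ F^n$ and asserts that $\tilde R^{-1}\{0,1,\infty\}\subset S$, but as you correctly point out, the canonical factorization only yields $\tilde R^{-1}\{0,1,\infty\}\subset F^n(S)$, a set depending on $n$; the paper does not explain how to get back to $S$. Your factorization $R=F^m\circ\tilde R$ (obtained by taking $p^m$-th roots of the coefficients of $R^{\mathrm{sep}}$, using that $\tilde{k}$ is perfect) makes the containment $\tilde R^{-1}\{0,1,\infty\}=R^{-1}\{0,1,\infty\}\subset S$ immediate, so your finite list is genuinely independent of $m$. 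For the application that follows in the paper only the exponent $m$ of Frobenius matters, and this equals the inseparable degree regardless of which side Frobenius is on (since $F^m\circ\tilde R=\tilde R^{(p^m)}\circ F^m$ with $\tilde R^{(p^m)}$ still separable), so your version of the lemma serves just as well there.
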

Since $f_n (X)$ avoids $\{ 0,1, \infty\}$, and $f_n^{-1}(x_g) \cap X$ is reduced to $x_0$, any direction at $x_0$ which is not determined by a branch of $\cA_X$ is necessarily mapped to a direction avoiding $\{ 0,1,\infty\}$. In particular, we can 
apply the previous lemma to $T_{x_0} f_n$, and after taking a suitable subsequence we have
$$
T_{x_0} f_n = R\circ F^{d(n)}
$$
for some fixed separable fraction $R \in \tilde{k}(T)$, and some $d(n) \ge0$. Note that by our assumption $\deg_{x_0}(f_n) = \deg(R)\, p^{d(n)} \to \infty$ so that we can take $d(n)$ to be
strictly increasing to infinity. Note in particular that we have $\car (\tilde{k}) >0$.
\end{proof}

\begin{proof}[Proof of Lemma~\ref{lem:bd-poss}]
Since the triple $\{ 0, 1, \infty\}$ is totally invariant by the Frobenius morphism, we can write
$R = \tilde{R} \circ F^n$ for some $n$ where $\tilde{R}$ is separable and $\tilde{R}^{-1}\{ 0, 1,\infty \} \subset S$. 

First we show that $\deg(\tilde{R}) \le d - 2$ with $d:= \# S$.
Indeed we have $\# \tilde{R}^{-1} (0) = \deg(\tilde{R}) - \sum_{\tilde{R}^{-1}(0)}( \deg_{\tilde{R}}(x) -1)$, and similarly for $1$ and $\infty$.
Summing up we get
\begin{eqnarray*}
d \ge \# \tilde{R}^{-1} \{ 0, 1, \infty \}  &=& 3\deg( \tilde{R}) -  \sum_{\tilde{R}^{-1}\{0,1,\infty\}}( \deg_{\tilde{R}}(x) -1)\\
& \ge & 3\deg( \tilde{R}) - (2\deg( \tilde{R})-2)~,
\end{eqnarray*}
as required.
We can thus write $\tilde{R}$ under the form:
$$
\tilde{R} (T)= a \, \frac{\prod (T - z_i) } { \prod (T- z'_j)} 
$$
for some $a \in k$, and a collection of at most $d -2$ points  $z_i , z'_j \in S$, and such that $\tilde{R}(z'') =1$ for some $z'' \in S$.
The collection of such fractions $\tilde{R}$ is finite.
\end{proof}

%
%

\subsection{Proof of Theorem~B}\label{sec:proofs}

Recall the setting: $X$ is a connected open subset of $\poneank$, and $f_n : X \to \poneank \setminus \{ 0,1,\infty\}$ a sequence of analytic functions
pointwise converging to $f$ in $X$ such that $\unsepdeg_{x} (f_n)$ is bounded for all type II points $x \in X$. 

Since any point in a connected open subset of $\poneank$ has an affinoid neighborhood,
it is sufficient to consider an affinoid domain $Y\subset X$ and show that the restriction of $f$ to $Y$ is continuous.

\medskip
Let us first assume that $ \sup_n \sup_Y \deg_x (f_n) <  \infty$.
We claim that one can find a closed ball $B$ of positive diameter such that 
$f_{n}(Y) \subset \poneank \setminus B$ (possibly after extracting a subsequence).
Indeed if it is not the case, by the maximum principle we may find $x_0, x_\infty \in \partial Y$ such that
$f_n(x_0)$ converges in $\aoneank$ and $f_n(x_\infty) \to \infty$. On the other hand
we have
$$
d_\H (f_n(x_0) , f_n (x_\infty) ) \le \sup_n \sup_Y \deg_x (f_n) \times d_\H(x_0,x_\infty) < \infty~,
$$
which yields a contradiction.
We conclude that $f$ is continuous and $f( Y \cap \H) \subset \H$,  by Theorem~\ref{thr:pointwise}.

\medskip

When the local degree is unbounded, 
by extracting a subsequence  we can assume that $f_n^{-1}(x_g)$ is a fixed (finite) set $S$ of branched points of $\cA_X$.
Let us now assume that  $\sup_n \sup_Y \deg_x(f_n) = \infty$. 
By Corollary~\ref{cor:1}, one can find a point $x \in Y$ such that $\deg_x(f_n) \to \infty$.
By Proposition~\ref{prop:subfine}, after extraction either we are done, or 
we infer the existence of a branched point $x'$ of $\cA_Y$
such that $f_n(x') = x_g$ for all $n$, and $\deg_{x'} (f_n) \to \infty$. 
Hence, $\deg_{x'} (f_n) \to \infty$ for some $x' \in S$.

Now for all $x \in S$, consider the connected component $Y(x)$ of $\{ x \} \cup (X \setminus S)$ containing $x$. 
By Proposition~\ref{prop:subfine2} applied to $Y(x')$, we infer that $\unsepdeg_{x'} (f_n) \to \infty$. Theorem~B follows, since 
this is not possible by assumption.


\section{Examples}\label{sec:example}
We explore various examples of limits of analytic maps. 

\subsection{Limits of analytic maps}
Let us describe some typical maps appearing as limits of analytic functions avoiding three points in the projective line.

Consider a sequence $\zeta_n \in k$ such that $|\zeta_n| =1$, and $|\zeta_n - \zeta_m | =1$ for all $n\neq m$. Pick any integers $r \le s$, and $a \in k$.
Then the sequence $f_n: \poneank \to \poneank$ given by $f_n(z) =z^r + a \zeta_n z^s $ is pointwise converging to the unique continuous function whose restriction to the standard affine line sends a point $z \in k $ to the point corresponding to the ball $B(z^r , |a| \cdot |z|^{s})$.
Note that except in the trivial case $a=0$ this function is never analytic.

\subsection{Non continuous limits}
\label{sec:NotC0}
We now explore a  class of examples showing that  assumptions are needed to get continuous limits.

Pick any rational function $R \in k(T)$ of degree $d\ge 2$. Suppose all its coefficients are $\le 1$ in norm, and the reduction of $R$ in the residue field of $k$ has degree $d$ (in this case $R$ is said to have good reduction). Consider the sequence of analytic function $  f_n = R^{ n!}$.
Then $f_n$ converges pointwise to a function $f$ that is not continuous.

To see this, recall that $\poneank \setminus \{ x_g \}$ has a partition into open balls $B(\zeta)$
one per element $\zeta$ of the residue field $\tilde{k}$ of $k$. Denote by $\tilde{R}$ the residue map acting on $\P^1 ({\tilde{k}})$.

\begin{enumerate}
\item
If  $\zeta$ is not preperiodic for $\tilde{R}$, then $R^{n!}(x) \to x_g$ for any point $x \in B(\zeta)$.
\item
If $\zeta$ is preperiodic to a periodic cycle of $\tilde{R}$ that is critical, then 
one can find $c \in \poneank$ such that $R^{n!}(x) \to c$ for any point $x \in B(\zeta)$. 
\item
If $\zeta$ is preperiodic to a periodic cycle of $\tilde{R}$ that is non-critical, then 
$R^{n!}(B(\zeta))$ is eventually mapped to some open ball $B(\zeta')$ that is fixed by $R^N$ for some $N$. And pointwise convergent subsequences of $R^{N m}$ 
have  (continuous) non-constant limit maps on $B(\zeta')$.
\end{enumerate}
The description of the limit map in the last case highly depends on the characteristic of the field. 
When the characteristic of $k$ is zero, and the residual characteristic is positive, then the ball $B(\zeta')$ is a component of quasi-periodicity, and $R^{n!} \to \id$ on it.
Finally in positive characteristic, when $R$ is the Frobenius map, only cases (1) and (2) appear.

\medskip

In characteristic $p>0$, for $\e_n$ small enough,  the restriction of any polynomial $f_n = z^{p^n} + \e_n z^{p^n+1}$  to the affinoid domain $\{ |z| \le 2\} \cap \{  2 |z| \ge 1 \} \cap  \{ 2 |z -1 | \ge 1 \}$ avoids $\{ 0, 1, \infty\}$. By Theorem~A, one can extract many subsequences of $f_n$ that converge pointwise. However none of the obtained limits are continuous.

\subsection{Analytic maps avoiding fewer points}

Theorem~B does not hold with analytic maps avoiding only two points. 
Take $X = \aoneank \setminus \{ 0\}$, and $f_n(z) = z^n$. Any limit
is $0$ on  $\{ 0 < |z| <1\}$, and $\infty$ on $\{ 1 < |z| < \infty\}$.
Hence cannot be continuous at $x_g$.

\medskip

Theorem~A does not hold with analytic maps avoiding only one point.
Pick $c \in k$ such that $|c| \ge 4$ and consider the quadratic polynomial
$P_c(z) = z^2 + c$. Then $\{P^n_c\}$ is a family of analytic maps on $\aoneank$ with values 
in $\aoneank= \poneank \setminus \{ \infty \}$.
We claim that no subsequence of $P^n_c$ is converging pointwise on $\aoneank$.

 The filled-in Julia set $K_c = \{ z \in\aoneank, \, |P^n_c(z)| \text{ is bounded}\}$
 is a Cantor set included in $\A^1(k)$, and there exists a homeomorphism
 $\pi: K_c \to \{ 0,1 \}^\N$ such that $\pi \circ P_c = \sigma \circ \pi$
where $\sigma \{ \e_n \} = \{ \e_{n+1}\}$ is the left shift on $\{ 0,1 \}^\N$.

Suppose by contradiction that $P^{n_j}_c$ converges pointwise on $\aoneank$.
This would imply $\sigma^{n_j}$ to converge pointwise on $\{ 0,1 \}^\N$.
Choose any sequence $\e$ such that $\e_{n_j} =1$ if $j$ is odd, and $\e_{n_j} =0$ if $j$ is even. 
Then $\sigma^{n_j} (\e)$ does not converge, which gives a contradiction.

%
%
%
%

\section{Normal families and applications}\label{sec:normal}
Let us recall the  following notion from the introduction.
\begin{definition}
Let $X$ be any open subset of $\poneank$.

A family $\cF$ of meromorphic functions  on $X$ is normal if for any sequence $f_n \in \cF$
and any point $x \in X$, there exists a neighborhood $V \ni x$, and  a subsequence $f_{n_j}$ that is converging pointwise on $V$ to a continuous function.
\end{definition}


\subsection{Local conditions for normality}
In the complex case, Marty's theorem (e.g. see~\cite{marty}) is a characterization of the normality of a family of meromorphic maps in terms of the chordal derivative. Such a result is unclear in the non-archimedean context. However we prove
\begin{theorem}\label{thm:normal-char}
Suppose $X  \subset \aoneank$ is an open set containing $0$, and $\cF$ is a family of  analytic maps $f :  X \to \aoneank$ such that $\sup_\cF |f(0)| < + \infty$.

If the family  $\cF$ is  normal  in a neighborhood of $0$, then $\sup_\cF|f|$ and $\sup_\cF |f'|$ are both uniformly bounded in some neighborhood of $0$.  The converse statement holds if $\car(k) =0$.
\end{theorem}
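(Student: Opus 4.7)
The plan splits cleanly into the two implications. For the forward direction, normality forces $|f|$ to be uniformly bounded on some ball $B_r(0)$ via a maximum-modulus argument, and $|f'|$ then follows by a Cauchy-type estimate on Taylor coefficients. For the converse in characteristic zero, Theorem~\ref{thm:extract-affinoid} applied on a ball where $|f|$ is uniformly bounded directly supplies the required pointwise convergent subsequences with continuous limit. The delicate step is obtaining the $|f|$ bound from normality; it requires a diagonal argument juggling normality, max modulus, and continuity of the limit.

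To execute this step I argue by contradiction: suppose $\sup_\cF \sup_{B_r(0)} |f| = \infty$ for every small $r > 0$. The maximum modulus principle on the closed Berkovich ball $\overline{B_r(0)}$ identifies $\sup_{B_r(0)} |f|$ with $|f(y_r)|$ at the Gauss point $y_r$, so one extracts $f_m \in \cF$ and $r_m \searrow 0$ with $|f_m(y_{r_m})| \to \infty$. Normality applied to $\{f_m\}$ yields a subsequence $f_{m_j} \to f$ pointwise on a neighborhood $V \ni 0$, with $f$ continuous. Since $\sup_\cF |f(0)| < \infty$, the value $f(0) \in \aoneank$ has finite norm, and continuity at $0$ gives $|f| \le M$ on a smaller neighborhood $U \ni 0$. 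Fix $m$ large enough that $y_{r_m} \in U$, so $|f(y_{r_m})| \le M$; pointwise convergence at this fixed point then gives $|f_{m_j}(y_{r_m})| \le M+1$ for $j$ large. Once also $m_j \ge m$, we have $y_{r_{m_j}} \in B_{r_m}$, so max modulus on $\overline{B_{r_m}}$ yields $|f_{m_j}(y_{r_{m_j}})| \le |f_{m_j}(y_{r_m})| \le M+1$, contradicting $|f_{m_j}(y_{r_{m_j}})| \to \infty$. The $|f'|$ bound is then routine: on $B_r(0)$, writing $f(z) = \sum a_n z^n$ gives $|a_n| r^n \le M$, and since $|n| \le 1$ in any non-archimedean field, $|f'(z)| \le \max_n |a_n| |z|^{n-1} \le M/r$.

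For the converse, assume $|f| \le M_1$ uniformly on some $B_r(0) \subset X$. Every sequence in $\cF$ then restricts to maps from the basic open set $B_r(0)$ into the affinoid domain $\overline{B_{M_1}(0)} \subsetneq \poneank$; Theorem~\ref{thm:extract-affinoid} furnishes a pointwise convergent subsequence with continuous limit, exactly the normality property at $0$. The hypothesis $\car(k) = 0$ enters because one wishes to pass between the $|f|$ bound and the $|f'|$ bound via Taylor estimates sensitive to the vanishing of $|n|$: in positive residue characteristic the Frobenius examples $z^{p^n}$ already satisfy $|f'| \equiv 0$ while $|f| \equiv 1$ on the unit ball, so a bound on $|f'|$ alone cannot imply a bound on $|f|$, and the symmetric equivalence only closes up cleanly in characteristic zero.
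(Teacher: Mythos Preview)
Your forward direction is correct and follows essentially the same circle of ideas as the paper: normality plus continuity of the limit forces $|f|$ to stay bounded near $0$ via the maximum principle, and then a Cauchy estimate controls $|f'|$. The paper organizes this slightly differently (it targets $\sup_\cF |f'(x_0)|$ directly and invokes Schwarz' Lemma after rescaling), but the content is the same.

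Your converse, however, takes a genuinely different route from the paper, and this is worth noting. You use the hypothesis that $\sup_\cF |f|$ is bounded on some ball and feed this directly into Theorem~\ref{thm:extract-affinoid}; this is correct and, as your own argument shows, requires no restriction on $\car(k)$. By contrast, the paper's proof of the converse deliberately starts from the \emph{weaker} input: only the bound on $|f'|$ together with the standing hypothesis $\sup_\cF |f(0)| < \infty$. From the Taylor expansion one gets $\sup_i |i|\,|a_i^{(n)}|\,r^i \le C$, and it is precisely here that $\car(k)=0$ is used to control $|a_i^{(n)}|\,\rho^i$ on a possibly smaller ball (trivially when $\car(\tilde{k})=0$, and via a geometric-series estimate when $\car(\tilde{k})=p>0$). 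In other words, the paper is really proving the sharper implication ``$\sup_\cF |f'|$ bounded $\Rightarrow$ normal,'' and this genuinely needs the characteristic hypothesis; your argument proves the literal converse of the stated theorem but renders the hypothesis $\car(k)=0$ superfluous. Your closing paragraph about Frobenius correctly identifies the obstruction to passing from an $|f'|$ bound to an $|f|$ bound, but that obstruction never actually bites in your proof since you already assumed the $|f|$ bound.
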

As an immediate corollary, we infer
\begin{corollary}\label{cor:normal-fixed}
Suppose $f$ is an entire function fixing the origin.
Then $\{ f^n \}$ is a normal family in a neighborhood of $0$ if and only if
$|f'(0)| \le 1$.
\end{corollary}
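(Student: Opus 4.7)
The plan is to handle the two directions separately, using Theorem~\ref{thm:normal-char} for the forward implication and a direct invariance argument combined with Theorem~\ref{thm:extract-affinoid} for the reverse implication (avoiding the characteristic hypothesis that arises in the converse of Theorem~\ref{thm:normal-char}).

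For the forward direction, suppose that $\{f^n\}$ is normal in a neighborhood of $0$. Since $f(0) = 0$, we have $f^n(0) = 0$ for all $n$, so $\sup_n |f^n(0)| = 0$. Theorem~\ref{thm:normal-char} then gives a neighborhood of $0$ on which $\sup_n |(f^n)'|$ is finite. Differentiating the composition at the fixed point via the chain rule yields
\[
(f^n)'(0) \;=\; \prod_{i=0}^{n-1} f'(f^i(0)) \;=\; f'(0)^n,
\]
so the sequence $|f'(0)|^n$ is bounded, forcing $|f'(0)| \le 1$.

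For the reverse direction, suppose $|f'(0)| \le 1$. Writing $f(z) = \sum_{i \ge 1} a_i z^i$, we have $|a_1| \le 1$, and since $f$ is entire the terms $|a_i|\, r^i$ tend to $0$ as $r \to 0$ for each fixed $i$, and more strongly one can choose $r_0 > 0$ small enough that $|a_i|\, r_0^i \le r_0$ for every $i \ge 2$ (using that the power series converges on every ball, so the coefficients grow no faster than any given exponential). Let $B = \{|z| \le r_0\}$; then for $|z| \le r_0$ the ultrametric inequality gives $|f(z)| \le \max_i |a_i|\,|z|^i \le r_0$, so $f(B) \subset B$ and hence $f^n(B) \subset B$ for all $n$. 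The open ball $B^\circ \subset \aoneank$ is a basic open set, the closed ball $B \subsetneq \poneank$ is an affinoid domain, and each iterate restricts to an analytic map $f^n : B^\circ \to B$. Theorem~\ref{thm:extract-affinoid} therefore provides, for any subsequence, a further subsequence converging pointwise on $B^\circ$ to a continuous function, proving normality at $0$.

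The main subtlety I anticipate is the invariance step: making precise, for a general entire function in the Berkovich sense, that the higher-order coefficients $|a_i|\, r^i$ can be made $\le r$ uniformly in $i \ge 2$ by shrinking $r$. This is routine from the fact that an entire function has coefficients satisfying $|a_i|^{1/i} \to 0$ (so that $|a_i|\,r^{i-1} \to 0$ uniformly as $r \to 0$), but it is the only nontrivial ingredient; once the invariant ball $B$ is produced the rest is immediate from Theorem~\ref{thm:extract-affinoid}.
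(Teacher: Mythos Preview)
Your proof is correct and follows essentially the same approach as the paper: the forward direction via Theorem~\ref{thm:normal-char} and the chain rule at the fixed point, and the reverse direction by exhibiting an invariant ball and invoking Theorem~\ref{thm:extract-affinoid}. You have simply filled in the details that the paper leaves implicit (in particular the existence of the invariant ball from $|f'(0)|\le 1$ and $|a_i|^{1/i}\to 0$).
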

The corollary also holds for rational maps. It is a consequence of Theorem~\ref{thm:normal} below and the fact that any repelling fixed point lies in the Julia set\footnote{A proof of this is given in~\cite[p.343]{baker-rumely:book} when $\car(k) =0$.}.
\begin{proof}
If $|f'(0)| \le 1$, then there exists a ball $B$ containing $0$ such that $f(B) \subset B$. By Theorem~\ref{thm:extract-affinoid}, the sequence $\{ f^n\}$ is a normal family on $B$.
The converse is a direct consequence of the previous theorem.
\end{proof}
\begin{proof}[Proof of Theorem~\ref{thm:normal-char}]
Assume the family is normal in some ball $B\ni 0$. 
Denote by $x_0 \in \H$ the point associated to $B$, and 
pick a subsequence $f_n$ (possibly with repetition) such that 
$\lim_n |f_n'(x_0)|  = \sup_\cF |f'(x_0)|$.
Reducing $B$ and passing to a subsequence,  we may suppose $f_n$ converges pointwise to a continuous function $g$ on $B$. Since $\sup |f_n(0)| < \infty$, and $g$ is continuous,  by extracting a further subsequence and rescaling the image, we may suppose $f_n(x_0)$ converges to a point in $B(0,1/2)$.  Therefore, $f_n(B) \subset \ov{B(0,1)}$ for $n\gg 1$, and  Schwarz' Lemma then implies $\sup_B |f_n'| = |f'_n(x_0)|$ is uniformly bounded. It follows that $\sup_{\cF} \sup_B  |f'| < \infty$ as required.

Assume conversely that $|f_n'| \le C$ in a some ball $B\ni 0$ of radius $r$, and look at the power series expansion $f_n(z) = f_n(0) + \sum_{i \ge 1} a^{(n)}_i z^i$.
We have $$\sup_{i \ge 1} |i|\, |a^{(n)}_i|\, r^i \le C.$$
If $\car(\tilde{k}) =0$, then $\sup_{i \ge 1} \, |a^{(n)}_i|\, r^i \le C$. Therefore, $f_n (B(0,r)) \subset B(0, C)$ and the claim follows from Theorem~\ref{thm:extract-affinoid}.
If $\car(\tilde{k}) >0$ and $0 < \rho < r/\car (\tilde{k})$, then
$$\sup_i  |a^{(n)}_i|\, \rho ^i \le \sup_i C |i|^{-1} \, \left( \frac{\rho}{r}\right)^i \le C \, \sup_i \left(\frac{\car(\tilde{k}) \, \rho}{r}\right)^i
< +\infty,$$ 
since $\car k =0$.
Thus,   for suitable positive constants $\rho, C'$ we have that $f_n (B(0,\rho)) \subset B(0, C')$.
Now we may apply  Theorem~\ref{thm:extract-affinoid} to establish that  $\cF$ is a normal in a neighborhood of $z=0$.
\end{proof}


\subsection{Normality and equicontinuity: proof of Theorem~C}

Suppose $\cF$ is a family of analytic functions on an open subset $X$ of $\poneank$
with values in $\poneank$.

Assume first that any $z \in \P^1(k)$ admits a neighborhood $U$ on which $\cF$ is a normal family. Let us split the family $\cF$ into two subfamilies $\cF_0 = \{ f \in \cF, \, |f(z)| \le 1 \}$, and $\cF_1 = \{ f \in \cF, \, |f(z)| >  1 \}$.
Then Theorem~\ref{thm:normal-char} applied to $\cF_0$ shows that $\sup_{\cF_0} \sup_U |f'| < \infty$. This implies the equicontinuity of $\cF_0$ on $U$. The same argument can be applied to $\cF_1$, since $1/f \in \cF_0$ for all
$f \in \cF_1$.

\medskip

Conversely, pick $z \in \P^1(k)$, and a ball $B$ containing $z$ such that 
$\{ f : B \to \P^1(k) \}_{f \in \cF}$ is equicontinuous. For  any sequence $f_{n}\subset \cF$, we must find a subsequence that is converging to a continuous function in a neighborhood $U \subset B$ of $z$. 
After extraction, and possibly replacing $f_n$ by $f_n^{-1}$, we may always assume that $|f_n(z)| \le 1$ for all $n$. By equicontinuity, shrinking $B$ if necessary, we conclude that 
$f_{n}(B)$ is included in the unit ball for all $n$.
Let $U$ be the convex hull of $B$. From Theorem~\ref{thm:extract-affinoid}, we conclude that there exists a subsequence  $f_{n_j}$ converging pointwise to a continuous function.


\subsection{Fatou set of a rational map}
Recall that the Julia set of a rational map $R$ of degree at least $2$ is the set of points $x \in \poneank$ such that for any open subset $U$ containing $x$, there exists an integer $n$ such that $R^n(U)$ contains all $\poneank$ but a countable set of discrete rigid points, see~\cite{baker-rumely:book,favre-letelier:equi}.

As a first application of our results, we prove
\begin{theorem}\label{thm:normal}
Suppose $R$ is a rational function of degree at least $2$. Then the Fatou set
of $R$ coincides with the set of points $x \in \poneank$ such that $\{ R^{n}\}$ forms a normal family in a neighborhood of $x$.
\end{theorem}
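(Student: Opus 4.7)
The plan is to establish the two inclusions separately: every point in the normality locus lies in $F(R)$, and every point of $F(R)$ lies in the normality locus. Both rely on the corollaries developed above combined with standard facts about the Julia set of a rational map on $\poneank$.

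For the inclusion $\{\text{normality locus}\}\subseteq F(R)$, I would argue by contrapositive: if $x\in J(R)$, then $\{R^n\}$ is not normal at $x$. The key ingredient is the density of repelling periodic points in $J(R)$ (available from Rivera-Letelier's work and \cite{baker-rumely:book}). For a repelling periodic point $p$ of period $N$, applying Corollary~\ref{cor:normal-fixed} to the entire map $R^N$, which fixes $p$ with multiplier of absolute value strictly greater than $1$, shows that $\{R^{Nm}\}_m$ is not normal at $p$, and hence neither is the full family $\{R^n\}$. Since the normality locus is open straight from the definition, its complement is closed; containing every repelling periodic point, it therefore contains $\overline{\{\text{repelling periodic points}\}}\supseteq J(R)$.

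For the inclusion $F(R)\subseteq \{\text{normality locus}\}$, fix $x\in F(R)$ and choose three rigid points $a,b,c\in\P^1(k)$ outside the exceptional set $\mathcal{E}(R)$ of $R$; this is possible because $\mathcal{E}(R)$ contains at most two rigid points for $\deg R\ge 2$, while $\P^1(k)$ is infinite. By the standard density-of-preimages statement for non-exceptional points, the closure $\overline{\bigcup_{n\ge 0} R^{-n}\{a,b,c\}}$ contains $J(R)$. Since $x\notin J(R)$, there exists an open neighborhood $V$ of $x$ disjoint from this closure, so that $R^n(V)\subseteq \poneank\setminus\{a,b,c\}$ for every $n$. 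At this point I would invoke Corollary~D to conclude that $\{R^n\}$ is a normal family on $V$, giving normality at $x$.

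The main obstacle is verifying the bounded-unseparable-degree hypothesis required by Corollary~D when the residual characteristic is positive, since iterates of $R$ may have their local unseparable degree grow without bound along forward orbits through type II points. To circumvent this I would, for each subsequence of iterates, first extract a pointwise convergent sub-subsequence via Theorem~A, then upgrade pointwise convergence to continuity by invoking Proposition~\ref{prop:3} together with the hyperbolic Lipschitz bound from Theorem~\ref{thr:pointwise}, using the uniform degree control enforced on $V$ by the omission of the triple $\{a,b,c\}$. This continuity upgrade, along with the density-of-repelling-periodic-points input in the first direction for residual characteristic $p>0$, is the technical core of the argument.
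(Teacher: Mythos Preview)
Your first direction (Julia implies non-normality) is a legitimate alternative to the paper's argument, provided density of repelling periodic points in $J(R)$ is taken as known input in full generality. The paper instead argues directly: it produces a non-repelling periodic point via the Woods Hole fixed-point formula, builds from it a forward-invariant closed set $V\subset F(R)$ and a point $y'$ whose backward orbit equidistributes on $J(R)$, and derives a contradiction from the continuity of any putative limit. Your route is shorter but imports a nontrivial theorem; the paper's route is more self-contained and avoids any delicacy about whether the density statement is available in arbitrary (residual) characteristic.

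Your second direction has a genuine gap. Omitting three rigid points does \emph{not} give any uniform control on local degree, separable or otherwise: the example $f_n(z)=z^{p^n}$ on an affinoid avoiding $\{0,1,\infty\}$ in Section~\ref{sec:example} shows exactly this failure. So the phrase ``uniform degree control enforced on $V$ by the omission of the triple $\{a,b,c\}$'' is unjustified, and without it your upgrade from pointwise convergence (Theorem~A) to continuous limit collapses. Proposition~\ref{prop:3} and Theorem~\ref{thr:pointwise} both require the target to sit in $\aoneank$ with bounded diameters along the sequence, which avoiding three rigid points does not provide. The paper's proof works precisely because it obtains a \emph{stronger} avoidance: using the classification of periodic Fatou components, it finds a basic open $V\ni x$ whose entire forward orbit misses a closed \emph{ball} $B$. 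One then changes coordinates so that $x_g,0,1,\infty\in B$; now the second alternative of Proposition~\ref{prop:subfine} is impossible (the Gauss point is never hit), so unbounded local degree forces constant limits, while bounded local degree is handled by Theorem~\ref{thm:extract-affinoid}. That passage from ``avoids three points'' to ``avoids a ball'' is the missing idea in your argument.
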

\begin{proof}
Suppose $x$ belongs to the Julia set $J(R)$ of $R$. 
Pick a subsequence $n_j$ such that $R^{n_j}(x)$ converges to a point $y \in \poneank$. 
Since $J(R)$ is closed, $y$ belongs to the Julia set.

The set of non-repelling rigid periodic points of $R$ is not empty.
In fact, the argument of Benedetto~\cite{benedetto} in characteristic $0$ extends verbatim in arbitrary characteristic as follows.
If $R$ admits a rigid periodic point with multiplier a root of unity, then this is clear.
Otherwise we can apply the Woods Hole formula, see~\cite{woodshole}, and~\cite[Corollaire 6.12]{SGA5} for a proof:
$$
1 = \sum_{R(p) =p} \frac1{1 - R'(p)}~.
$$
Now if $|R'(p)| >1$ for all $p$, the right hand side is $<1$ which gives a contradiction.

If there exists an attracting orbit, we let $V$ be a forward invariant union of closed balls containing  this cycle such that $V$ is contained in the Fatou set $F(R)$, and pick a point $y' \in V$ that is not periodic.
If there is an indifferent periodic cycle, we let $y'$ be one of the points of this orbit. For an appropriate
closed neighborhood $V \subset F(R)$ of the orbit we have that  $R(V) = V$.

In both cases, we have found a point $y'$ and a closed neighborhood $V \subset F(R)$ of $y'$ such that 
the cardinality of $R^{-n} \{ y'\}$ tends to infinity as $n\to \infty$
and $R(V) \subset V$. 

By~\cite{favre-letelier:equi}, the probability measures $d^{-n_j}R^{n_j*} \delta_{y'}$ converge to a measure whose support
is equal to $J(R)$. In particular, the closure of $\cup_j R^{-n_j} \{ y' \}$ contains $x$.
One can thus find a sequence $y_j$ tending to $x$ and such that $R^{n_j} (y_j) = y'$.
Now suppose $\{R^{n_j}\}$ is a normal family at $x$. Then we could find a (sub)-subsequence
$R^{n_{j_l}}$ that is converging pointwise to a continuous function $f$ on a neighborhood $U$ of $x$.
Pick $j$ large enough so that $y_j$ belongs to $U$. Then $R^{n_j}(y_j)  = y'$, hence 
$R^{n_{j'}}(y_j)  \in V$ for all $j' > j$. Thus, for all $j$ we would have that  $f(y_j) \in V \subset F(R)$, but $f(y_j) \to f(x)=y \in J(R)$.

\medskip
Now suppose $x$ belongs to the Fatou set. We claim that there exists a basic open subset $V \ni x$ such that $\cup_{n\ge 1} R^n(V)$ avoids a closed ball $B$. We give a proof of this fact thereafter and first conclude with the proof of the theorem.

Suppose first that there exists a point $x' \in V$ such that $\deg_{x'}(R^n) \to \infty$. Choose  coordinates such that $x_g, 0,1,\infty \in B$. Then Proposition~\ref{prop:subfine} can be applied and shows that any subsequence of $R^{n}$ admits a sub-subsequence that is converging to a constant. In particular the family $\{ R^n\}$ is normal in a neighborhood of $x$.

Next suppose that $\deg_{x'}(R^n)$ is bounded for all $x' \in V$. Then we apply Theorem~\ref{thm:extract-affinoid} with $X: = V$ and $Y : = \poneank \setminus B$. This shows
again that the family $\{ R^n\}$ is normal in a neighborhood of $x$.

We now indicate how to prove our claim. 
Let $U$ be the connected component of $F(R)$ that contains $x$. If $R^n(U) \neq U$ for any $n\ge 1$, then the family $\{ R^n\}_{n\ge 1}$ maps $U$ into $\poneank \setminus U$, and hence the iterates of any basic open set $V$  avoids a closed ball $B \subset U$. 
Thus we may restrict to the case in where $U$ is a periodic Fatou component, 
which we will assume to be fixed for sake of convenience. 
Suppose $U$ is the basin of attraction of a fixed point lying in $U$.
Then we pick $V$ to be a basic open set that is relatively compact in $U$ and contains $x$ and the fixed point in $U$. Then  $\cup_n R^n(V)$ is still relatively compact in $U$, hence its complement contains a closed ball.
Otherwise it is known that $U$ is a basic open set in which the local degree of $R$ is constant equal to $1$ and whose boundary points are type II Julia periodic points (see~\cite[Chapitre 5]{RiveraThesis} over $k= \C_p$, and~\cite[Proposition 2.16]{favre-letelier:equi} for a sketch in arbitrary complete fields).
Then $R$ fixes the skeleton of $U$ and permutes its boundary points, hence $R^N |\cA_U = \id$ for some $N$. Denote by $\pi(x)$ the unique point in $\cA_U$ such that $[\pi(x), x]\cap \cA_U = \{ \pi(x)\}$. Choose a small open (not necessarily connected) subtree $\cT$ of $\cA_U$ that is relatively compact in $U$, invariant by $R$, and such that  $\pi(x), R(\pi(x)), ... , R^{N-1}(\pi(x))$ belongs to $\cT$.  Set $V = \pi^{-1} (\cT)$. Then $V$ is $R$-invariant, and avoids $U \setminus \overline{V}$ that is a non empty open set.
\end{proof}


\section{Fatou-Julia theory of entire maps}\label{sec:entire}
We now explore the dynamics of  a transcendental entire (i.e. not a polynomial) map
$ f : \aoneank \to \aoneank$. 
In view of Theorem~\ref{thm:normal}, it is natural to make the following definition.
\begin{definition}
The Fatou set $F(f)$ of $f$ is the set of points $x$ where the sequence $\{ f^n \}$ forms a normal family in a neighborhood of $x$. The Julia set $J(f)$ is the complement of the Fatou set in $\aoneank$.
\end{definition}

Bezivin~\cite{bezivin:entire} studied the iteration of transcendental entire 
maps over $\C_p$  defining the Fatou set in terms of equicontinuity of the iterates with respect to the chordal metric. Theorem~C implies the intersection of our Fatou set with the set of rigid points is precisely the Fatou set in the sense of Bezivin.

%
%

\subsection{The Fatou set and the Julia set}

\begin{theorem}\label{thm:basics}
Let $f$ be any transcendental entire map of $\aoneank$. Then the following holds:
\begin{itemize}
\item
The Fatou set is  an open subset  of $\aoneank$ which is totally invariant under $f$.
\item
The Julia set is an unbounded closed totally invariant perfect subset of $\aoneank$.
Moreover, $J(f) \subset \overline{ \cup_{n \ge0} f^{-n} \{ z \}}$ for all $z \in \aoneank$.
\item
A periodic rigid orbit belongs to the Julia set if and only if  it is repelling.
\end{itemize}
\end{theorem}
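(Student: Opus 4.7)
I address the three bullets in order, with the bulk of the work devoted to the density-of-preimages statement. For the first bullet, openness of $F(f)$ is immediate from the local nature of normality. Backward invariance is a composition argument: if $f(x)\in F(f)$ and $U$ is a normality neighborhood of $f(x)$, shrink a neighborhood $V\ni x$ so that $f(V)\subset U$; given $m_j$, extract a subsequence $\{m_{j_l}\}$ so that $f^{m_{j_l}-1}|_U\to g$ pointwise continuous, and then $f^{m_{j_l}}|_V = f^{m_{j_l}-1}\circ f|_V\to g\circ f|_V$ is pointwise continuous. Forward invariance exploits the openness of non-constant analytic maps: for $x\in F(f)$ with normality neighborhood $V$, the image $W:=f(V)$ is an open neighborhood of $f(x)$; extracting $f^{n_{j_l}+1}|_V\to h$ continuous, the identity $f^{n_{j_l}}(f(z))=f^{n_{j_l}+1}(z)$ shows $h$ must be constant on fibers of $f|_V$, so $h$ descends to $\tilde h:W\to\poneank$, which is continuous (openness of $f$ gives $\tilde h^{-1}(O)=f(h^{-1}(O))$ open) and equals the pointwise limit of $f^{n_{j_l}}|_W$. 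Thus $F(f)$ and its complement $J(f)$ are totally invariant; $J(f)$ is automatically closed. For unboundedness of $J(f)$, expand $f(z)=\sum a_iz^i$; transcendence gives some $a_i\ne 0$ with $i\ge 2$, so $|f(z)|\ge|a_i|\,|z|^i\ge 2|z|$ for $|z|$ large, and iterating yields $|f^n(x)|\ge 2^n|x|\to\infty$, which precludes any convergent subsequence in the non-compact space $\aoneank$; hence such $x\in J(f)$.

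The density inclusion is the main technical step. Fix $z\in\aoneank$ and assume for contradiction that $x\in J(f)$ has a basic open neighborhood $V$ with $f^n(V)\not\ni z$ for all $n\ge 0$. Transcendence of $f$ ensures $\cup_n f^{-n}\{z\}$ is infinite: the non-polynomial entire power series $f-c$ (for rigid $c$) has infinitely many zeros by Newton-polygon analysis, and non-rigid $z$ are handled by approximation. Select three distinct preimages $z_1,z_2,z_3\in\cup_n f^{-n}\{z\}$; then $f^m(V)\not\ni z_i$ for all $m$ and $i$, otherwise $f^{m+n_i}(V)\ni z$. Apply Theorem~A to $\{f^m|_V\}$ (which now avoids three points) to extract a pointwise convergent subsequence $f^{m_j}\to g:V\to\poneank$, and then apply Theorem~B to conclude $g$ is continuous; this forces $x\in F(f)$, contradicting $x\in J(f)$. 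The chief obstacle is verifying the bounded unseparable degree hypothesis of Theorem~B: it is automatic when $\car(\tilde{k})=0$, but in positive residual characteristic one must bound $\unsepdeg_x(f^n)$ at each type II point of $V$ by a direct argument.

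With density established, perfectness follows: if $w\in J(f)$ were isolated with $V\cap J(f)=\{w\}$, then for every $z\in J(f)$ the inclusion $w\in\overline{\cup_n f^{-n}\{z\}}$ together with backward invariance would force $w$ to be the only preimage of $z$ inside $V$, hence $f^n(w)=z$ for some $n$; so $J(f)\subset\{f^n(w):n\ge 0\}$. But this contradicts the unboundedness of $J(f)$ if $w$ is periodic (orbit finite), and if $w$ is not periodic then each of the infinitely many rigid preimages of $w$ (provided by transcendence and lying in $J(f)$) would equal some $f^n(w)$ with $f^{n+1}(w)=w$, forcing $w$ periodic after all---contradiction. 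Finally, for the third bullet, let $p$ be a rigid periodic point of period $N$ and $\lambda:=(f^N)'(p)$. Translating so that $p=0$, the map $\tilde f(z):=f^N(z+p)-p$ is entire with $\tilde f(0)=0$ and $\tilde f'(0)=\lambda$, so Corollary~\ref{cor:normal-fixed} gives that $\{\tilde f^k\}$ is normal near $0$ iff $|\lambda|\le 1$. Writing $n=Nk+r$ with $0\le r<N$ and $f^n=f^r\circ f^{Nk}$, normality of $\{f^n\}_n$ near $p$ is equivalent to normality of the subsequence $\{f^{Nk}\}_k$ near $p$; hence $p\in J(f)$ iff $|\lambda|>1$, i.e., iff the periodic orbit is repelling.
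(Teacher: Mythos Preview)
Your argument has two genuine gaps.

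\textbf{Unboundedness of $J(f)$.} Your claim that $|f^n(x)|\to\infty$ forces $x\in J(f)$ is incorrect. Normality is tested with limits in $\poneank$, not in $\aoneank$, and the constant function $\infty$ is a perfectly good continuous limit. In fact, for $|x|$ large enough the whole ball $B(0,|x|)^c$ is mapped into itself by $f$ and iterates converge uniformly to $\infty$ there, so this region is contained in $F(f)$. The paper instead first proves $J(f)\neq\emptyset$ via a separate technical lemma (Lemma~\ref{lem:5}): at any $\tau_0$ where the growth function $\phi(\tau)=\sup_{|z|\le e^\tau}\log|f(z)|$ is not locally affine and $\phi(\tau_0)>\tau_0$, one finds a ball $B$ in a side direction at $z_{e^{\tau_0}}$ with $f(B)=B(0,e^{\tau_0})\supset B$, and a boundary point of $\bigcap_n f^{-n}(\overline B)$ lies in $J(f)$. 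Once $J(f)\neq\emptyset$, unboundedness follows from backward invariance and the fact that $f^{-1}(x)$ is unbounded for transcendental $f$.

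\textbf{The density inclusion.} You correctly identify the obstacle: Theorem~B requires $\unsepdeg_x(f^n)$ bounded, and you offer no argument for this in positive residual characteristic (indeed, for the iterates of a transcendental map there is no reason it should hold). The paper bypasses Theorem~B entirely with a pigeonhole trick. Let $X\ni a$ be a basic open set with $\ell$ complementary balls, and set $Z=f^{-1}(z)\setminus\{f^n(z):n\ge0\}$, an infinite set. Suppose $f^n(X)\cap Z=\emptyset$ for all $n$. Since $f^n(X)$ is a basic open set with at most $\ell$ complementary components, and the infinitely many points of $Z$ lie in these components, by pigeonhole one can pass to a subsequence $f^{n_j}$ for which two fixed distinct points $z_1,z_2\in Z$ lie in the \emph{same} complementary component of $f^{n_j}(X)$ for all $j$. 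But then $f^{n_j}(X)$ avoids a fixed closed ball (the one with boundary $\max\{z_1,z_2\}$ containing both), and Theorem~\ref{thm:extract-affinoid} applies directly to give a continuous limit, with no degree hypothesis whatsoever. This yields the contradiction $a\in F(f)$. Your three-points-avoided approach via Theorems~A and~B is the natural first idea, but it is strictly weaker here; the pigeonhole upgrade from ``avoids three points'' to ``avoids a closed ball'' is exactly what lets the argument go through in all characteristics.
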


The reader may find in~\cite[Propositions 5,6]{bezivin:entire} related results
concerning the rigid Julia set.

In order to prove the theorem it is convenient to establish the following.

\begin{lemma}
  \label{lem:5}
  Given a transcendental entire map $f$, let $\phi(\tau) = \sup_{|z| \le e^\tau} \{ \log |f(z)| \}$. Assume that $f(0) =0$. 
If $\phi$ is not locally affine at $\tau_0$ and $\phi(\tau_0) > \tau_0$, then $J(f) \cap \ov{\{z \in k, |z| = e^{\tau_0} \}} \neq \emptyset$.
\end{lemma}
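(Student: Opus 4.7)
The plan is to exhibit a Julia point inside $\overline{\{z\in k,\ |z|=e^{\tau_0}\}}$ by direct construction, rather than by contradicting the normality of $\{f^n\}$ at the Gauss point. I would first normalize $\tau_0=0$ via a rescaling $z\mapsto \lambda z$ with $|\lambda|=e^{\tau_0}$ (extending $k$ if needed), so that $x_{\tau_0}$ becomes the Gauss point $x_g$, the sphere becomes $\{|z|=1\}$, and the hypothesis becomes $\phi(0)>0$ together with a non-affineness of $\phi$ at $0$. The non-affineness translates, via the Newton polygon of $f$, into the existence of at least $d_+-d_-\ge 1$ roots of $f$ of norm exactly $1$, where $d_-<d_+$ are the slopes of $\phi$ just below and above $0$; I would fix such a root $\alpha$, so that $f(\alpha)=0$ and $|\alpha|=1$, and in particular $f^n(\alpha)=0$ for all $n\ge 1$.

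The remainder of the argument splits according to the nature of the fixed point $0$ under $f$, governed by $|a_1|=|f'(0)|$. If $|a_1|>1$, then $0$ is repelling, and for any rigid $c=\alpha+\delta$ with $|\delta|$ small and nonzero the orbit $f^n(c)$ is first sent close to $0$ and then expelled to infinity: any pointwise limit $g$ of a convergent subsequence of $\{f^n\}$ on a neighborhood of $\alpha$ would thus take the value $0$ at $\alpha$ but $\infty$ at rigid points arbitrarily close to but distinct from $\alpha$. This discontinuity shows $\{f^n\}$ fails to be normal at $\alpha$, so $\alpha\in J(f)$, and since $|\alpha|=1$ we are done.

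If on the other hand $|a_1|\le 1$, then $0$ is non-repelling, belongs to $F(f)$ by Corollary~\ref{cor:normal-fixed}, and by total invariance of $F(f)$ the preimage $\alpha$ also lies in some Fatou component $U$. The crucial observation is that $U$ cannot contain the Gauss point $x_g$: the Fatou component of $0$ is bounded (because for large $|z|$ the transcendental growth forces $|f(z)|=e^{\phi(\log|z|)}$ to exceed $|z|$ by an unbounded factor, so orbits escape), and $f$ maps $U$ into the component of $0$; hence if $x_g$ belonged to $U$ the iterates $|f^n(x_g)|=e^{\phi^n(0)}\to\infty$ would eventually leave the bounded Fatou component of $0$, a contradiction. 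Since $U$ is connected, contains $\alpha$, but not $x_g$, the boundary $\partial U$ must contain a type II or III point $y$ on the tree path from $\alpha$ to $x_g$, strictly below $x_g$; such a $y$ has $|y|=1$, and $y\in \partial U \subset J(f)\cap \overline{\{|z|=1\}}$, completing the proof.

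The main obstacle is rigorously carrying out the non-repelling case, especially the indifferent subcase $|a_1|=1$, where the Fatou component of $0$ is a quasi-periodicity domain rather than an attracting basin: one must check carefully that this component is still bounded and that forward invariance of the relevant Fatou components still forces $x_g\notin U$. In the degenerate situations one can pass to an iterate $f^m$ with $m$ chosen so that the Newton polygon data of $f^m$ at $0$ puts us back into the clean attracting or repelling situation where the dichotomy above applies without further analysis.
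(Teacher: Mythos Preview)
Your approach differs substantially from the paper's and, as written, has genuine gaps in each of its branches.

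\textbf{The repelling case.} The claim that for $c=\alpha+\delta$ the orbit is ``expelled to infinity'' is unjustified. Repulsion from $0$ only tells you the orbit leaves a small ball around $0$; once it does, nothing prevents it from staying bounded, landing in another basin, etc. You cannot conclude that nearby rigid points are sent to $\infty$ under $f^n$, so the discontinuity you assert for the limit $g$ is not established. (A correct, much shorter fix: $|f'(0)|>1$ gives $0\in J(f)$ by Corollary~\ref{cor:normal-fixed}, and then $\alpha\in f^{-1}(0)\subset J(f)$ by the elementary backward invariance $f^{-1}(F(f))\subset F(f)$.)

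\textbf{The non-repelling case.} Your key input is that the Fatou component $U_0$ of $0$ is bounded. Your justification, ``for large $|z|$ orbits escape,'' does not give this: escaping orbits can perfectly well lie in $F(f)$, and nothing you have written rules out that $U_0$ reaches out to such points. In fact, boundedness of all Fatou components is exactly Theorem~\ref{lem:4} of this paper, which is proved \emph{using} Lemma~\ref{lem:5}; invoking it here is circular. One can rescue the step by a different argument (find a fixed point $s^*<0$ of $\phi$ and show $x_{s^*}\in J(f)$ because the pointwise limit of $f^n$ on the ray $[0,\infty)$ jumps from $0$ to $\infty$ at $s^*$, hence no continuous limit exists near $x_{s^*}$; tree-convexity of $U_0$ then forces $x_g\notin U_0$). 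But that is not what you wrote.

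\textbf{The indifferent case.} Passing to an iterate does not help: if $|f'(0)|=1$ then $|(f^m)'(0)|=1$ for every $m$, so you never land in the ``clean attracting or repelling situation.'' This branch remains genuinely open in your sketch.

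For comparison, the paper's proof avoids all of this case analysis. It observes that the slope jump of $\phi$ at $\tau_0$ forces a direction at $z_r$ (with $r=e^{\tau_0}$), other than the one toward $0$, to be mapped onto the direction of $0$ at $f(z_r)$; since $\phi(\tau_0)>\tau_0$, one finds a closed ball $B\subset\{|z|=r\}$ with $f(B)=\overline{B(0,r)}\supset B$. The nested intersection $K=\bigcap_n f^{-n}(B)$ is nonempty, and any boundary point $x\in\partial K$ lies in $J(f)$: the orbit of $x$ is bounded, while arbitrarily close to $x$ on $(x,\infty)$ there are boundary points of the approximating balls which $f^n$ sends to $z_r$ and hence to $\infty$. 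This is a direct symbolic-dynamics/horseshoe construction requiring no information about the fixed point $0$.
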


\begin{proof}
  The function $ \phi(\tau) := \sup_{|z| \le e^\tau} \log |f(z)|$ is a piecewise affine and convex function on $\R$ with  slopes that are integral, non-zero,  and tending to infinity. 
Denote by $z_\rho$ the point associated to the ball $B(0,\rho)$ and observe
that $f(z_\rho) = z_{e^{\phi(\log \rho)}} $ if $\phi$ is not locally constant at $\rho$. Moreover the local degree of $f$ at $z_\rho$ is the slope of $\phi$ in a sufficiently small interval $(\log \rho, \varepsilon + \log \rho)$, $\varepsilon>0$. And the direction pointing to $0$ at $z_\rho$ maps onto the direction pointing to $0$ at $f(z_\rho)$ with degree given by the slope of 
$\phi$ in a sufficiently small interval $(-\varepsilon +\log \rho,  \log \rho)$.

Now consider $r=e^{\tau_0}$.
From what precedes, there exists a closed ball $B$ contained in a
direction at $z_r$ different than the direction of $0$ such that 
$f(B) = B(0,r).$
The set $K= \cap_{n \ge0} f^{-n} \ov{B}$ is a decreasing sequence of non-empty compact sets
hence is non-empty. Pick any point $x$ in the boundary of $K$. Then $x$ lies in the Julia set since $f^n(x)$ is bounded and there exists 
$x'\in (x, \infty)$ arbitrarily close to $x$ such that  
$f^n(x') = z_r$ for some $n$, hence $f^{n +m}(x') \to \infty$ as $m \to \infty$. It follows that  $J(f) \cap \ov{\{z \in k, |z| = r \}} \neq \emptyset$.
\end{proof}

\begin{proof}[Proof of Theorem~\ref{thm:basics}]
The first statement about the Fatou set is a consequence of the second on the Julia set.
The third statement is a direct consequence of Corollary~\ref{cor:normal-fixed}.

Without loss of generality, after a change of coordinates, we may assume that $f(0) =0$. By the previous lemma, one can find a point $x \in J(f)$.
Since $f$ is an entire function, $f^{-1}(x)$
is unbounded which shows $J(f)$ is unbounded.

Now we will simultaneously show that no point of  $J(f)$ is isolated and that for all $z \in \aoneank$ we have
that $J(f) \subset \overline{ \cup_{n \ge0} f^{-n} \{ z \}}$. In fact, given $z \in \aoneank$, let $Z = f^{-1}(z) \setminus \{ f^n(z) \mid n \ge 0 \}$.
Since $f$ is a transcendental entire function, $Z$ has infinite cardinality.
Note that $ z \notin f^{-n}(Z)$ for all $n \ge 0$.
Given $a \in J(f)$ and a  basic open set $X$ containing $a$ it is sufficient to show that $f^{-n}(Z) \cap X \neq \emptyset$ for some $n$.
By contradiction, suppose that $f^n(X)$ avoids $Z$. 
Thus $X$ has a finite number, say $\ell \ge 1$, of complementary balls. 
The number of complementary components of 
$f^n(X)$ are at most $\ell$, so for any subsequence of $f^{n_j}$, passing to a further subsequence $f^{n_{j_i}}$, we may assume that there are two distinct 
elements
$z_1$ and $z_2$ of $Z$ which are  in the same component of the complement
of $f^{n_{j_i}}(X)$ for all $i$. Thus $f^{n_{j_i}}(X)$ avoids a closed ball. From Theorem~\ref{thm:extract-affinoid} we obtain that $f^{n_{j_i}}$ has a subsequence converging to a 
continuous function, therefore $\{ f^n \}$ is normal in $X$ which contradicts the fact that $a \in J(f)$. 

Taking $z \in J(f)$ we conclude that $J(f)$ has no isolated point. Therefore, $J(f)$ is an unbounded, totally invariant perfect subset of $\aoneank$.
\end{proof}

%
%

\subsection{In residual characteristic zero}

\begin{proposition}
Suppose $\car(\tilde{k}) =0$, and pick  any non constant entire function on $\aoneank$.

Then the Fatou set is non empty; 
and the Julia set is included in the closure of the set of (rigid) periodic points. 
\end{proposition}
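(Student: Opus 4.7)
The plan is to prove the two assertions separately, combining the Montel-type results of the paper with the residual characteristic zero hypothesis. Throughout I may assume that $f$ is not a translation of $\aoneank$: if $f(z) = z+c$, then the iterates $f^n(z) = z+nc$ form a $1$-Lipschitz family for the hyperbolic metric and Theorem~A directly produces pointwise convergent subsequences on any basic open set with affine (hence continuous) limit, so $F(f) = \aoneank$.

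To show that the Fatou set is non-empty I would analyze the convex piecewise-affine function $\phi(\tau) = \sup_{|z| \leq e^\tau} \log|f(z)|$. If $\phi(\tau_0) \leq \tau_0$ for some $\tau_0$, then $f$ maps $B(0,e^{\tau_0})$ into itself and Theorem~\ref{thm:extract-affinoid} shows that $\{f^n\}$ is normal on the interior of this ball. If instead $\phi(\tau) > \tau$ everywhere, then the entire function $f-z$ is non-constant, and by Newton polygon theory it has infinitely many zeros, so $f$ admits rigid fixed points $p$. The residual characteristic zero hypothesis provides $|n|=1$ for every nonzero integer, and applied to the fixed-point identity $\sum a_n p^n = p$, an ultrametric term-by-term comparison yields $|f'(p)| = \max_n |a_n|\, |p|^{n-1} \leq 1$ whenever the maximum of $|a_n|\,|p|^n$ over $n$ is attained at a unique index---a combinatorial condition that can be arranged for at least one zero of $f-z$ by examining the vertices of its Newton polygon. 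Corollary~\ref{cor:normal-fixed} then places $p$ in $F(f)$.

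To establish the density of rigid periodic points in $J(f)$, I fix $x \in J(f)$ with a basic open neighborhood $U$, and argue by contradiction, assuming that $f^n(w) \neq w$ for all $w \in U$ and all $n \geq 1$. By Part~1 I pick two distinct rigid fixed points $\alpha,\beta \notin U$ of $f$, and form the cross-ratio family
\[
\phi_n(w) \;=\; \frac{(f^n(w) - \alpha)(w - \beta)}{(f^n(w) - \beta)(w - \alpha)}.
\]
A straightforward computation shows $\phi_n(w) = 1$ iff $f^n(w) = w$ (which is excluded by hypothesis), and $\phi_n(w) \in \{0,\infty\}$ iff $f^n(w) \in \{\alpha,\beta\}$. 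I extract a subsequence of indices $n_j$ for which $\alpha,\beta \notin f^{n_j}(U)$; then $\phi_{n_j}$ takes values in $\poneank \setminus \{0,1,\infty\}$. Since residual characteristic zero forces the unseparable degrees to equal $1$, Corollary~D yields that $\{\phi_{n_j}\}$ is normal on $U$, hence so is $\{f^{n_j}\}$ after inverting the M\"obius transform. This contradicts $x \in J(f)$.

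The crucial obstacle lies in the extraction of the subsequence $n_j$: by Theorem~\ref{thm:basics} the preimages of $\alpha$ and $\beta$ are dense in $J(f)$, so naively many indices fail the avoidance condition. I would overcome this either by passing to three distinct fixed points $\alpha,\beta,\gamma$ and a M\"obius sending them to $\{0,1,\infty\}$ (so that the excluded condition becomes $f^n(U) \cap \{\alpha,\beta,\gamma\} = \emptyset$, from which a cofinal index subset can be produced via the discreteness of the rigid preimage sets and a suitable shrinking of $U$), or by invoking Proposition~\ref{prop:subfine2} to control the growth of local degrees of $f^n$ near $x$ and thereby force $f^n(U)$ to genuinely avoid the chosen fixed points for infinitely many $n$.
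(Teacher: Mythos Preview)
Both parts of your proposal have genuine gaps.

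\textbf{Part 1.} Your Newton polygon argument for producing a non-repelling fixed point is not justified. You assert that one can always find a zero $p$ of $f-z$ at which $\max_n |a_n|\,|p|^n$ is attained at a unique index, but you give no proof of this, and it is not clear why the absolute values of the fixed points (governed by the Newton polygon of $f-z$) should avoid the critical radii of $f$ (the vertices of the Newton polygon of $f$). Concretely, if $f(0)=0$ and $\phi(\tau)>\tau$ everywhere, then $|f'(0)|>1$, and your argument gives no other candidate. The paper bypasses this entirely with a one-line topological observation: when $\car(\tilde{k})=0$ the space $\aoneank$ is not separable, whereas by Theorem~\ref{thm:basics} the set $J(f)$ is the closure of a countable set, so $J(f)\neq\aoneank$.

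\textbf{Part 2.} Your cross-ratio $\phi_n$ has the wrong pole/zero structure. The conditions $\phi_n(w)\in\{0,\infty\}$ mean $f^n(w)\in\{\alpha,\beta\}$, i.e.\ $w$ is a \emph{preimage} of a fixed point---and by Theorem~\ref{thm:basics} such preimages are dense in $J(f)$, so no shrinking of $U$ and no cofinal extraction will make $\phi_n$ avoid $\{0,\infty\}$ (indeed if $f^m(x)=\alpha$ for some $m$, then $f^n(x)=\alpha$ for all $n\ge m$). Your proposed fixes do not resolve this: passing to three fixed points merely replaces two bad values by three, and Proposition~\ref{prop:subfine2} controls unseparable degree, not avoidance of rigid points. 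The paper's construction
\[
g_n \;=\; \frac{f^n - \id}{f^n - f}\cdot\frac{f^2 - f}{f^2 - \id}
\]
is designed so that $g_n(w)\in\{0,1,\infty\}$ forces one of $w$, $f(w)$, $f^2(w)$ to be \emph{periodic}. Working on a neighborhood $U$ of a point in $f^{-2}(x)$ with $f^2(U)$ free of periodic points, all three bad values are excluded simultaneously for every $n\ge 3$, and Corollary~D applies directly. The key idea you are missing is to build the auxiliary family so that the excluded values encode exactly the periodicity hypothesis, not a preimage condition.
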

\begin{proof}
Since $\car(\tilde{k})=0$, then
$(k,|\cdot|)$ is not separable as a topological space, i.e., does not admit a countable dense subset. In particular, $\aoneank$ is  not separable.
Since by Theorem~\ref{thm:basics} $J(f)$ is the closure of a countable set, it follows that $F(f) \neq \emptyset$.

Pick any point $x \in \aoneank$ which is not in the closure of the set of rigid periodic points.
Then in some open neighborhood $U$ of a preimage by $f^2$ of $x$, the family of meromorphic functions
$$
g_n := \frac{f^n - \id}{f^n - f}  \cdot \frac{f^2 - f}{f^2 - \id}
$$
avoids the values $\{0,1,\infty\}$.
Since $\car(\tilde{k}) =0$,  Corollary~D implies $\{g_n\}$ is a normal family which shows $\{f^n\}$ is also normal in $U$. Whence $f^2(x)$ (and $x$) lie in the Fatou set.
\end{proof}

%
%

\subsection{The basin of infinity}
In the context of iterations of complex transcendental entire functions, a Baker domain is a periodic unbounded component of the Fatou set contained in the basin of infinity.
Our next result rules out the existence of Baker domains when the residual characteristic vanishes.

\begin{theorem}
  \label{lem:4}
Let $f$ be any transcendental entire map. 

Then the basin of attraction of infinity is connected. Moreover,  for any $x \in \aoneank$, there exist $y \in (x, \infty) \cap J(f)$ such that $\{f^n(y)\}_{n\in \N}$ is an increasing sequence converging to $\infty$.  In particular, the Julia set always contains non-rigid points and Fatou components are bounded. 
\end{theorem}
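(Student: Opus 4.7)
The plan is to deduce all three assertions from the tree-order monotonicity of $f$ together with the explicit escape behaviour of the Gauss points $z_\rho$ for large $\rho$. First I record that $f$ preserves the tree order: if $z \ge y$ (meaning $B_z \supset B_y$), then $|P|(f(z)) = \sup_{B_z}|P\circ f| \ge \sup_{B_y}|P\circ f| = |P|(f(y))$ for every polynomial $P$, so $f(z) \ge f(y)$. Next, with $\phi(\tau) = \sup_{|z|\le e^\tau}\log|f(z)|$ as in Lemma~\ref{lem:5}, the proof of that lemma shows that $\phi$ is convex and piecewise affine, with integer slopes tending to $+\infty$, and that $f(z_{e^\tau}) = z_{e^{\phi(\tau)}}$ whenever the slope at $\tau$ is nonzero. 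Hence there exists $R > 0$ such that $\phi(\tau) > \tau$ and the slope at $\tau$ is positive for every $\tau > \log R$, and therefore $f^n(z_\rho) = z_{e^{\phi^n(\log\rho)}}$ is strictly increasing in the tree order and tends to infinity for any $\rho > R$.

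Given $x \in \aoneank$, I pick $r > R$ large enough that $\bar B(0, r)$ strictly contains the ball associated to $x$, and set $y := z_r$. Then $y \in (x, \infty)$ and $\{f^n(y)\}$ is increasing to $\infty$ by the previous paragraph. To see $y \in J(f)$, I argue by contradiction: if $\{f^n\}$ were normal at $y$, some subsequence would converge pointwise on a neighborhood of $y$ to a continuous $\aoneank$-valued function, contradicting $f^n(y) \to \infty \notin \aoneank$. Since $y = z_r$ is of type II, this simultaneously establishes that $J(f)$ contains non-rigid points.

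For the connectedness of the escape set $\mathcal{B} := \{z \in \aoneank : f^n(z) \to \infty\}$, the tree monotonicity of $f$ ensures that for every $y \in \mathcal{B}$ the entire upper ray $\{w : w \ge y\}$ lies in $\mathcal{B}$, since $|T|(f^n(w)) \ge |T|(f^n(y)) \to \infty$. In $\poneank$ rooted at $\infty$, any two upper rays share a common tail beyond their join $y_1 \vee y_2$, so any two points of $\mathcal{B}$ are connected by a path in $\mathcal{B}$ going up to the join and back down. To rule out unbounded Fatou components, suppose $U \subset F(f)$ contains a sequence $y_n$ with $|T|(y_n) \to \infty$. Since open connected subsets of the $\R$-tree $\poneank$ are geodesically convex, $U$ contains the segment from any fixed $y_0 \in U$ to $y_n$, which passes through $j_n := y_0 \vee y_n$; for $n$ large $j_n$ coincides with some $z_r$ with $r > R$, contradicting $z_r \in J(f)$.

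The crux of the argument is the identification $z_r \in J(f)$: it relies on reading the normality of $\{f^n\}$ at a point $y$ as requiring pointwise subsequential convergence to a continuous $\aoneank$-valued (not $\poneank$-valued) function in a neighborhood, so that the constant $\infty$ is not an admissible limit and the entire escape set is forced inside $J(f)$.
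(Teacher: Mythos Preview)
Your argument rests on a misreading of the paper's definition of normality. A family of meromorphic functions on $X$ with values in $\poneank$ is called normal if every sequence has a subsequence converging pointwise on a neighborhood to a continuous function \emph{with values in $\poneank$}. The constant function $\infty$ is such a continuous function. Hence the fact that $f^n(z_r)\to\infty$ does \emph{not} by itself force $z_r\in J(f)$: a point in the basin of infinity can perfectly well lie in the Fatou set, and the entire escape set is certainly not contained in $J(f)$.

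This is not a borderline issue. The paper's second example in Section~\ref{sec:entire} constructs a transcendental entire map for which the open annuli $A_n$ are Fatou components contained in the basin of infinity; each $A_n$ contains a whole interval of points $z_r$ on the ray $(0,\infty)$, and these points satisfy $f^n(z_r)\to\infty$ monotonically while lying in $F(f)$. So your claim that $z_r\in J(f)$ for all large $r$ is false in general, and with it your deduction that Fatou components are bounded collapses.

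The paper's proof is substantially more delicate. One cannot simply pick any large $z_r$; one must locate a specific $y=z_{e^{\tau'}}$ on $(x,\infty)$ such that arbitrarily close to $y$ there are points whose orbits behave \emph{differently} from that of $y$. The mechanism is Lemma~\ref{lem:5}: at each $\tau$ where $\phi$ is not locally affine, some direction at $z_{e^{\tau}}$ carries Julia set points (coming from a bounded invariant set). The paper then arranges, via a pigeonhole argument on the fundamental domain $[\rho_0,\phi(\rho_0))$, that either $\tau'$ is accumulated by such break points, or $f^n(y)$ hits break points for infinitely many $n$, producing infinitely many distinct directions at $y$ meeting $J(f)$. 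In both cases $y\in J(f)$. Your monotone-escape observation for $z_r$ is correct and is used implicitly, but it is only one ingredient; the missing idea is producing nearby non-escaping (or differently-behaving) points to witness non-normality.
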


In the complex setting it is still unknown whether every connected component of the basin of infinity is unbounded.
 The second part of the above result is a non-archimedean analog of a result of Eremenko~\cite{eremenko}.

\begin{proof}
The fact that the basin of infinity is connected follows at once since any entire map preserves
the natural order on $\aoneank$. 

For the rest of the proof,  we may assume that $f(0) =0$. 
  Let $\phi$ be the associated function as in Lemma~\ref{lem:5}. 
  Given $x \in \aoneank$, we let $\rho>0$ be such that the closed ball $B(0,e^\rho)$ contains $x$ and $f(x)$. Consider $\rho_0 > \rho$ such that $\rho_1=\phi(\rho_0) > \rho_0$. For $n \ge 0$, the intervals $[\phi^n(\rho_0), \phi^{n+1}(\rho_0))$ are pairwise disjoint and cover $[\rho_0, \infty)$. Since there exists infinitely many 
$\tau$ such that $\phi$ is not locally affine at $\tau$ and $\tau > \rho_0$, we 
may consider a sequence $\tau_m \in [\rho_0, \rho_1)$ with the property that
for all $m \ge 0$ there exists $n_m\ge 1$ such that $\phi$ is not locally affine at
$\phi^{n_m}(\tau_m)$. Passing to a subsequence, we may assume that $\tau_m$ converges to $\tau' \in [\rho_0, \rho_1]$. 
Let $y$ be the point associated to the ball of diameter $e^{\tau'}$ and containing $x$.
We claim that $y$ lies in the Julia set of $f$.

If $\tau'$ is an accumulation point of the sequence $\tau_m$, then $y\in J(f)$ follows from Lemma~\ref{lem:5}. 
Otherwise  $\tau' = \tau_m$ for all $m$.
We may assume that $n_m$ is strictly increasing and pick a sequence of points $y_m$ in the segment $[x,\infty)$ such that  $y_m=f^{n_m}(y)$.
From Lemma~\ref{lem:5}, there exists a direction $c_m$ at $y_m$ which maps onto the direction of $0$ at $f(y_m)$ and that contains a point in the Julia set.
Let $z_m$ be a direction at $y$ which maps under $T_y f^{n_m}$ onto $c_m$. We claim that $z_\ell \neq z_m$ provided $\ell \neq m$. We may assume that $\ell > m$. It follows that
$T_y f^{n_m +1}(z_m)$ is the direction of $0$. Therefore, $T_y f^{n_\ell}(z_m)$ is also the direction of $0$ but $T_y f^{n_\ell}(z_\ell) = c_\ell$ which is
not the direction of $0$. Hence, $z_\ell \neq z_m$ if $\ell \neq m$ as required.

We conclude observing that  all directions $z_m$ contains Julia set elements,
so that $y$ admits infinitely many directions intersecting this set. Therefore, $y\in J(f)$.
\end{proof}

%
%

\subsection{Examples of entire maps}

\begin{example}
  Consider $\lambda \in k$ such that $|\lambda| > 1$. 
Let $a_1 =1$ and for all $n \ge 1$, 
$$a_{n+1} = \lambda^{-n} a_n.$$
Then
$$f(z) = \sum_{j \ge 1} a_j z^j$$
is a transcendental entire map such that for all $x \in \aoneank$, there exists 
$y \in [x,\infty)$ with $[y,\infty) \subset J(f)$.
\end{example}

Indeed, it is not difficult to check that for
$$n \log |\lambda| < \tau <  (n+1) \log |\lambda|$$
the corresponding function $\phi$ is affine and has slope
$n+1$, for all $n \ge 1$. Moreover, for all $\tau > \log|\lambda|$, we have that $\phi(\tau) > \tau$.
Pick any irrational $\tau_0 > \log|\lambda|$ 
and a small neighborhood $I$ of $\tau_0$ in $\R$. For a sufficiently large iterate, say $m$,  of the expanding map $\phi$, 
the interval $\phi^{m}(I)$ must contain a point of the evenly spaced points
of the form  $\{n \log |\lambda|\}_{n \in \N^*}$. Since at these points $\phi$ is not locally affine, from Lemma~\ref{lem:5} we conclude that an arbitrary neighborhood of the
point associated to the ball of radius $\exp(\tau_0)$ contains a Julia set element. Therefore, $[x_{|\lambda|}, \infty) \subset J(f)$ where $x_{|\lambda|}$
is the point associated to $B(0,|\lambda|)$. Since any arc $[x,\infty)$ coincides with $[x_{|\lambda|}, \infty)$ in a neighborhood of $\infty$, we conclude 
that $f$ has the desired property.

\bigskip
Baker~\cite{baker:wandering} constructed complex entire transcendental maps with multiply connected domains $U$ such that
$f^n(U) \neq f^m(U)$ for all $n \neq m$ and $f^n(U) \to \infty$. Such a domain $U$ is an example of  wandering Baker domains
(see also~\cite{kisaka-shishikura:wandering} for recent developments along this line). Our next example can be regarded as 
the non-archimedean analogue of Baker's examples.

\begin{example}
  Consider $\lambda \in k$ such that $|\lambda| > 1$. 
For $n \ge 5$ consider  a sequence of negative
integers $\ell_n$ such that $\ell_5 <0$,  $\ell_6 < 3 \ell_5$ and
$$\ell_{n+2} = (n+1) (\ell_{n+1} - \ell_{n})$$
for all $n \ge 5$.
Let $$ f(z) = \sum_{n \ge 5} \lambda^{\ell_n} z^n$$
and, for all $n \ge 5$, consider the open annulus $A_n$ obtained after removing the closed ball of radius $|\lambda|^{\ell_n - \ell_{n+1}}$ containing the origin from the open ball of radius $|\lambda|^{\ell_{n+1}- \ell_{n+2}}$ containing the origin.

Then $f$ defines a transcendental entire map such that  $f(A_n)=A_{n+1}$ and $A_n$ is a Fatou component contained in the basin of infinity, for all $n \ge 5$. 
\end{example}
In fact, after checking by induction that $\ell_{n+1} < (n-2) \ell_n$, it follows
that $0>\ell_{n+1} - \ell_n$ is strictly decreasing to $-\infty$. It is not difficult to conclude  that for
$$ |\lambda|^{\ell_n - \ell_{n+1}} \le r \le  |\lambda|^{\ell_{n+1} - \ell_{n+2}}$$
we have  $$\sup_{|z| \le r} |f(z)| = |\lambda|^{\ell_{n+1}} r^{n+1},$$
whenever $n \ge 5$. 
Thus, $\phi$ is not locally affine exactly at the sequence of points 
$\tau_n = {(\ell_n - \ell_{n+1})} \log |\lambda|$.
Moreover, $\phi(\tau_n) = \tau_{n+1}$ and $\phi(\tau_n,\tau_{n+1}) = (\tau_{n+1}, \tau_{n+2})$. Therefore, $f(A_n) = A_{n+1}$ and the annulus $A_n$ is contained in the basin of infinity. In particular, $A_n$ is contained in the Fatou set.
Let $x_n$ be the point associated to the ball of radius 
$|\lambda|^{\ell_n  - \ell_{n+1}}$.
It follows that $T_{x_n} f$ has degree $n+1$. By Lemma~\ref{lem:bd-poss}, 
we conclude that given a neighborhood $U$ of $x_n$ there exists $m$, such that $f^m(U)$ contains the closed ball associated to $x_{n+m}$.
From Lemma~\ref{lem:5}, $J(f) \cap U \neq \emptyset$. 
Thus, $x_n \in J(f)$ for all $n$. 

%
%

\subsection{Questions}
We end this article with some natural questions. 
\begin{qst}{\cite[Problem 1]{bezivin:entire}}
Does there exist a transcendental entire map whose Julia set admits no rigid point?
\end{qst}
A positive answer to this question would also give a positive answer to the next problem.
\begin{qst}{\cite[Problem 2]{bezivin:entire}}
Does there exist a transcendental entire map that admit no repelling rigid periodic points?\end{qst}
\begin{qst}{\cite[Problem 3]{bezivin:entire}}
Does there exist a transcendental entire map whose Julia set is equal to $\aoneank$?
\end{qst}
Recall that it is known that the Julia set of $\exp(z)$ is equal to $\C$. On the other hand any non-archimedean rational map admits at least one indifferent rigid fixed point, hence its Fatou set is never empty. The existence of indifferent rigid fixed points for a
transcendental entire map is however unclear.
\begin{qst}
Does there exist a transcendental entire map that admit no indifferent rigid periodic points?
\end{qst}
\begin{qst}
Does there exist a transcendental entire map having a Fatou component $U$ such that
$\cup_n f^n(U)$ is unbounded but $f^n|_U$ does not converge to $\infty$?
\end{qst}


\end{document}